\shorttitle{Estimation of spinal-structured trees} 
\newcommand*\bigcdot{\mathpalette\bigcdot@{.5}}
\newcommand*\bigcdot@[2]{\mathbin{\vcenter{\hbox{\scalebox{#2}{$\m@th#1\bullet$}}}}}
\newcommand{\child}[1]{\mathcal{C}(#1)}
\newcommand{\pmax}{N}
\newcommand{\card}[1]{\##1}
\DeclareMathOperator*{\argmax}{arg\,max}
\newcommand{\ee}{\varepsilon}
\newcommand{\KL}[2]{d_{KL}\left(#1,#2\right)}
\newcommand{\dist}{\mu}
\newcommand{\spine}{\mathcal{S}}
\newcommand{\tree}{T}
\newcommand{\hatmu}{\widehat{\mu}_{h}}
\renewcommand{\S}[1]{S_{#1}}
\newcommand{\measure}{\mathcal{M}}
\newcommand{\bah}[2]{d_{B}\left(#1,#2\right)}
\newcommand{\spineset}[1]{\mathfrak{S}_{#1}}
\newcommand{\estS}[1]{\widehat{\mathcal{S}}_{#1}}
\newcommand{\B}[1]{\mathcal{B}#1}
\newcommand{\norm}[1]{\left\|#1\right\|}
\newcommand{\ball}[2]{B_{1}(#1,#2)}
\newcommand{\m}[1]{#1_{-}}
\newcommand{\mean}[1]{m(#1)}
\newcommand{\x}[1]{{\bf x}^{(#1)}}
\newcommand{\hatf}{\widehat{f}_{h}}
\newcommand{\hatspine}{\widehat{\mathcal{S}}_{h}}
\newcommand{\AH}[2]{\mathfrak{D}\left(#1,#2\right)}
\newcommand{\s}{\mathbf{s}}
\newcommand{\lag}{\mathbf{L}}
\newcommand{\lopt}{\Gamma}
\newcommand{\solopt}{\mathbf{S}}
\newcommand{\auxfonc}{K}
\newcommand{\feas}{\mathbf{F}}
\newcounter{compteleslignes}
\DeclareRobustCommand\paramalpha{\alpha} 
\DeclareRobustCommand\paramepsilon{\ee} 
\DeclareRobustCommand\numligne{} 
\newcounter{Optim}
\newenvironment{OptimEquation}
   {\stepcounter{Optim}%
     \addtocounter{equation}{-1}%
     \equation}
   {\endequation}
\begin{document}

\title{Maximum likelihood estimation\\for spinal-structured trees}

\authorone[Inria team MOSAIC]{Romain Aza\"{i}s}
\authortwo[IMT Nord Europe]{Beno\^it Henry} 

\addressone{\'ENS de Lyon, 46 All\'ee d'Italie, 69\,007 Lyon, France}
\emailone{romain.azais@inria.fr}

\addresstwo{Institut Mines-T\'el\'ecom, Univ.\ Lille, 59\,000 Lille, France}
\emailtwo{benoit.henry@imt-nord-europe.fr}

\begin{abstract}
	We investigate some aspects of the problem of the estimation of birth distributions (BD) in multi-type Galton-Watson trees (MGW) with unobserved types. More precisely, we consider two-type MGW called spinal-structured trees. This kind of tree is characterized by a spine of special individuals whose BD $\nu$ is different from the other individuals in the tree (called normal whose BD is denoted $\mu$). In this work, we show that even in such a very structured two-type population, our ability to distinguish the two types and estimate $\mu$ and $\nu$ is constrained by a trade-off between the growth-rate of the population and the similarity of $\mu$ and $\nu$. Indeed, if the growth-rate is too large, large deviations events are likely to be observed in the sampling of the normal individuals preventing us to distinguish them from special ones. 
	Roughly speaking, our approach succeeds if $r<\mathfrak{D}(\mu,\nu)$ where $r$ is the exponential growth-rate of the population and $\mathfrak{D}$ is a divergence measuring the dissimilarity between $\mu$ and $\nu$.
\end{abstract}

\keywords{multi-type Galton-Watson tree; branching process; parametric inference; latent variable}

\ams{60J80; 62M05}{62F12}

\newpage

\section{Introduction} \label{s:intro}

\subsection{Problem formulation}

A Galton-Watson tree with birth distribution $\mu$ is a random tree obtained recursively as follows: starting from the root, the number of children of any node is generated independently according to $\mu$. In light of \cite{bhat_adke_1981}, the log-likelihood of such a tree $T$ observed until generation $h$ is given by
$$\mathcal{L}^{\text{\tiny{GW}}}_h(\mu) = \sum_{v\in T,\,\mathcal{D}(v)<h} \log\,\mu(\#\mathcal{C}(v)),$$
where $\mathcal{D}(v)$ denotes the depth of node $v$, i.e. the length of the path from the root to $v$, and $\mathcal{C}(v)$ stands for the set of children of $v$. When the mean value $\mean{\mu}$ of the birth distribution $\mu$ is smaller than $1$, the model is said to be subcritical and the number of vertices of $T$ as well as its expectation are finite, i.e. the genealogy associated to $T$ becomes extinct. If a subcritical Galton-Watson model is conditioned to survive until (at least) generation $h$, the structure of the induced trees is changed according to Kesten's theorem \cite{abraham2014,kesten1986subdiffusive}. Indeed, the conditional distribution converges, when $h$ tends to infinity, towards the distribution of Kesten's tree defined as follows. Kesten's tree is a two-type Galton-Watson tree in which nodes can be normal or special such that:
\begin{itemize}
\item The birth distribution of normal nodes is $\mu$, while the one of special nodes $\nu$ is biased as,
\begin{equation}\label{eq:bias:kesten}
\forall\,k\geq0,~\nu(k) = \frac{k \mu(k)}{\mean{\mu}}.
\end{equation}
As for Galton-Watson trees, the numbers of children are generated independently.
\item All the children of a normal node are normal. Exactly one child of the children of a special node (picked at random) is special.
\end{itemize}
It should be noted that the set of children of a special node is non-empty since $\nu(0)=0$. Consequently, Kesten's tree consists in an infinite spine composed of special nodes, to which subcritical Galton-Watson trees of normal nodes (with birth distribution $\mu$) are attached. Following the reasoning presented in \cite{bhat_adke_1981} together with the form of the special birth distribution \eqref{eq:bias:kesten}, the log-likelihood of Kesten's tree is given by
\begin{eqnarray}
\mathcal{L}^{\text{\tiny{K}}}_h(\mu) &=& \sum_{v\notin\mathcal{S},\,\mathcal{D}(v)<h} \log\,\mu(\#\mathcal{C}(v)) + \sum_{v\in\mathcal{S},\,\mathcal{D}(v)<h} \log\,\nu(\#\mathcal{C}(v)) \nonumber \\
&=& \sum_{\mathcal{D}(v)<h} \log\,\mu(\#\mathcal{C}(v)) + \sum_{v\in\mathcal{S},\,\mathcal{D}(v)<h} \log\,\#\mathcal{C}(v) - h \log\,m(\mu) ,\nonumber
\end{eqnarray}
where $\mathcal{S}$ denotes the spine of $T$, i.e. the set of special nodes. Interestingly, maximizing the log-likelihood (with respect to $\mu$) does not require to observe the types of the nodes. Indeed, the term that involves the spine does not depend on the parameter of the model.

In this paper, we investigate spinal-structured trees which can be seen as a generalization of Kesten's tree. A spinal-structured tree is a two-type Galton-Watson tree, made of normal and special nodes, parameterized by a distribution $\mu$ and a non-trivial function $f:\mathbb{N}\to\mathbb{R}_+$, such that:
\begin{itemize}
\item The birth distribution of normal nodes is $\mu$, while the one of special nodes $\nu$ is biased as,
\begin{equation}\label{eq:biased:distrib}
\forall\,k\in\mathbb{N},~\nu(k) = \frac{f(k) \mu(k)}{\sum_{l\geq0} f(l) \mu(l)} ,
\end{equation}
assuming that the denominator is positive.
\item As for Kesten's tree, a normal node gives birth to normal nodes, while, if the set of children of a special node is non-empty, then exactly one of them (picked at random) is special.
\end{itemize}
Whenever $f(0)=0$, a spinal-structured tree admits an infinite spine made of special nodes, which gives its name to the model. It should be remarked that the model fails to be identifiable because the line spanned by $f$ defines the same probability measure $\nu$. As a consequence, without loss of generality, we assume
$$ \sum_{l\geq0} f(l)\mu(l) = 1.$$
Taking this into account, the log-likelihood of spinal-structured trees is given by
$$
\mathcal{L}_h^{\text{\tiny{SST}}} (\mu,f) = \sum_{\mathcal{D}(v)<h} \log\,\mu(\#\mathcal{C}(v)) + \sum_{v\in\mathcal{S},\,\mathcal{D}(v)<h} \log\,f(\#\mathcal{C}(v)).
$$

Whatever the birth distribution $\mu$, any biased distribution $\nu$ can be written as \eqref{eq:biased:distrib} with a wise choice of $f$ (except of course distributions $\nu$ such that, for some $k$, $\nu(k)>0$ and $\mu(k)=0$). The parameterization $(\mu,f)$ instead of $(\mu,\nu)$ makes clearer that spinal-structured trees form a generalization of Kesten's tree, which is obtained if and only if $f$ is linear, considering that $\mu$ is subcritical. In addition, Galton-Watson trees can be seen as spinal-structured trees assuming that $f$ is constant. Our goal in this work is to investigate the problem of estimating $\mu$ and $f$  through the maximization of $\mathcal{L}_h^{\text{\tiny{SST}}}$ without the knowledge of the types of the individuals. The main advantage of the parameterization $(\mu,f)$ is that it allows, as for Kesten's tree, to maximize the log-likelihood with respect to $\mu$ without observing the types of the nodes. However, maximizing it with respect to $f$ entails the observation of the types of the nodes.

\subsection{Motivation}

The motivation for this paper is twofold: first, it provides a step forward in the theoretical understanding of type identification in multi-type Galton Watson trees (MGW); second, it offers preliminary theoretical foundations for statistically testing whether population data have been conditioned to survive or not. These two points are detailed below.

	Spinal-structured trees can be seen as particular instances of two more general models. If the special individuals are interpreted as immigrants, the underlying population process is a Galton-Watson model with immigration given by $\nu$. And more generally, as every Galton-Watson process with immigration, it can also be seen as a particular case of MGW. The problem of the estimation of birth distributions in MGW has been heavily studied, e.g. \cite{MG1,MG2,MG4,MG3} and references therein, but in all these works the types of the individuals are assumed to be known. Very few works \cite{NM1,NM3,NM2} investigate this problem with unobserved types but none of these provide theoretical results: they only investigate numerical aspects. Using the special case of spinal-structured trees, this paper aims to demonstrate the theoretical difficulties involved in type estimation and propose a statistical strategy for dealing with them. In particular, we shall show that we are able to estimate the underlying parameters when population growth is not too large compared with the dissimilarity of the two birth distributions. This phenomenon is likely to hold true for more complicated problems.

When estimating the parameters of an observed population using a stochastic model, the latter must first be accurately chosen. To the best of our knowledge, even in the simple framework of Galton-Watson models, there is no method in the literature for rigorously determining from the data whether the population has been conditioned to survive or not. However, as aforementioned, estimating the parameters under the wrong model introduces significant biases that can lead to wrong conclusions about the population. Spinal-structured trees generalize both Galton-Watson trees ($f$ is constant, not investigated in the sequel of the paper) and Galton-Watson trees conditioned to survive ($f\propto\text{Id}$). By estimating $f$, and better by testing the shape of $f$, we can conclude which model to apply. The results of this paper will allow us to make progress in this direction (see in particular Subsection~\ref{ss:test}).

\subsection{Aim of the paper}

The present paper is dedicated to the development and the study of an estimation method for the unknown parameters $\mu$ and $f$, as well as the unknown type of the nodes, from the observation $T_h$ of one spinal-structured tree until generation $h$. The estimation algorithm that we derive in the sequel is based on the maximization of $\mathcal{L}_h^{\text{\tiny{SST}}}$ with the major difficulty that types are unobserved. Once the calculations are done, it can be succinctly described as follows.
\begin{enumerate}
\item Naive estimation of $\mu$:
$$
\widehat{\mu}_h(i) = \cfrac{1}{\card{T}_{h}}\sum_{v\in\tree_h}\mathbbm{1}_{\child{v}=i} .
$$
\item Estimation of the spine, i.e. the set of special nodes:
$$ \hatspine=\argmax_{\mathbf{s}\in\spineset{h}}\KL{\overline{\mathbf{s}}}{\B{\hatmu}},$$
where $\spineset{h}$ denotes the set of spine candidates (branches still alive at generation $h$), $\overline{\mathbf{s}}$ is the empirical distribution of the number of children along the spine candidate $\mathbf{s}$, $\B{\hatmu}(i)\propto i\,\hatmu(i)$, and $\KL{p}{q}$ denotes the Kullback-Leibler divergence between distributions $p$ and $q$.
\item Unbiased estimation of $\mu$ (without estimated special nodes $\hatspine$):
$$\hatmu^\star(i)=\cfrac{1}{\card{T}_{h}-h}\sum_{v\in \tree_h\setminus\hatspine}\mathbbm{1}_{\child{v}=i}.$$
\item Estimation of $f$:
$$\hatf(i)=\cfrac{1}{\hatmu^\star(i)h}\sum_{v\in\hatspine}\mathbbm{1}_{\child{v}=i}.$$
\end{enumerate}

Even in such a structured instance of MGW, the convergence of these estimates is far from easy to be established. We state in Theorem~\ref{thm:mainThm} that, if the distribution of surviving normal nodes is not too close to the special birth distribution $\nu$ compared to the exponential growth-rate of the tree, then $\widehat{\mu}^\star_h$ and $\widehat{f}_h$ almost surely converge towards $\mu$ and $f$. In addition, the recovered part of the spine is almost surely of order $h$ when $h$ goes to infinity. We insist on the fact that these two results are true whatever the growth-regime of the tree (subcritical $m(\mu)<1$, critical $m(\mu)=1$, or supercritical $m(\mu)>1$). Nevertheless, the reason behind these convergence results is not the same in the subcritical regime (where almost all the spine can be recovered in an algorithmic fashion) and in the critical and supercritical regimes (where the number of spine candidates explodes). The theoretical convergence properties related to the asymptotics of the estimators $\widehat{\mu}^\star_h$, $\widehat{f}_h$, and $\widehat{\mathcal{S}}_h$ are shown under the following main conditions, that are essential to the proofs of convergence:
\begin{itemize}
\item The maximum number of children in the tree is $N\geq1$, i.e. $\mu\in\measure$ where $\measure$ denotes the set of probability distributions on $\{0,\dots,N\}$. By construction \eqref{eq:biased:distrib}, $\nu$ also belongs to $\measure$.
\item $f(0)=0$ so that $\nu(0)=0$, i.e. the tree admits an infinite spine of special nodes.
\end{itemize}
For the sake of readability and conciseness of the proofs, we also assume:
\begin{itemize}
\item The support of $\mu$ is $\{0,\dots,N\}$.
\item $f(k)>0$ for any $k>0$, which implies that the support of $\nu$ is $\{1,\dots,N\}$.
\end{itemize}

The article is organized as follows. Section \ref{s:algo:spine} describes how some parts of the spine can be algorithmically recovered in a deterministic fashion. Section \ref{sec:MLS} is devoted to our estimation procedure and theoretical results:
\begin{itemize}
\item  Subsection \ref{ss:1stmu} for the preliminary estimation of $\mu$;
\item Subsection \ref{ss:ugly} for the identification of a candidate for the spine, named the Ugly Duckling;
\item Subsection \ref{ss:2stmu} for the final estimation of $\mu$ and the estimation of $f$;
\item Subsection \ref{ss:mainthm} for the statement of our main result: Theorem \ref{thm:mainThm}.
 \end{itemize}
The proof of Theorem \ref{thm:mainThm} in the subcritical case is done in Section \ref{s:subproof}. The proof in the supercritical case involves large deviation-type estimates for which we need information on the rate function. The rate function is studied in Section \ref{s:LDP} and the information needed stated in Theorem \ref{thm:optim}.
We finally consider the proof in the critical and supercritical cases in Section \ref{sec:mainproof}.
The final Section \ref{s:num} is devoted to numerical illustrations of the results (Subsection~\ref{ss:illus}) and an application to asymptotic tests for populations conditioned on surviving (Subsection~\ref{ss:test}). Appendix \ref{app:lem} concerns the proof of some intermediate lemmas.


\section{Algorithmic identification of the spine} \label{s:algo:spine}

Here we propose an algorithm to (at least partially) identify the spine of a spinal-structured tree $T$ observed until generation $h$. A node $v$ of $T$ is called observed if $\mathcal{D}(v)<h$. It means that the number of children of $v$ can be considered as part of the data available to reconstruct the spine of $T$ (even if the depth of these children is $h$). The tree restricted to the observed nodes is denoted $T_h$.

We will also need the notion of observed height of a subtree $T[v]$ of $T$. If $v$ is a node of $T$, $T[v]$ denotes the tree rooted at $v$ and composed of $v$ and all its descendants in $T$. In the literature on trees, the height $\mathcal{H}(T[v])$ of a subtree $T[v]$ is the length of the longest path from its root $v$ to its leaves. In the context of this work, one only observes $T$ until generation $h$ and thus the height of $T[v]$ can be unknown since the leaves of $T[v]$ can be unaccessible. For this reason, we define the observed height of $T[v]$ as
\begin{equation}\label{eq:def:H0}
	\mathcal{H}_o(T[v]) = \min( \mathcal{H}(T[v])~,~l-\mathcal{D}(v)),
\end{equation}
where $l$ is the length of the minimal path from $v$ to unobserved nodes. It should be remarked that $\mathcal{H}_o$ implicitly depends on $h$. Either $l=+\infty$ if $v$ has no unobserved descendant, or $l = h-\mathcal{D}(v)$. In addition, $v$ has no unobserved descendant if and only if $\mathcal{H}(T[v]) + \mathcal{D}(v) <h$.

The following result makes possible to partially identify the spine.
\begin{prop}\label{prop:id:spine}Let $T$ be a spinal-structured tree observed until generation $h$ and $v$ an observed node of $T$.
\begin{itemize}
\item If $\mathcal{H}_o(T[v])+\mathcal{D}(v) < h$, then $v$ is normal.
\item If $v$ is special, the children of $v$ are observed, and
$$\exists!\,c\in\mathcal{C}(v)~\text{such that}~\mathcal{H}_o(T[c])+\mathcal{D}(c)\geq h ,$$
then $c$ is special.
\end{itemize}
\end{prop}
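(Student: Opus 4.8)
The plan is to unwind the definitions of the spine structure and of the observed height $\mathcal{H}_o$, and to exploit the fact that a special node always has at least one special child (since $\nu(0)=0$, equivalently $f(0)=0$), and that special nodes form an infinite spine.

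For the first bullet, I would argue by contraposition: suppose $v$ is \emph{not} normal, i.e.\ $v$ is special. Since $f(0)=0$, the subtree $T[v]$ contains an infinite path of special nodes starting at $v$ (the spine restricted to $T[v]$). In particular $T[v]$ has infinitely many vertices, so it must contain a vertex at depth $h$; hence $v$ has an unobserved descendant, which forces $l = h-\mathcal{D}(v)$ in the definition \eqref{eq:def:H0}, and therefore $\mathcal{H}_o(T[v]) = \min(\mathcal{H}(T[v]), h-\mathcal{D}(v))$. Since $\mathcal{H}(T[v])=+\infty$ (the infinite spine), we get $\mathcal{H}_o(T[v]) = h-\mathcal{D}(v)$, i.e.\ $\mathcal{H}_o(T[v])+\mathcal{D}(v)=h \not< h$. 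This contradicts the hypothesis $\mathcal{H}_o(T[v])+\mathcal{D}(v)<h$, so $v$ must be normal.

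For the second bullet, assume $v$ is special and that its children are observed. Among the children of $v$, exactly one — call it $c^\star$ — is special by the construction of spinal-structured trees. As in the first part, $c^\star$ roots an infinite spine, so $\mathcal{H}(T[c^\star])=+\infty$ and $c^\star$ has an unobserved descendant; applying the computation above to $c^\star$ gives $\mathcal{H}_o(T[c^\star])+\mathcal{D}(c^\star)=h\geq h$. Conversely, every \emph{normal} child $c$ of $v$ roots a subtree of normal nodes only; I claim $\mathcal{H}_o(T[c])+\mathcal{D}(c)<h$ for such $c$. Indeed, if $T[c]$ has no unobserved descendant this is immediate from $\mathcal{H}_o = \mathcal{H}$ and the characterization "$v$ has no unobserved descendant iff $\mathcal{H}(T[v])+\mathcal{D}(v)<h$" recalled just before the proposition; and if $T[c]$ does have an unobserved descendant, then $\mathcal{H}_o(T[c])=\min(\mathcal{H}(T[c]),h-\mathcal{D}(c))$, and here the argument needs $\mathcal{H}(T[c])$ to be finite. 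But wait — a normal subtree need not be finite in the critical or supercritical regime, so in general a normal child could also satisfy $\mathcal{H}_o(T[c])+\mathcal{D}(c)\geq h$. Re-reading the statement, the hypothesis is precisely that there is a \emph{unique} such child $c$; so given that uniqueness, and since we have just shown the special child $c^\star$ is always one such child, the unique child satisfying the condition must \emph{be} $c^\star$, hence is special. Thus the proof of the second bullet is: show $c^\star$ satisfies the condition (done above), then invoke uniqueness to conclude $c=c^\star$.

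The only real subtlety — and the step I would be most careful about — is the interplay between "$T[v]$ infinite" and "$T[v]$ reaches depth $h$": one must check that an infinite rooted tree observed up to generation $h$ necessarily has a node at every depth $\le h$ (true because each node has finitely many children and the subtree is infinite, or more simply because an infinite spine passes through every generation), so that $l=h-\mathcal{D}(v)<+\infty$ in \eqref{eq:def:H0} and the $\min$ is attained at $h-\mathcal{D}(v)$. Everything else is bookkeeping with the definition of $\mathcal{H}_o$ and the two structural facts about spines (exactly one special child when children exist; $\nu(0)=0$ so a special node always has children).
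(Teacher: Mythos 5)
Your proof is correct and follows essentially the same route as the paper's: special nodes root an infinite spine, hence their subtree reaches generation $h$ and satisfies $\mathcal{H}_o(T[v])+\mathcal{D}(v)=h$, which gives the first bullet by contraposition; and for the second bullet the special child always satisfies the survival condition, so the uniqueness hypothesis forces the identified child to be the special one. Your explicit remark that normal children may also survive to generation $h$ (so that uniqueness is genuinely needed) is exactly the point the paper's terser wording relies on.
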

\begin{proof}
The proof relies on the fact that special nodes have an infinite number of descendants. First, if $v$ is such that $\mathcal{H}_o(T[v])+\mathcal{D}(v) < h$, it means that its subtree has become extinct before generation $h$ and thus $v$ is normal. Second, if $v$ is special, exactly one of its children is special. All the subtrees rooted at the children of $v$ that become extinct are composed of normal nodes. Consequently, if only one subtree among its children has not become extinct before generation $h$, it is necessarily special.
\end{proof}

\noindent
It should be noticed that if an observed node is not covered by the two previous conditions, it can be either special or normal. Indeed, if the $c_k$'s are the children of a special node $v$ that do not become extinct before generation $h$, only one of them is special, while the other ones are normal. Actually, only the distribution of the subtrees rooted at the $c_k$'s can be used to differentiate them. An application of Proposition~\ref{prop:id:spine} is presented in Fig.\,\ref{fig:ex:spine}.

If a node $v$ at depth $\mathcal{D}(v) = h-1$ has been identified as special, i.e. if $v$ is the only node with children at depth $h-1$, it means that the spine has been algorithmically reconstructed, and is formed by $v$ and all its ascendants. Otherwise, if the type of two or more nodes at depth $h-1$ has not been identified, each of them is part of a possible spine. More formally, the set of possible spines $\mathfrak{S}_h$ is made of all the branches from the root to $v$ whenever $\mathcal{D}(v) = h-1$ and the type of $v$ has not been identified as normal. With this notation, if $\#\mathfrak{S}_h=1$, then the spine has been fully reconstructed. In all cases, $\bigcap_{\mathbf{s}\in\mathfrak{S}_h} \mathbf{s}$ is exactly the set of nodes identified as special, while the complement $\bigcup_{\mathbf{s}\in\mathfrak{S}_h}\mathbf{s}\setminus \bigcap_{\mathbf{s}\in\mathfrak{S}_h}\mathbf{s}$ is composed of all the nodes that can not been identified in an algorithmic way.

Spine candidates can be indexed by their first unobserved node. Given a node $v$ in $T$, the sequence of ancestors of $v$ is denoted $\mathcal{A}(v)$,
\begin{equation}\label{eq:def:A}
	\mathcal{A}(v) = \left(\mathcal{P}^h(v) , \mathcal{P}^{h-1}(v), \dots , \mathcal{P}(v) \right),
\end{equation}
where $\mathcal{P}(v)$ is the parent of $v$ in $T$ and recursively $\mathcal{P}^h(v) = \mathcal{P}(\mathcal{P}^{h-1}(v))$. If $\mathcal{D}(v)=h$, then $\mathcal{A}(v)$ is an element of $\mathfrak{S}_h$. In the whole paper, we identify (when there is no ambiguity) $\mathcal{A}(v)$ with the sequence of numbers of children along $\mathcal{A}(v)$, i.e. $(\#\mathcal{C}(u)\,:\,u\in\mathcal{A}(v))$.

\begin{figure}[t]
\centering
\includegraphics[width=0.48\textwidth]{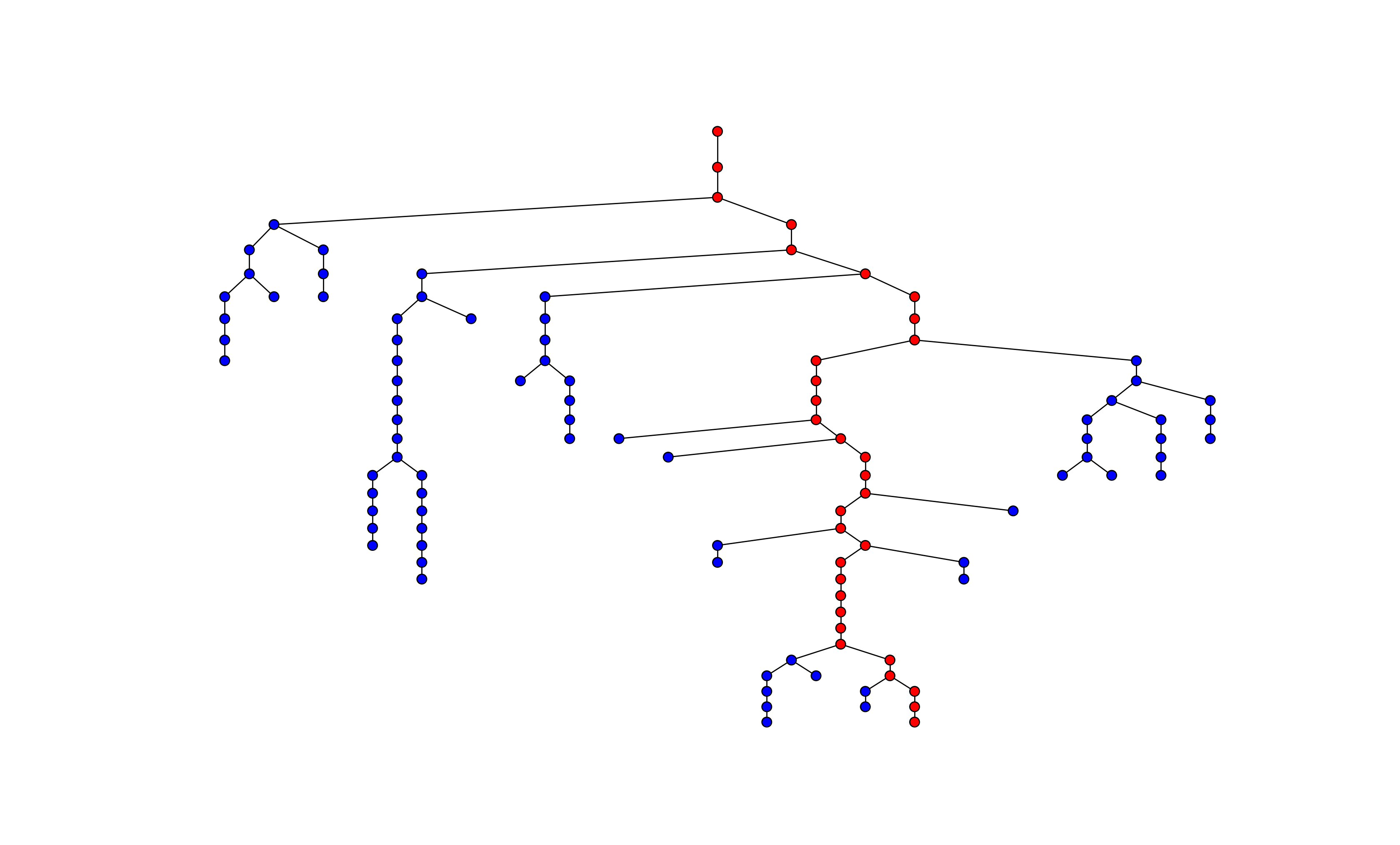}~
\includegraphics[width=0.48\textwidth]{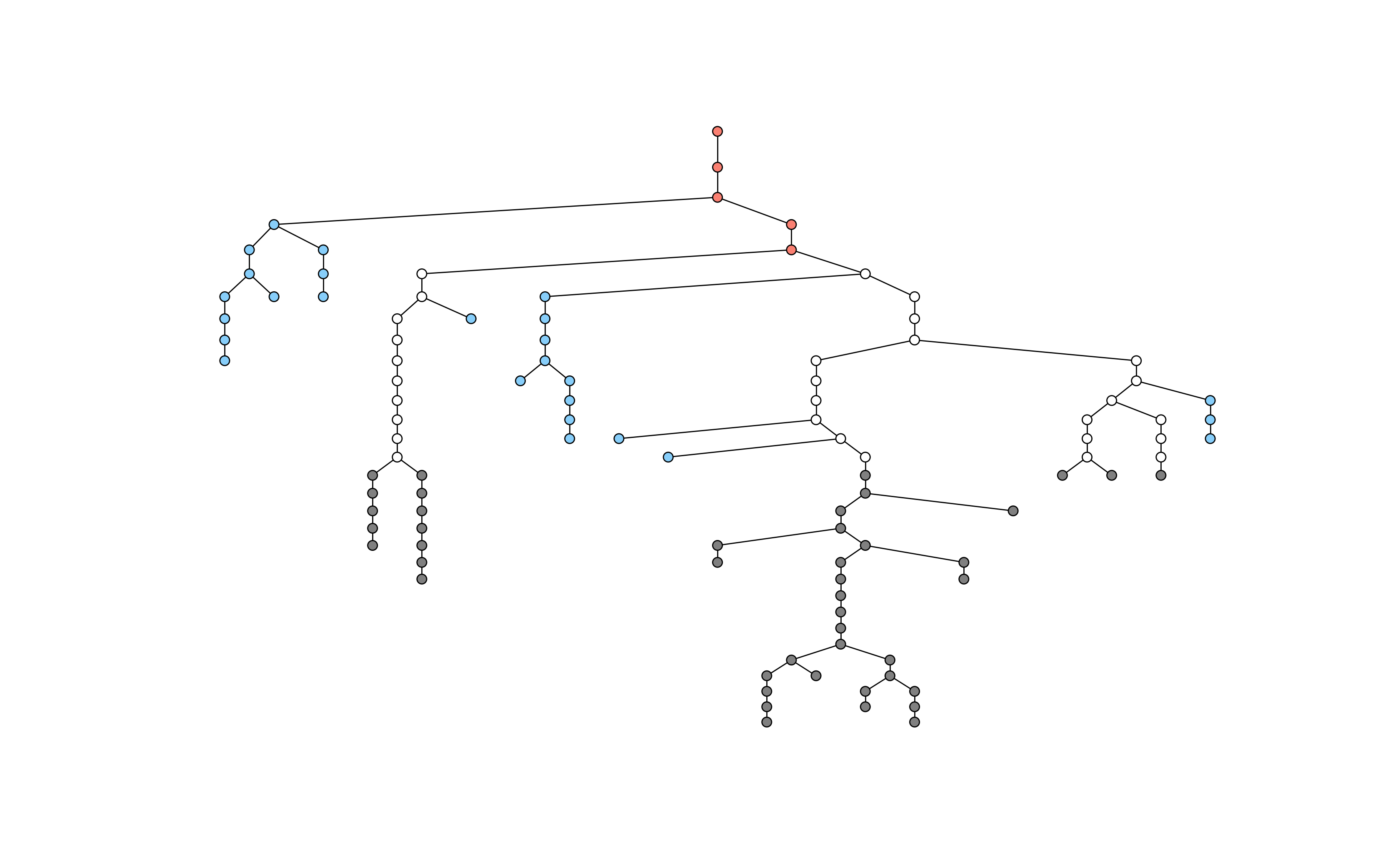}
\caption{A spinal-structured tree simulated until generation $30$ with normal nodes in blue and special nodes in red (left). We assume that it is observed until generation $h=15$ and identify the type of the nodes using Proposition~\ref{prop:id:spine} (right) with the following color code: light blue for identified normal nodes, light red for identified special nodes, gray for unobserved nodes, and white for unidentified types.}
\label{fig:ex:spine}
\end{figure}


\section{Ugly Duckling} \label{sec:MLS}

In this section, we aim to develop an estimation method for the unknown parameters $\mu$ and $f$ as well as the spine $\mathcal{S}$ of a spinal-structured tree observed until generation $h$. The algorithm presented below takes advantage of the specific behavior of spinal-structured trees. We also present our main result of convergence that holds whatever the growth regime of the normal population, i.e. whatever the value of $m(\mu)$.

\subsection{Estimation of $\mu$} \label{ss:1stmu}

As it was remarked in the introduction, maximizing $\mathcal{L}_h^{\text{\tiny{SST}}}$ with respect to $\mu$ does not require to observe the type of the nodes. Consequently, $f$ being unknown, we can still construct a first estimate of $\mu$ as
$$ \widehat{\mu}_h=\argmax_{\mu\in\measure}
\mathcal{L}_h^{\text{\tiny{SST}}} (\mu,f).$$
Standard calculus shows that
\begin{equation}
\widehat{\mu}_h = \left( \cfrac{1}{\card{T}_{h}}\sum_{v\in\tree_h}\mathbbm{1}_{\child{v}=i} \right)_{i\in\{0,\dots,N\}} .
\label{eq:estmu1}
\end{equation}
We can notice that the optimum in $f$ of $\mathcal{L}_h^{\text{\tiny{SST}}}$ depends on the unknown spine $\mathcal{S}$, and is thus of no use at this stage.

\subsection{Selection of the spine}
\label{ss:ugly}
In $\mathfrak{S}_h$, the spine $\mathcal{S}$ is the unique element whose component-wise distribution is $\nu$ defined from \eqref{eq:biased:distrib}. In that sense, finding $\mathcal{S}$ is a sample selection problem, where, however, the distribution at stake $\nu$ is unknown. Our approach consists in estimating the spine by the sample that differs the most from the expected behavior of a sample made of normal nodes.

However, it should be observed that $\spineset{h}$ consist of surviving lineages. Thus, $\mu$ \textit{is not} the component-wise distribution of the samples of normal nodes in $\spineset{h}$, and, as a consequence, is not the right distribution to be compared to. Identifying the law of $\s\in\spineset{h}$ can be done thanks to the so-called many-to-one formula presented in the following theorem (see \cite{lyons1995conceptual}).
\begin{thm}Let $G$ be a Galton-Watson tree with birth distribution $\mu$ and $h$ be an integer. Then, for any bounded measurable function $\varphi:\mathbb{R}^{h}\to\mathbb{R}$, we have
	\begin{equation}
		\label{eq:manytoone}
		\mathbb{E}\left[\sum_{\{u\in G\,:\,\mathcal{D}(u)=h\}}\varphi\left(\mathcal{A}(u)\right)\right]~=~\mean{\mu}^{h}\,\mathbb{E}\left[\varphi(X_{0},\ldots,X_{h-1})\right],
	\end{equation}
where $\mathcal{A}(u)$ is defined in \eqref{eq:def:A} and $X_{1},\ldots,X_{h-1}$ is an i.i.d.\ family of random variables with common distribution $\B{\mu}$, where the operator $\B{}$ is defined, for any $p\in\measure$, by
\begin{equation}
	\label{eq:biasOperator}
	\B{p}(i)=\cfrac{ip_{i}}{\mean{p}} ,\quad \forall\,i\in\{1,\ldots,\pmax\}.
\end{equation}
\end{thm}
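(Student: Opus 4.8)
The plan is to prove the many-to-one formula \eqref{eq:manytoone} by induction on $h$, tracking the joint law of the numbers of children along a depth-$h$ lineage. First I would note that the right-hand side involves the size-biased distribution $\B{\mu}$ from \eqref{eq:biasOperator}: the key elementary fact is that if $X$ has law $\mu$, then for any bounded $\psi$, $\mean{\mu}\,\mathbb{E}[\psi(\B{X})] = \mathbb{E}[X\,\psi(X)]$ in the appropriate sense, i.e.\ summing over a node's children and weighting by how many there are is exactly size-biasing that node's offspring count and multiplying by the mean. This observation is what will let me peel off one generation at a time.

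For the base case $h=1$, one has $\mathbb{E}\big[\sum_{u:\mathcal{D}(u)=1}\varphi(\#\mathcal{C}(\text{root}))\big] = \mathbb{E}[\,\#\mathcal{C}(\text{root})\,\varphi(\#\mathcal{C}(\text{root}))\,] = \sum_i i\,\mu(i)\,\varphi(i) = \mean{\mu}\sum_i \B{\mu}(i)\varphi(i) = \mean{\mu}\,\mathbb{E}[\varphi(X_0)]$, which is exactly \eqref{eq:manytoone} at $h=1$. For the inductive step, I would condition on the first generation: write the sum over $\{u:\mathcal{D}(u)=h\}$ as a sum over children $c$ of the root of the analogous sum over $\{u\in G[c]:\mathcal{D}_{G[c]}(u)=h-1\}$, where $\mathcal{A}(u)$ decomposes as $\#\mathcal{C}(\text{root})$ prepended to the ancestor sequence of $u$ inside the independent Galton-Watson subtree $G[c]$. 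Conditioning on $\#\mathcal{C}(\text{root})=k$, the subtrees $G[c_1],\dots,G[c_k]$ are i.i.d.\ copies of $G$, so the inner expectation is $k$ times $\mean{\mu}^{h-1}\mathbb{E}[\varphi(j,X_0,\dots,X_{h-2})]$ by the induction hypothesis applied with the function $(x_0,\dots,x_{h-2})\mapsto\varphi(j,x_0,\dots,x_{h-2})$ and $j=k$ the realized root offspring count. Taking expectation over $\#\mathcal{C}(\text{root})$ then produces $\sum_k k\,\mu(k)\,\mean{\mu}^{h-1}\mathbb{E}[\varphi(k,X_0,\dots,X_{h-2})] = \mean{\mu}^{h}\sum_k\B{\mu}(k)\mathbb{E}[\varphi(k,X_0,\dots,X_{h-2})] = \mean{\mu}^{h}\,\mathbb{E}[\varphi(X_0,X_1,\dots,X_{h-1})]$, since prepending an independent $\B{\mu}$-distributed $X_0$ to the i.i.d.\ block $(X_1,\dots,X_{h-1})$ (reindexed) has the claimed law. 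This closes the induction.

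The one point requiring a little care is the bookkeeping of the ancestor sequence $\mathcal{A}(u)$ in \eqref{eq:def:A}: it is ordered from the root downward, so the root's offspring count must land in the first coordinate, and one must check that the reindexing of the subtree's ancestor sequence is consistent with this convention; a clean way is to apply the induction hypothesis to $\tilde\varphi_k(x_0,\dots,x_{h-2}) := \varphi(k,x_0,\dots,x_{h-2})$, which is bounded and measurable whenever $\varphi$ is. A second mild point is measurability/integrability: boundedness of $\varphi$ and $N<\infty$ (all offspring counts bounded by $N$) make every expectation finite and justify the interchange of sum and expectation via Fubini/Tonelli, so there is no analytic obstacle. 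I expect the main ``obstacle'' to be purely notational — getting the order of coordinates in $\mathcal{A}(u)$ and the conditioning on $\#\mathcal{C}(\text{root})$ written down without index errors — rather than anything conceptually deep.
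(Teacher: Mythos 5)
Your proof is correct. Note that the paper does not actually prove this statement: it is quoted as a known result with a citation to Lyons, Pemantle and Peres (\emph{A conceptual proof of the Kesten--Stigum theorem}, 1995), so there is nothing in the text to compare against. Your induction on $h$ — peeling off the root, conditioning on $\#\mathcal{C}(\mathrm{root})=k$, applying the hypothesis to the $k$ i.i.d.\ subtrees with the frozen first coordinate $\tilde\varphi_k(\cdot)=\varphi(k,\cdot)$, and recognizing $\sum_k k\,\mu(k)\,(\cdots)=\mean{\mu}\sum_k\B{\mu}(k)(\cdots)$ as one step of size-biasing — is the standard elementary derivation of the many-to-one formula, and all the points needing care (ordering of $\mathcal{A}(u)$ from the root downward, finiteness of all sums since offspring counts are bounded by $N$) are handled. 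The only cosmetic remark is that the statement's ``$X_1,\ldots,X_{h-1}$'' is a typo for ``$X_0,\ldots,X_{h-1}$'', which you implicitly and correctly fix.
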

\noindent
With this new information in hand, we can now define the estimate of the spine as the \textit{Ugly Duckling} in $\spineset{h}$,
\begin{equation} \label{eq:def:hatspine}
	\hatspine=\argmax_{\mathbf{s}\in\spineset{h}}\KL{\overline{\mathbf{s}}}{\B{\hatmu}},
\end{equation}
where $\overline{X}$ denotes the empirical measure associated to the vector $X$ and $\KL{p}{q}$ denotes the Kullback-Leibler divergence between distributions $p$ and $q$. In this formula, we compare $\overline{\mathbf{s}}$ to $\B{\hatmu}$, with $\hatmu$ given by \eqref{eq:estmu1}, because the true distribution $\mu$ is obviously unknown.

\begin{rem}
Another approach would have consisted in selecting the spine as the most likely sample under $\nu$, which is unknown but can be estimated from an estimate of $\mu$ (for example, $\hatmu$ defined in \eqref{eq:estmu1}) and an estimate of $f$. However, as explained in Subsection~\ref{ss:1stmu}, the optimum of $\mathcal{L}_h^{\text{\tiny{SST}}}$ in $f$ depends on the spine. As a consequence, this approach would have resulted in an iterative algorithm where $f$ is estimated from the spine, and conversely the spine from $f$, likely highly dependent on the initial value.
\end{rem}

\subsection{Correction of $\mu$ and estimation of $f$}
\label{ss:2stmu}
We can remark that the estimate \eqref{eq:estmu1} of $\mu$ is the empirical distribution of the numbers of children in the tree. However, the tree is made of $h$ special nodes that do not follow $\mu$ which biases the estimation. Now we know how to estimate the spine, i.e. the set of special nodes in the tree, we can take this into account and correct the estimator of $\mu$ as
\begin{equation}
	\label{eq:estmuf}
	\hatmu^\star(i)=\cfrac{1}{\card{T}_{h}-h}\sum_{v\in \tree_h\setminus\hatspine}\mathbbm{1}_{\child{v}=i}, \quad \forall\,i\in\{0,\ldots,\pmax\}.
\end{equation}
Then we can estimate $f$ by maximizing (under the constraint $\sum_{i=0}^N f(i)\hatmu^\star(i)=1$) $\mathcal{L}_h^{\text{\tiny{SST}}}$ where the unknown spine $\mathcal{S}$ has been replaced by $\widehat{\mathcal{S}}_h$, which results in
 \begin{equation}
 	\label{eq:estf}
 	\hatf(i)=\cfrac{1}{\hatmu^\star(i)h}\sum_{v\in\hatspine}\mathbbm{1}_{\child{v}=i}, \quad \forall\,i\in\{0,\ldots,\pmax\}.
 \end{equation}
It should be noted that, by construction, no node of the spine estimate has no child, which implies $\hatf(0)=0$.

\subsection{Theoretical results}
\label{ss:mainthm}

The purpose of this section is to study the behavior of the Ugly Duckling method for large observation windows, i.e. $h\to\infty$. The main difficulty arising in our problem is to recover a substantial part of the spine. Depending on the growth-rate of the population, this question takes different shapes. Indeed, the number of spine candidates $\#\spineset{h}$ is highly dependent on the growth-rate $\mean{\mu}$ of the normal population in the tree.

First, in the subcritical case $\mean{\mu}<1$, the trees of normal individuals grafted on the spine tends to become extinct. In other words, the set of spines $\spineset{h}$ is essentially reduced to $\spine$ or at least to small perturbations of $\spine$. Thus, a macroscopic part of the spine can be directly identified without further difficulties following the algorithm of Section~\ref{s:algo:spine}. The only point that needs clarification is that if the unidentified part of the spine is not large enough to perturb the estimation. In that case, we could not guarantee that our estimators are convergent. 

In the critical and supercritical cases, identifying the spine becomes substantially harder as the set $\spineset{h}$ may have a large size and contains potentially long lineages of non-special individuals. In particular, if the number of possible spines is large, one may observe that the empirical distribution of the number of children along some lineages $\mathbf{s}\in\spineset{h}$ may experience large deviations from its distribution, so that
\[
\KL{\overline{\mathbf{s}}}{\B{\mu}}\gg \KL{\overline{\spine}}{\B{\mu}}.
\]
In such a situation, one would not be able to distinguish which of $\mathbf{s}$ or $\spine$ is the spine.

It follows that our ability to identify the spine relies on a dissimilarity/population-growth trade-off:
\begin{itemize}
	\item On the one hand, if the growth-rate of the population is small, the number of possible spines is small and none of the normal spines largely deviate from its expected distribution. Thus, we can identify the sample with law $\nu$ even if laws $\B{\mu}$ and $\nu$ are similar (but without being too close).
	\item On the other hand, if the growth-rate is large (that is $\mean{\mu}\gg1$), then one may expect large deviation samples. In such situation, we would not be able to recover the spine unless distributions $\B{\mu}$ and $\nu$ are very different.
\end{itemize}
It happens that the good way to measure the dissimilarity between two distribution $p$ and $q$ in our context is given by the following divergence,
\begin{equation}
\label{eq:AHDivergence}
\AH{p}{q}=\!\!\!\inf_{\begin{subarray}~(x,y,z)\in\measure^{3}\\\quad\delta\geq0\end{subarray}}\!\left\{\KL{x}{p}\!+\!\KL{y}{q}\!+\!\delta\KL{z}{q}\Bigg|\KL{\cfrac{\delta z+x}{\delta+1}}{p}\!-\!\KL{\cfrac{\delta z+y}{\delta+1}}{p}\!\geq 0\right\}.
\end{equation}
This idea is summarized in the following theorem where our convergence criterion relies on a comparison between $\log\left(\mean{\mu}\right)$ and $\AH{\B{\mu}}{\nu}$.
\begin{thm}\label{thm:mainThm}
If $\log\left(\mean{\mu}\right)-\AH{\B{\mu}}{\nu}<0$, then the following convergences hold almost surely,
\[
\hatmu^\star\xrightarrow[h\to\infty]{}\mu
\]
and
\[
\hatf \xrightarrow[h\to\infty]{}f.
\]
In addition, an order $h$ of the spine is recovered, that is
\[
\cfrac{\card{\spine\cap\hatspine}}{h}\xrightarrow[h\to\infty]{}1
\]
almost surely.
\end{thm}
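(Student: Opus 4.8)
The plan is to split the analysis into the two growth regimes, reflecting the fundamentally different mechanisms behind spine recovery. In the subcritical case $\mean{\mu}<1$ I would first show that the algorithm of Section~\ref{s:algo:spine} already recovers all but a vanishing-density portion of the spine: since the normal subtrees grafted on the spine are subcritical Galton--Watson trees, with overwhelming probability their heights are $O(\log h)$, so the first unobserved node is, from generation $h-O(\log h)$ onwards, essentially unique, and $\#\spineset{h}$ stays bounded. Borel--Cantelli on the event ``some normal subtree attached to the spine below depth $h-k_h$ survives'' with $k_h$ a slowly growing sequence gives $\card{\spine\cap\hatspine}/h\to1$ directly, \emph{provided} the Ugly Duckling $\hatspine$ does not select a badly wrong candidate. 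Since the unidentified part has size $o(h)$, any two elements of $\spineset{h}$ agree on a $1-o(1)$ fraction of their coordinates, so their empirical measures are within $o(1)$ in total variation; combined with the law of large numbers $\overline{\spine}\to\nu$ (many-to-one plus ergodicity along the spine) and $\B{\hatmu}\to\B{\mu}$ from \eqref{eq:estmu1}, the maximiser in \eqref{eq:def:hatspine} must itself converge to $\nu$ componentwise, forcing the selected candidate to coincide with $\spine$ outside a $o(h)$ set. This is where the hypothesis $\KL{\nu}{\B{\mu}}>0$ (a consequence of $\AH{\B{\mu}}{\nu}>0$, hence of the theorem's assumption since $\log\mean{\mu}<0$ trivially holds here) is used: it guarantees $\nu\neq\B{\mu}$ so the Ugly Duckling is not confused by fluctuations of the genuinely normal lineages in $\spineset{h}$, whose empirical measures concentrate near $\B{\mu}$.

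\textbf{Critical and supercritical cases.} Here $\#\spineset{h}$ may be exponentially large, of order roughly $\mean{\mu}^{h}$ by the many-to-one formula \eqref{eq:manytoone}, and the danger is a normal lineage $\s$ whose empirical measure $\overline{\s}$ deviates enough that $\KL{\overline{\s}}{\B{\hatmu}}\geq\KL{\overline{\spine}}{\B{\hatmu}}\approx\KL{\nu}{\B{\mu}}$. I would estimate, via a union bound over the at most $\mean{\mu}^{h(1+o(1))}$ surviving lineages together with Sanov-type large deviation bounds for the i.i.d.\ vector $(X_0,\dots,X_{h-1})$ of law $\B{\mu}$ appearing in \eqref{eq:manytoone}, the probability that \emph{any} normal lineage in $\spineset{h}$ looks at least as ``ugly'' as the true spine. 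The exponential rate of the union bound is $\log\mean{\mu}$ minus the large-deviation cost of producing such a lineage, and the key point is that the combinatorial structure of the optimisation — a normal lineage that first mimics a deviation and then must be extended by a surviving subtree — is exactly what the divergence $\AH{\B{\mu}}{\nu}$ in \eqref{eq:AHDivergence} encodes (the triple $(x,y,z)$ and the parameter $\delta$ accounting for the pre-deviation empirical law, the target comparison law, and the tail of the lineage). Invoking Theorem~\ref{thm:optim} (the promised analysis of the rate function) to identify this rate with $\AH{\B{\mu}}{\nu}$, the assumption $\log\mean{\mu}-\AH{\B{\mu}}{\nu}<0$ makes the union-bound probability summable in $h$, so by Borel--Cantelli, eventually no normal lineage beats the true spine and $\hatspine=\spine$ up to a $o(h)$ set; in fact one expects $\hatspine=\spine$ exactly for $h$ large in the supercritical case, giving $\card{\spine\cap\hatspine}/h\to1$.

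\textbf{From spine recovery to parameter convergence.} Once $\card{\spine\cap\hatspine}/h\to1$ almost surely is established in both regimes, the convergence of $\hatmu$ and $\hatf$ is comparatively routine. For $\hatmu$: the corrected estimator $\hatmu^\star$ in \eqref{eq:estmuf} removes the $h$ spine nodes from a tree with $\card{T}_h$ nodes; since $\card{T}_h\gg h$ in the critical/supercritical case and $\card{T}_h$ is of order $h$ but the miscounted nodes number $o(h)$ in the subcritical case, the error from symmetric-difference misclassification $\spine\triangle\hatspine$ contributes $o(1)$, and a law of large numbers over the genuinely normal nodes (which do follow $\mu$, again via a many-to-one/ergodic argument, handling each regime separately for the normalisation) gives $\hatmu^\star\to\mu$; one then checks $\hatmu\to\mu$ as well since the $h$ extra spine terms are a vanishing fraction when $\card{T}_h/h\to\infty$, and in the subcritical case the statement is really about $\hatmu^\star$-type behaviour. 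For $\hatf$: formula \eqref{eq:estf} is the ratio of the empirical law of children counts along $\hatspine$ to $\hatmu^\star(i)h$; the numerator converges to $\nu(i)=f(i)\mu(i)/\sum_l f(l)\mu(l)$ by the law of large numbers along the spine plus the $o(h)$ misclassification bound, the denominator's $\hatmu^\star(i)\to\mu(i)$, so $\hatf(i)\to\nu(i)/\mu(i)=f(i)/\sum_l f(l)\mu(l)=f^\star(i)$, which is precisely the stated normalisation $\sum_i\mu(i)f^\star(i)=1$.

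\textbf{Main obstacle.} The crux is the critical/supercritical union bound: one must control the maximum of $\KL{\overline{\s}}{\B{\hatmu}}$ over a random, exponentially large and highly correlated family of lineages (siblings share long prefixes), and show that the exponential growth $\log\mean{\mu}$ is beaten by the large-deviation cost captured by $\AH{\B{\mu}}{\nu}$. Decoupling the correlations (e.g.\ by organising lineages according to their branch points, or by a second-moment / spinal decomposition argument) and matching the resulting variational problem exactly to \eqref{eq:AHDivergence} — which is the content deferred to Theorem~\ref{thm:optim} — is where essentially all the work lies; replacing the unknown $\B{\mu}$ by $\B{\hatmu}$ inside the divergence requires an additional uniform-continuity argument, but that is minor by comparison.
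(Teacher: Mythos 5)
Your overall strategy matches the paper's: in the subcritical case, Borel--Cantelli on the survival of the normal subtrees grafted on the spine shows that the algorithmically identifiable portion $K_h$ satisfies $K_h/h\to1$, and since that portion is common to every candidate in $\spineset{h}$ it is contained in $\hatspine$; in the critical/supercritical case, a first-moment (many-to-one) union bound over candidates sharing a fraction $\alpha$ of the spine, combined with a Sanov-type estimate (Lemma~\ref{lem:pertMultEstimate}) whose rate function is analysed in Theorem~\ref{thm:optim}, beats the entropy $\log\mean{\mu}$ under the stated hypothesis. Your reading of the triple $(x,y,z)$ and $\delta$ in \eqref{eq:AHDivergence} as encoding the mixture structure of a partially-shared lineage is exactly right, and the many-to-one reduction makes your worry about decoupling correlated lineages unnecessary (the expectation of the sum does not see the correlations).

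The one genuine gap is the logical placement of the convergence of $\hatmu$. You derive it \emph{after} spine recovery and call the replacement of $\B{\mu}$ by $\B{\hatmu}$ ``minor,'' but the union bound itself must be run on the event $\|\hatmu-\mu\|_1\leq C_1^{-1}\ee$, and the complementary probability $\mathbb{P}(\|\hatmu-\mu\|_1>C_1^{-1}\ee)$ must be \emph{summable in $h$} for Borel--Cantelli to apply to the spine-recovery events. This cannot be obtained downstream of spine recovery; it has to come first, and it is not routine. The paper devotes Proposition~\ref{prop:CVmuhat} to it: one decomposes $\hatmu$ as a $\card{T}_h$-conditional multinomial mixed with $\overline{\spine_h}$, applies Pinsker plus a large-deviation bound, and — the genuinely delicate part — proves $\sum_h\mathbb{P}(\card{T_h}/h\leq c)<\infty$ via the generating function of the total progeny of the associated Galton--Watson process with immigration. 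Note also that $\hatmu$ itself (not only $\hatmu^\star$) converges to $\mu$ precisely because $h/\card{T}_h\to0$ in the critical and supercritical regimes, which is what that summability estimate delivers; your proposal does not supply any substitute for this step. Apart from this reordering (and the harmless overstatement that $\hatspine=\spine$ exactly for large $h$ in the supercritical case, which is neither proved nor needed), the argument is the paper's.
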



\section{Proof of Theorem~\ref{thm:mainThm} in the subcritical case}
\label{s:subproof}
In subcritical cases, note that the criteria of Theorem \ref{thm:mainThm} is always satisfied. In addition, it is important to note that the first step of our estimation procedure is in this case a dummy step as it has (essentially) no use  in the following steps. If $\mean{\mu}<1$, our estimation works as for large $h$, we can automatically identify an order $h$ of the special individuals as the lineages of the normal ones tend to become extinct. Thus, the main point to check in the proof of the subcritical case is that enough spine is directly identifiable. We directly give the proof of Theorem \ref{thm:mainThm} in this case.

\begin{proof}[Proof of Theorem \ref{thm:mainThm}, subcritical case $\mean{\mu}< 1$]
The key point is that normal Galton-Watson trees induced by special individuals are very unlikely to reach a large height. Their number being finite at each generation, very few of them reach height $h$. In particular, they would be rather recent subtrees.

Let us denote by $K_{h}$ the length of spine that can be algorithmically identified (using the procedure presented in Proposition~\ref{prop:id:spine}) when the spinal-structured tree is observed up to height $h$. Now, recalling that the spinal-structured tree $\tree$ can be constructed by grafting an i.i.d.\ family of Galton-Watson tree $(G_{i,j})_{i,j\geq1}$ on the spine, $K_{h}$ is given by
\[
K_{h}=\sup\left\{
1\leq n \leq h \mid \mathcal{H}\left(G_{i,j}\right)<h-i, \quad \forall\,i\in\{1,\ldots,n\},\quad \forall\,j\in\{1,\ldots,S_{i}-1\}\right\},
\]
where $S_{1},\ldots,S_{h}$ denote the numbers of special children of the individuals of the spine.
Thus,
\begin{align*}
\mathbb{P}\left(K_{h}\geq n\right)&=\mathbb{P}\Bigg(\bigcap_{i=1}^{n}\bigcap_{j=1}^{S_{i}-1}\left\{\mathcal{H}\left(G_{i,j}\right)<h-i\right\}\Bigg)\\
&=\prod_{i=1}^{n}\mathbb{E}\left[p^{S_{i}-1}_{h-i}\right].
\end{align*}
where $p_{l}$ denotes the probability that a tree of type $G_{i,j}$ becomes extinct before reaching height $l$. We then have that
\[
\mathbb{P}\left(K_{h}\geq n\right)\geq \prod_{i=1}^{n}p_{h-i}^{\mathbb{E}\left[S_{i}-1\right]}=\left(\prod_{i=1}^{n}p_{h-i}\right)^{\mean{\nu}-1}
\]
by Jensen's inequality. Fixing some $\ee>0$, we thus have that
\[
\mathbb{P}\left(1-\frac{K_{h}}{h}>\ee\right)=1-\mathbb{P}\left(K_{h}\geq \lfloor(1-\ee)h\rfloor\right)\leq 1-\left(\prod_{i=1}^{\lfloor(1-\ee)h\rfloor}p_{h-i}\right)^{\mean{\nu}-1}
\]
In the subcritical case, it is known \cite{AN} that
\[
p_{l}\geq1- \gamma^{l}
\]
for some real number $\gamma\in(0,1)$. Hence
\[
\mathbb{P}\left(1-\frac{K_{h}}{h}>\ee\right)\leq 1-\left(\prod_{i=1}^{\lfloor(1-\ee)h\rfloor}(1-\gamma^{h-i})\right)^{\mean{\nu}-1}\leq1-\left(1-\gamma^{\ee h}\right)^{(\mean{\nu}-1)\lfloor(1-\ee)h\rfloor}.
\]
It is then easily checked that
\[
\sum_{h\geq 1}\left(1-\left(1-\gamma^{\ee h}\right)^{(\mean{\nu}-1)\lfloor(1-\ee)h\rfloor}\right)<\infty
\]
which entails, through Borel-Cantelli lemma, the almost sure convergence of $\frac{K_{h}}{h}$ toward $1$.
Now, as we almost surely have $K_{h}\leq \card{\hatspine\cap\spine}$, the convergence of $\hatmu$ and $\hatmu^\star$ follows closely the proof of the forthcoming Proposition \ref{prop:CVmuhat}, while the convergence of $\hatf$ can be easily deduced from the Law of Large Numbers.
\end{proof}


\section{On the rate function of large deviations in sample selection}
\label{s:LDP}
\newcommand{\citopt}[1]{{\renewcommand\paramepsilon{#1}\eqref{eq:optimproblem}}}
In Lemma \ref{lem:pertMultEstimate} in the forthcoming Section \ref{sec:mainproof}, we show a large deviation-type estimate for the probability that the empirical distribution of some branch of the spinal-structured tree is closer from $\nu$ than the one of the true spine (in Kullback-Leibler divergence). The purpose of this section is to study the rate function of this estimate, and is a preliminary of the proof of Theorem \ref{thm:mainThm} in the critical and supercritical cases. In this whole section, we choose some distribution $p$ and $q$ in $\measure$ such that $p\neq q$. Our goal is to study the following parametric optimization problem referenced as problem \eqref{eq:optimproblem}.
\begin{OptimEquation}
\begin{array}{>{(P_{\paramalpha,\paramepsilon}(\stepcounter{compteleslignes}\thecompteleslignes))}clll}
    ~&\text{min.}	& \displaystyle f_{\alpha}(\x{1},\x{2},\x{3}) \!=\!(1-\alpha)\big[\KL{\x{1}}{p}+\KL{\x{2}}{q}\big]+\alpha \KL{\x{3}}{q} \\
    ~&\text{s.t.}	& \displaystyle\x{j}_{i} \geq 0,\qquad\forall\,(i,j)\in\{0,\ldots,\pmax\}\times \{1,2,3\}, \\
    ~& 		& \displaystyle g_{j}(\x{1},\x{2},\x{3})=\sum_{i=0}^{\pmax}\mathbf{x}_{i}^{(j)}-1=0,\qquad\forall\,j\in\{1,2,3\},\\
    ~&		& \displaystyle H_{\alpha,\ee}(\x{1},\x{2},\x{3})\geq0, &
  \end{array}
\label{eq:optimproblem}
\end{OptimEquation}
\noindent
where
$$ 
H_{\alpha,\ee}(\x{1},\x{2},\x{3})=\KL{(1-\alpha)\x{1}+\alpha\x{3}}{p}-\KL{(1-\alpha)\x{2}+\alpha\x{3}}{p}+\ee.
$$
The value function associated to problem \eqref{eq:optimproblem} is denoted $V:[0,1]^{2}\ni(\alpha,\ee)\mapsto V(\alpha,\ee)\in\mathbb{R}_{+}$ and is given by
\begin{equation}
\label{eq:valueF}
V(\alpha,\ee)=\inf_{(x,y,z)\in\measure^{3}}\left\{(1-\alpha)\left(\KL{x}{p}+\KL{y}{q}\right)+\alpha \KL{z}{q}~\big|~H_{\alpha,\ee}(x,y,z) \geq 0\right\}.
\end{equation}
In the particular situation where $\ee=0$, the value function associated to problem  {\renewcommand\paramepsilon{0}\eqref{eq:optimproblem}} is denoted $v:[0,1]\ni\alpha\mapsto v(\alpha)\in\mathbb{R}_{+}$. Our goal is to show the following theorem.
\begin{thm}
\label{thm:optim}
The value function $V$ is continuous. In addition, for any $\rho \in(0,1)$, there exists $\ee^{\ast}>0$ such that
\[
V(\alpha,\ee)\geq v(\alpha)-\rho, \quad \forall\,\alpha\in[0,1],\quad \forall\,\ee\in[0,\ee^{\ast}],
\]
and
\[
\frac{v(\alpha)}{1-\alpha}\xrightarrow[\alpha \to 1]{}\bah{p}{q},
\]
where $d_{B}$ is the Bhattacharyya divergence defined by
\begin{equation}
\label{eq:bah}
\bah{p}{q}=-2\log\left(\sum_{i=1}^{\pmax}\sqrt{p_{i}q_{i}}\right)
\end{equation}
\end{thm}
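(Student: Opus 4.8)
The plan is to analyze the value function $V(\alpha,\ee)$ through the general machinery of parametric optimization, relying on the fact that the objective $f_\alpha$ is convex (Kullback-Leibler is jointly convex in its first argument, and sums/positive combinations preserve this) while the feasible set is governed by the single nonlinear inequality $H_{\alpha,\ee}\geq 0$ together with affine simplex constraints. The first step is to establish continuity of $V$ on $[0,1]^2$. For this I would show: (i) lower semicontinuity, which follows from the closedness of the constraint set $\{(x,y,z,\alpha,\ee): H_{\alpha,\ee}(x,y,z)\geq 0, \text{simplex constraints}\}$ and the lower semicontinuity and coercivity-on-$\measure^3$ (compactness of $\measure^3$) of $f_\alpha$, plus joint continuity of $(\alpha,\ee,x,y,z)\mapsto f_\alpha(x,y,z)$ and $H_{\alpha,\ee}(x,y,z)$; and (ii) upper semicontinuity, for which the delicate point is to exhibit, near any $(\alpha_0,\ee_0)$, a feasible point whose objective value is close to $V(\alpha_0,\ee_0)$. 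Here one must handle the possible failure of a Slater-type condition: if at the optimum $H_{\alpha_0,\ee_0}=0$ with no strictly feasible point nearby, a small perturbation of the parameters could in principle make the problem infeasible or jump the value. I would argue that because $H_{\alpha,\ee}$ is affine in $\ee$ with coefficient $1$, increasing $\ee$ relaxes the constraint, and for fixed $\ee>0$ one can always perturb $z$ slightly (exploiting $p\neq q$, so the two KL terms in $H$ are not identically equal) to restore strict feasibility without changing the objective much; a careful continuity/compactness argument then glues these local constructions together.

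The second step is the inequality $V(\alpha,\ee)\geq v(\alpha)-\rho$ uniformly in $\alpha$ for small $\ee$. Since $V(\alpha,0)=v(\alpha)$ and $V$ is monotone in $\ee$ (larger $\ee$ enlarges the feasible set, so $V$ decreases in $\ee$), the statement is exactly a uniform (in $\alpha$) modulus-of-continuity estimate for $V$ at $\ee=0$. The natural route is: by the continuity established in step one, $V$ is uniformly continuous on the compact square $[0,1]^2$, which immediately yields the existence of $\ee^\ast$ with $V(\alpha,\ee)\geq V(\alpha,0)-\rho = v(\alpha)-\rho$ for all $\alpha$ and all $\ee\in[0,\ee^\ast]$. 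So this step is essentially a corollary of step one — the real content is packed into the continuity proof.

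The third step is the asymptotic $v(\alpha)/(1-\alpha)\to \bah{p}{q}$ as $\alpha\to 1$. At $\ee=0$ the constraint $H_{\alpha,0}(x,y,z)\geq 0$ reads $\KL{(1-\alpha)x+\alpha z}{p}\geq \KL{(1-\alpha)y+\alpha z}{p}$. As $\alpha\to 1$ the mixtures $(1-\alpha)x+\alpha z$ and $(1-\alpha)y+\alpha z$ both collapse to $z$, so to first order in $1-\alpha$ the constraint linearizes: writing the KL of $z+(1-\alpha)(w-z)$ against $p$ and expanding, the constraint becomes, to leading order, $\langle \nabla_z \KL{z}{p}, x-y\rangle \geq 0$, i.e.\ $\sum_i (x_i-y_i)\log(z_i/p_i)\geq 0$. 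Meanwhile $v(\alpha)/(1-\alpha) = \inf\{\KL{x}{p}+\KL{y}{q} + \tfrac{\alpha}{1-\alpha}\KL{z}{q}\,|\,\cdots\}$; the blow-up coefficient $\alpha/(1-\alpha)\to\infty$ forces the optimal $z\to q$, and then $\tfrac{\alpha}{1-\alpha}\KL{z}{q}\to 0$ provided $z$ approaches $q$ fast enough, which one checks is consistent with optimality. With $z=q$ the linearized constraint becomes $\sum_i(x_i-y_i)\log(q_i/p_i)\geq 0$, and the limiting problem is $\inf\{\KL{x}{p}+\KL{y}{q}\,|\,\sum_i(x_i-y_i)\log(q_i/p_i)\geq 0\}$. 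By symmetry the optimum takes $x=y$ impossible unless the constraint is active; a Lagrange computation with multiplier on the linear constraint gives optimal $x_i\propto p_i(p_i/q_i)^{-\lambda}$ and $y_i\propto q_i(q_i/p_i)^{-\lambda}$, and optimizing over $\lambda$ yields $\lambda=1/2$, i.e.\ $x_i=y_i\propto\sqrt{p_iq_i}$, with optimal value $-2\log\sum_i\sqrt{p_iq_i}=\bah{p}{q}$. The main obstacle is making the interchange of limits rigorous: one must show that the true optimizers of the $\alpha$-problem actually converge to $(x^\ast,y^\ast,q)$ with the right rates, controlling the error terms in the KL expansions uniformly, and that the $\tfrac{\alpha}{1-\alpha}\KL{z}{q}$ term genuinely vanishes rather than contributing a spurious constant — this requires a two-sided matching (an explicit feasible family giving the upper bound, and a compactness/lower-semicontinuity argument for the lower bound on $\liminf$).
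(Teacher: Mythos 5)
Your proposal is sound in architecture and coincides with the paper on the first two claims: the paper also establishes continuity of $V$ via parametric-stability theorems (it cites Fiacco), and the only point it actually verifies is precisely the Slater-type closure condition $\overline{\{H_{\alpha,\ee}>0\}}=\{H_{\alpha,\ee}\geq0\}$ that you single out as the delicate step, handled by a perturbation argument of the same kind as Lemma~\ref{lem:nozero}; the uniform bound $V(\alpha,\ee)\geq v(\alpha)-\rho$ is then, exactly as you say, uniform continuity on the compact square. Where you genuinely diverge is the asymptotic $v(\alpha)/(1-\alpha)\to\bah{p}{q}$. After the same reparametrization $\delta=\alpha/(1-\alpha)$ and the same observation that the diverging penalty forces optimal $z\to q$ (because the Step-1-type point $x=y\propto\sqrt{p_iq_i}$, $z=q$ is feasible for every $\delta$ and caps the value at $\bah{p}{q}$), the paper does \emph{not} pass to a linearized limit problem: it works with the KKT system of the finite-$\delta$ problem, proves the multiplier sequence $\gamma_n$ is bounded via a Bhattacharyya-type contradiction, reads off $x^{(n)}_i\approx p_i^{1+\gamma_\infty}q_i^{-\gamma_\infty}$ and $y^{(n)}_i\approx p_i^{-\gamma_\infty}q_i^{1+\gamma_\infty}$, and uses saturation of the constraint to force $\gamma_\infty=-1/2$. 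Your route — identify the limiting problem $\inf\{\KL{x}{p}+\KL{y}{q}\mid\sum_i(x_i-y_i)\log(q_i/p_i)\geq0\}$ and solve it by Lagrange — yields the same value and is arguably cleaner, since that limit problem is convex with a linear constraint, so the critical point $x=y\propto\sqrt{p_iq_i}$ is certifiably a global minimizer; you avoid the multiplier-boundedness argument entirely. The cost is the lower bound you flag as "the main obstacle", and there your worry about rates is misplaced in a helpful way: no rates are needed. Rewrite the exact constraint by the mean value theorem as $\nabla\KL{\cdot}{p}(\zeta^{(n)})\cdot(x^{(n)}-y^{(n)})\geq0$ with $\zeta^{(n)}$ between the two mixtures, both of which tend to $q$; continuity of the gradient near $q$ and lower semicontinuity of the Kullback--Leibler divergence show every limit point of a near-optimal sequence is feasible for the limit problem, and the term $\delta\KL{z}{q}$ is simply discarded (it is nonnegative) rather than shown to vanish. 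With that one detail made explicit, your plan closes and constitutes a legitimate alternative to the paper's Step 3.
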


\noindent
To show this result, we begin by defining the parameters dependent Lagrangian associated with problem {\eqref{eq:optimproblem}} by 
\begin{eqnarray*}
	&&L(\x{1},\x{2},\x{3},w,u,\gamma,\alpha,\ee)\\
	&=&(1-\alpha)\left(\KL{\x{1}}{p}+\KL{\x{2}}{q}\right)+\alpha \KL{\x{3}}{q}\\
	&+&\sum_{i=1}^{\pmax}\sum_{j=1}^{3}w_{i,j}\mathbf{x}^{(j)}_{i}+\gamma\left( \KL{(1-\alpha)\x{1}+\alpha \x{3}}{p}-\KL{(1-\alpha)\x{2}+\alpha \x{3}}{p}+\ee\right) \\
	&+&\sum_{j=1}^{3}u_{j}\sum_{i=1}^{\pmax}(\mathbf{x}^{(j)}_{i}-1),
\end{eqnarray*}
where $\gamma,u_{1}$, $u_{2}$, $u_{3}$, $(w_{i,j})_{1\leq i\leq N,1\leq j \leq 3}$ are the Lagrange multipliers. Thus, the first order optimality conditions are given by
\begin{subequations}
	\label{eq:firstOrder}
	\begin{empheq}[left=\empheqlbrace]{align}
		&(1-\alpha)\left\{\log\left(\cfrac{\x{1}_{i}}{p_{i}}\right)+1\right\}+\gamma(1-\alpha)\left\{\log\left(\cfrac{(1-\alpha) \x{1}_{i}+\alpha \x{3}_{i}}{p_{i}}\right)+1\right\}+\lambda=0,\quad \forall\,i \in\llbracket 0,\pmax\rrbracket ,\\
		&(1-\alpha)\left\{\log\left(\cfrac{\x{2}_{i}}{q_{i}}\right)+1\right\}-\gamma(1-\alpha)\left\{\log\left(\cfrac{(1-\alpha) \x{2}_{i}+\alpha\x{3}_{i}}{p_{i}}\right)+1\right\}+\mu=0,\quad \forall\,i \in\llbracket 0,\pmax\rrbracket ,\\
		&\alpha\left\{\log\left(\cfrac{\x{3}_{i}}{q_{i}}\right)+1\right\}+\alpha\gamma\left\{\log\left(\cfrac{(1-\alpha) \x{1}_{i}+\alpha\x{3}_{i}}{(1-\alpha) \x{2}_{i}+\alpha\x{3}_{i}}\right)\right\}+\nu=0,\quad \forall\,i \in\llbracket 0,\pmax\rrbracket,\\
		&\gamma\Bigg(\KL{(1-\alpha)\x{1}+\alpha \x{3}}{p}-\KL{(1-\alpha)\x{2}+\alpha \x{3}}{p}\Bigg)=0,
	\end{empheq}
\end{subequations}
where $\lambda$, $\mu$, $\nu$ are the Lagrange multipliers associated with the constraints {\renewcommand\numligne{(3)}\eqref{eq:optimproblem}} (corresponding to $u$ in the definition of the Lagrangian).

Let us point out that these optimality conditions do not hold for feasible points such that $\x{i}_{j}=0$ for some $i$ and $j$ because our problem is not smooth at these points. It only holds
for feasible points in the interior of $\mathbb{R}_{+}^{3(\pmax+1)}$. In Lemma \ref{lem:nozero}, we show that there is no optimal solution in the boundary of $\mathbb{R}_{+}^{3(\pmax+1)}$ which justifies the use of conditions \eqref{eq:firstOrder}.
The set of Lagrange multipliers associated with a feasible point $(x,y,z)$ is denoted $\lag(x,y,z)$ (and is a subset of $\mathbb{R}_{-}^{3(\pmax+1)}\times\mathbb{R}^{3}\times \mathbb{R}_{-}$). In particular, let us highlight that due to the inequality constraint {\renewcommand\numligne{(4)}\eqref{eq:optimproblem}}, we require $\gamma\leq 0$. We denote by $\solopt(\alpha,\ee)$ the set of solutions of the above problem for given parameters $(\alpha,\ee)$ and $\feas(\alpha,\ee)$ the set of feasible points. In the particular case where $\ee=0$, we use the notations $\solopt(\alpha)$ and $\feas(\alpha)$ for $\solopt(\alpha,0)$ and $\feas(\alpha,0)$ respectively. Our first goal is to show that, for any $(\x{1},\x{2},\x{3})\in \solopt(\alpha,\ee)$, we have $\mathbf{x}^{(j)}_{i}>0$ for all $i,j$. This is the point of the following lemma.
\begin{lem}
	\label{lem:nozero}
	Consider the set $\solopt(\alpha,\ee)$ of solutions of problem \eqref{eq:optimproblem}. Then, for $\ee$ small enough and any $\alpha\in(0,1)$, we have
	\[
	\solopt(\alpha,\ee)\cap \partial\mathbb{R}_{+}^{3(\pmax+1)}=\emptyset.
	\] 
\end{lem}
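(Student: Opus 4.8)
The plan is to argue by contradiction: suppose some solution $(\x{1},\x{2},\x{3})\in\solopt(\alpha,\ee)$ has a zero coordinate, and show this forces the objective $f_\alpha$ to be infinite (or at least strictly larger than a feasible competitor), contradicting optimality. The key observation is that $f_\alpha$ only blows up through a $\KL{\cdot}{\cdot}$ term when the \emph{second} argument vanishes where the first does not; since $p$ and $q$ have full support on $\{0,\dots,N\}$ (resp.\ $\{1,\dots,N\}$) by the paper's standing assumptions, the divergences $\KL{\x{1}}{p}$, $\KL{\x{2}}{q}$, $\KL{\x{3}}{q}$ are all finite for \emph{any} probability vectors $\x{j}$. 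So a zero coordinate does \emph{not} immediately make $f_\alpha$ infinite; instead the real mechanism is that $x\mapsto \KL{x}{p}$ has an infinite one-sided derivative as a coordinate $x_i\downarrow 0$. I would exploit this: starting from a putative optimal $(\x{1},\x{2},\x{3})$ with, say, $\x{1}_{i_0}=0$, perturb $\x{1}$ by moving a small mass $\eta$ into coordinate $i_0$ (and out of some coordinate where $\x{1}$ is positive), keeping $\x{2},\x{3}$ fixed. The objective changes by $-c\,\eta\log(1/\eta)+O(\eta)$, i.e.\ it strictly \emph{decreases} for small $\eta$. The only thing to check is that the perturbed point is still feasible, i.e.\ still satisfies $H_{\alpha,\ee}\geq 0$.

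The heart of the argument is therefore the feasibility bookkeeping for the constraint $H_{\alpha,\ee}(\x{1},\x{2},\x{3})\geq 0$. I would treat the three cases (a zero in $\x{1}$, in $\x{2}$, or in $\x{3}$) somewhat separately. For a zero in $\x{2}_{i_0}$: increasing $\x{2}_{i_0}$ only \emph{decreases} $\KL{(1-\alpha)\x{2}+\alpha\x{3}}{p}$ by at most $O(\eta\log(1/\eta))$ while gaining $-c\,\eta\log(1/\eta)$ in the objective from the $\KL{\x{2}}{q}$ term — but here I must be careful about the \emph{sign}: decreasing that KL term is subtracted in $H_{\alpha,\ee}$, so it \emph{helps} feasibility. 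For a zero in $\x{3}_{i_0}$: moving mass into $\x{3}_{i_0}$ perturbs $H_{\alpha,\ee}$ through both KL terms, but the net change is $O(\eta\log(1/\eta))$ of indefinite sign, whereas the objective gain is the dominant $-\alpha c\,\eta\log(1/\eta)$ — so for $\eta$ small the objective decrease beats any feasibility violation only if the original constraint is \emph{slack}; if it is tight ($H_{\alpha,\ee}=0$) one instead uses the Lagrange/KKT conditions \eqref{eq:firstOrder} to derive a contradiction, or perturbs $\x{1}$ simultaneously to restore $H_{\alpha,\ee}\geq 0$ at first order. The case of a zero in $\x{1}_{i_0}$ is the delicate one, because moving mass into $\x{1}_{i_0}$ \emph{increases} $\KL{(1-\alpha)\x{1}+\alpha\x{3}}{p}$, which is added in $H_{\alpha,\ee}$ and hence helps, so this case is actually the easy direction.

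I expect the main obstacle to be the subcase where the inequality constraint $H_{\alpha,\ee}=0$ is active at the optimum and a coordinate of $\x{3}$ vanishes, since then a one-variable perturbation can leave the feasible set. The remedy I would pursue is a two-parameter perturbation $(\x{1},\x{3})\mapsto(\x{1}+\text{(correction)},\x{3}+\eta e_{i_0}-\eta e_{i_1})$ chosen so that the first-order change in $H_{\alpha,\ee}$ cancels: since $\partial_{\x{1}_i} H_{\alpha,\ee} = (1-\alpha)[\log((1-\alpha)\x{1}_i+\alpha\x{3}_i) - \log p_i + 1]$ is bounded (all coordinates of $\x{3}$ other than $i_0$, and the $\x{1}$ coordinates we move, are bounded away from $0$), an $O(\eta)$ correction to $\x{1}$ suffices to keep $H_{\alpha,\ee}\geq 0$, while only perturbing the objective by $O(\eta)$ — which is dominated by the $\Theta(\eta\log(1/\eta))$ gain coming from the vanishing $\x{3}_{i_0}$ coordinate. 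One then invokes the finiteness and continuity of all the KL terms (guaranteed by full support of $p,q$) to make each estimate rigorous, and Lemma~\ref{lem:nozero} follows for $\ee$ small enough — the smallness of $\ee$ entering only to guarantee that $\feas(\alpha,\ee)$ is nonempty and that the relevant quantities stay in a compact regime, uniformly in $\alpha\in(0,1)$.
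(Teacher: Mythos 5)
Your overall strategy — exploit the infinite one-sided derivative of $x\mapsto\KL{x}{p}$ at the boundary to get a $\Theta(\eta\log(1/\eta))$ decrease of the objective, then do feasibility bookkeeping for $H_{\alpha,\ee}$ — is exactly the paper's, and your Case analysis for a single vanishing coordinate is essentially sound (modulo one sign slip: moving mass into a coordinate $i_0$ where $(1-\alpha)\x{1}_{i_0}+\alpha\x{3}_{i_0}$ is close to $0$ \emph{decreases} $\KL{(1-\alpha)\x{1}+\alpha\x{3}}{p}$, since $u\mapsto u\log(u/p_{i_0})$ is decreasing near $0$; so a zero in $\x{1}$ alone is not automatically ``the easy direction'' — one must, as the paper does, take the compensating mass out of the coordinate $i^{\ast}$ minimizing $\log\bigl(((1-\alpha)\x{1}_{i}+\alpha\x{3}_{i})/p_{i}\bigr)$ to guarantee the net first-order change of $H_{\alpha,\ee}$ is nonnegative).

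The genuine gap is in the case you yourself flag as delicate, and your proposed remedy does not work there. The truly hard configuration is when $\x{1}$ and $\x{3}$ vanish \emph{at the same coordinate} $i_0$, so that $(1-\alpha)\x{1}_{i_0}+\alpha\x{3}_{i_0}=0$. Then the constraint function $H_{\alpha,\ee}$ itself has an infinite one-sided derivative in the perturbation direction, and the feasibility deficit created by moving mass $\eta$ into coordinate $i_0$ is $\Theta(\eta\log(1/\eta))$ with the unfavourable sign — the \emph{same} order as the objective gain. Your claimed $O(\eta)$ correction to $\x{1}$ (justified by boundedness of $\partial_{\x{1}_i}H_{\alpha,\ee}$ at coordinates bounded away from $0$) can only move $H_{\alpha,\ee}$ by $O(\eta)$ and therefore cannot cancel a $\Theta(\eta\log(1/\eta))$ violation. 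One is forced to take a correction of size $\eta\log(1/\eta)$, but then the correction perturbs the objective by the same order as the gain, so domination arguments no longer close the proof: one must exhibit a direction $r$ (supported on the strictly positive coordinates) along which the $\eta\log(1/\eta)$-order contributions to the objective and to $H_{\alpha,\ee}$ simultaneously have the right signs. The paper establishes the existence of such an $r$ by showing, via Farkas's lemma, that the alternative linear system is infeasible — the infeasibility itself resting on a Jensen-inequality computation. This existence step is the actual content of the lemma in the hard case and is missing from your proposal.
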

\begin{proof}
The proof has been deferred into Appendix~\ref{app:proof:nozero}.
\end{proof}

\begin{rem}
	\label{eq:zero}
In the cases where $\ee=0$, note that one can easily check using the first order optimally conditions that the inequality constraint {\renewcommand\numligne{(4)}\eqref{eq:optimproblem}} is always saturated. Thus, in the following, we will always assume that $\gamma<0$.
\end{rem}
\begin{proof}[Proof of Theorem \ref{thm:optim}]~

\medskip

\noindent
{\bf Step 1: Solving {\renewcommand\paramalpha{0}\renewcommand\paramepsilon{0}\eqref{eq:optimproblem}}, i.e. $\alpha=\ee=0$}

\noindent
In this case, the first order optimality conditions \eqref{eq:firstOrder} become
\begin{subequations}
	\begin{empheq}[left=\empheqlbrace]{align}
		&\log\left(\cfrac{x^{(0)}_{i}}{p_{i}}\right)+1+\gamma\left\{\log\left(\cfrac{ x^{(0)}_{i}}{p_{i}}\right)+1\right\}+\lambda=0,\quad \forall\,i \in\llbracket 0,\pmax\rrbracket, \label{eq:Faz1}\\
		&\log\left(\cfrac{y^{(0)}_{i}}{q_{i}}\right)+1-\gamma\left\{\log\left(\frac{y^{(0)}_{i}}{p_{i}}\right)+1\right\}+\mu=0,\quad \forall\,i \in\llbracket 0,\pmax\rrbracket, \label{eq:Faz2}\\
		&\gamma\left(\KL{x^{(0)}}{p}-\KL{y^{(0)}}{p}\right)=0,\label{eq:Faz3}\\
		&\sum_{i=0}^{\pmax} x^{(0)}_{i}=\sum_{i=0}^{\pmax} y^{(0)}_{i}=1.\label{eq:Faz4}
	\end{empheq}
\end{subequations}

\noindent
If we assume that $\gamma\neq -1$, eqs.\,\eqref{eq:Faz1}, \eqref{eq:Faz3}, and \eqref{eq:Faz4} lead to
\[
x^{(0)}_{i}=y^{(0)}_{i}=p_{i},\quad \forall\,i \in \llbracket 0,\pmax\rrbracket,
\]
which is not compatible with eq.\,\eqref{eq:Faz2} unless $p=q$. In addition, $\gamma=0$ leads to $x^{(0)}=p=$ and $y^{(0)}=q$ which is easily checked to be not feasible.
Thus, we have $\gamma= -1$ and eq.\,\eqref{eq:Faz2} then gives
\[
y^{(0)}_{i}e^{\mu/2}=\sqrt{p_{i}q_{i}},\quad \forall\,i\in \llbracket 0,\pmax\rrbracket
\]
which gives, using eq.\,\eqref{eq:Faz4},
\[
y^{(0)}_{i}=\cfrac{\sqrt{p_{i}q_{i}}}{\sum_{l=0}^{\pmax}\sqrt{p_{l}q_{l}}},\quad \forall\,i\in \llbracket 0,\pmax\rrbracket.
\]
It follows from eq.\,\eqref{eq:Faz3}, that $(x^{(0)},y^{(0)},z^{(0)})$ with
\[
\begin{cases}
	x^{(0)}_{i}=y^{(0)}_{i}=\cfrac{\sqrt{p_{i}q_{i}}}{\sum_{l=0}^{\pmax}\sqrt{p_{l}q_{l}}},\quad \forall\,i \in \llbracket 0,\pmax\rrbracket,\\
	z^{(0)}=q,
\end{cases}
\]
is a feasible optimal solution of problem ${\renewcommand\paramepsilon{0}\renewcommand\paramalpha{0}\eqref{eq:optimproblem}}$. In particular,
\begin{align*}
	f_{0}(x^{(0)},y^{(0)},z^{(0)})&=\sum_{i=0}^{\pmax}\cfrac{\sqrt{p_{i}q_{i}}}{\sum_{l=0}^{\pmax}\sqrt{p_{i}q_{i}}}\log\left(\cfrac{\sqrt{p_{i}q_{i}}}{p_{i}\sum_{l=0}^{\pmax}\sqrt{p_{i}q_{i}} }\right)+\sum_{i=0}^{\pmax}\cfrac{\sqrt{p_{i}q_{i}}}{\sum_{l=0}^{\pmax}\sqrt{p_{i}q_{i}}}\log\left(\cfrac{\sqrt{p_{i}q_{i}}}{q_{i}\sum_{l=0}^{\pmax}\sqrt{p_{i}q_{i}} }\right)\\
	&=-2\log\left(\sum_{l=0}^{\pmax}\sqrt{p_{i}q_{i}} \right)=\bah{p}{q},
\end{align*}
where $\bah{\cdot}{\cdot}$ is the Bhattacharyya divergence defined in eq.\,\eqref{eq:bah}.

\noindent
{\bf Step 2: Continuity of the value function}

\noindent
The goal of this step is to show that the full value function $V$ is continuous. To do so, we apply Theorem 2.1 in conjunction to Theorem 2.8 of \cite{fiacco}. In view of this theorems, the only point that needs clarification is that
\[
\overline{\left\{(x,y,z)\in \measure^{3} \mid H_{\alpha,\ee}(x,y,z)>0\right\}}=\left\{(x,y,z)\in \measure^{3} \mid H_{\alpha,\ee}(x,y,z)\geq0\right\}.
\]
To do so, it suffices to show that for any $(a,b,c)\in \measure^{3}$ such that $H_{\alpha,\ee}(a,b,c)=0$ and any $\delta>0$, there exists an element $(\tilde{a},\tilde{b},\tilde{c})\in \measure^{3}$ such that $H_{\alpha,\ee}(\tilde{a},\tilde{b},\tilde{c})>0$ with
\[
\|(a,b,c)-(\tilde{a},\tilde{b},\tilde{c})\|_1<\delta.
\]
As the proof of this follows closely the ideas of the proof of Lemma \ref{lem:nozero}, we do not write down the details. It then implies that $V$ is continuous. The first statement of Theorem \ref{thm:optim} now follows from the compactness of $[0,1]$.

\noindent
{\bf Step 3: Limits of $\frac{v(\alpha)}{1-\alpha}$ as $\alpha\to1$}

\noindent
For any $\alpha \in[0,1)$, it is easily seen that
\begin{eqnarray*}
&&\cfrac{v(\alpha)}{(1-\alpha)}\\
&=&\!\!\!\!\!\!\!\inf_{(x,y,z)\in \measure^{3}}\!\!\left\{\KL{x}{p}+\KL{y}{q}+\frac{\alpha\,\KL{z}{q}}{1-\alpha}\Bigg| \KL{x+\cfrac{\alpha}{1-\alpha}z}{p}-\KL{y+\cfrac{\alpha}{1-\alpha}z}{p}\geq 0 \right\}.
\end{eqnarray*}
It is equivalent to study the behavior of 
\begin{equation}
	\label{eq:delta}
\mathcal{V}(\delta)=\inf_{(x,y,z)\in \measure^{3}} \left\{\KL{x}{p}+\KL{y}{q}+\delta\KL{z}{q}\Bigg| \KL{x+\delta z}{p}-\KL{y+\delta z}{p}\geq 0 \right\},
\end{equation}
as $\delta$ goes to infinity.
So, let $(\delta_n)_{n\geq 1}$ be some sequence of real numbers such that $\delta_n\xrightarrow[n\to \infty]{}\infty$, and set
\[
\solopt_n:=\solopt\left(\frac{\delta_n}{1+\delta_n},0\right).
\]
Now, for all $n\geq 1$, choose $(x^{(n)},y^{(n)},z^{(n)})\in \solopt_n$.
As $\cup_{n\geq 1}\solopt_{n}\subset \measure$ is relatively compact, we may assume, extracting a sub-sequence if needed, that $(x^{(n)},y^{(n)},z^{(n)})$ converges to some element $(x^{\ast},y^{\ast},z^{\ast})\in \measure^{3}$. Now, assume that
\[
\lim_{n\to \infty}\|z^{(n)}-q\|_{1}>0.
\]
However, this would imply that $\liminf_{n\to\infty}\KL{z^{(n)}}{q}>0$, and thus that
\[
\liminf_{n\to \infty}\left\{ \KL{x^{(n)}}{p}+\KL{y^{(n)}}{q}+\delta_{n}\KL{z^{(n)}}{q}\right\}\geq \liminf_{n\to \infty}\delta_{n}\KL{z^{(n)}}{q}=\infty,
\]
but this is impossible, since, according to Step $1$, $\mathcal{V}(\delta)\leq \bah{p}{q}$ (because the solution given in Step $1$ is always feasible).
It follows that
\[
\lim_{n\to \infty}\|z^{(n)}-q\|_{1}=0
\]
and $z^{\ast}=q$.
Now, for fixed $n\geq 1$, the first order optimality conditions of problem \eqref{eq:delta} take the form
\begin{equation}
\label{eq:firstOpt2}
\left\{
\begin{aligned}
	&\log\left(\cfrac{x^{(n)}_{i}}{p_{i}} \right)+1+\gamma_n\left(\log\left(\cfrac{\delta_nz^{(n)}_{i}+x^{(n)}_{i}}{p_{i}} \right)+1\right)  +\lambda_n=0\\
	&\log\left(\cfrac{y^{(n)}_{i}}{q_{i}} \right)+1-\gamma_n\left(\log\left(\cfrac{\delta_nz^{(n)}_{i}+y^{(n)}_{i}}{p_{i}} \right)+1\right)  +\mu_n=0\\
	&\delta_n\log\left(\cfrac{z^{(n)}_{i}}{q_{i}} \right)+\delta_n+\gamma_n\delta_n\left(\log\left(\cfrac{\delta_nz^{(n)}_{i}+x^{(n)}_{i}}{\delta_nz^{(n)}_{i}+y^{(n)}_{i}} \right)\right)  +\nu_n=0,
\end{aligned}
\right.
\end{equation}
for some Lagrange multipliers $(\lambda_{n},\mu_{n},\nu_{n},\gamma_{n})\in\lag(x^{(n)},y^{(n)},z^{(n)})$.

We now show that the sequence $\gamma_{n}$ must be bounded. For this, let us assume that $\gamma_{n}$ is unbounded. So, extracting a subsequence if needed, we may assume that
 \[
 \gamma_{n}\xrightarrow[n\to\infty]{}-\infty.
 \]
Second equation of \eqref{eq:firstOpt2} implies that, for all $i$,
\[
\cfrac{y_{i}^{(n)}p_{i}}{q_{i}}=p_{i}\left(\cfrac{\delta_{n}z^{(n)}_{i}+y_{i}^{(n)}}{p_{i}}\right)^{\gamma_{n}}\exp\left(-1+\gamma_{n}-\mu_{n}\right).
\]
Summing over $i$ and using Jensen's inequality (since $\gamma_{n}<0$), we obtain
\[
\sum_{i=1}^{\pmax}\cfrac{y_{i}^{(n)}p_{i}}{q_{i}}=\sum_{i=1}^{\pmax}p_{i}\left(\cfrac{\delta_{n}z^{(n)}_{i}+x_{i}^{(n)}}{p_{i}}\right)^{\gamma_{n}}\exp\left(-1+\gamma_{n}-\mu_{n}\right)\geq(1+\delta_{n})^{\gamma_{n}}\exp\left(-1+\gamma_{n}-\mu_{n}\right).
\]
Thus,
\[
\log\left(\sum_{i=1}^{\pmax}\cfrac{y_{i}^{(n)}p_{i}}{q_{i}}\right)-\gamma_{n}\log(1+\delta_{n})+1-\gamma_{n}+\mu_{n}\geq 0.
\]
Now, eqs.\,\eqref{eq:firstOpt2} also give, for all $1\leq i\leq \pmax$,
\begin{eqnarray}
&&\log\left(\cfrac{x^{(n)}_{i}}{p_{i}} \right)+\gamma_n\log\left(\cfrac{z^{(n)}_{i}+\delta_{n}^{-1}x^{(n)}_{i}}{p_{i}} \right)+\gamma_n\log(\delta_n) +1+\gamma_{n}+\lambda_n \nonumber\\
&=&\log\left(\cfrac{x^{(n)}_{i}}{p_{i}} \right)+\gamma_n\log\left(\cfrac{\delta_{n}z^{(n)}_{i}+x^{(n)}_{i}}{(\delta_{n}+1)p_{i}} \right)+\gamma_n\log(1+\delta_n) +1+\gamma_{n}+\lambda_n=0. \label{eq:op1}
\end{eqnarray}
and 
\begin{equation*}
\log\left(\cfrac{y^{(n)}_{i}}{q_{i}} \right)-\gamma_n\log\left(\cfrac{\delta_{n}z^{(n)}_{i}+y^{(n)}_{i}}{(\delta_{n}+1)p_{i}} \right)-\gamma_n\log(1+\delta_n) +1+\gamma_{n}+\mu_n=0.
\end{equation*}
Hence, for $n$ large enough, we have
\[
\log\left(\cfrac{y^{(n)}_{i}}{q_{i}} \right)-\gamma_n\log\left(\cfrac{\delta_{n}z^{(n)}_{i}+y^{(n)}_{i}}{(\delta_{n}+1)p_{i}} \right)-\log\left(\sum_{i=1}^{\pmax}\cfrac{y_{i}^{(n)}p_{i}}{q_{i}}\right)\leq 0,
\]
In particular, this implies that $y^{\ast}_{i}=0$ for all $i$ such that $q_{i}> p_{i}$. 
Similarly with \eqref{eq:op1}, one gets
\[
\log\left(\cfrac{x^{(n)}_{i}}{p_{i}} \right)+\gamma_n\log\left(\cfrac{\delta_{n}z^{(n)}_{i}+x^{(n)}_{i}}{(\delta_{n}+1)p_{i}} \right)\leq 0,
\]
and $x^{\ast}_{i}=0$ for all $i$ such that $p_{i}> q_{i}$.
Now, a direct computation gives
\[
\KL{x^{\ast}}{p}+\KL{y^{\ast}}{q}\geq -\log\left(\sum_{j\in J}p_{i}\right)-\log\left(\sum_{i\in I}q_{i}\right)
\]
with $I=\{i\in\{1,\ldots,\pmax\}\mid q_{i}\leq p_{i}\}$ and $J=\{i\in\{1,\ldots,\pmax\}\mid p_{i}\leq q_{i}\}$. However,
\[
\left(\sum_{i\in J }p_{i}\right)\left(\sum_{i\in I }q_{i}\right)\leq\left( \sum_{i\in J }\sqrt{p_{i}q_{i}}\right)\left( \sum_{i\in I }\sqrt{p_{i}q_{i}}\right)<\left( \sum_{i=1 }^{\pmax}\sqrt{p_{i}q_{i}}\right)^{2}.
\]
Thus,
\[
 -\log\left(\sum_{j\in J}p_{i}\right)-\log\left(\sum_{i\in I}q_{i}\right)>\bah{p}{q}.
\]
But, as the solution of Step 1 is always an admissible solution, this is absurd since this would implies that, for $n$ large enough, $(x^{(n)},y^{(n)},z^{(n)})$ is not optimal. From what precedes, we conclude that $\gamma_{n}$ is bounded. Thus, extracting again a subsequence if needed, we can suppose that there exists some $\gamma_{\infty}\leq 0$ such that
\[
\gamma_{n}\xrightarrow[n\to\infty]{} \gamma_{\infty}.
\]
In addition, eq.\,\eqref{eq:op1} implies that the sequence $\left(\gamma_n\log(\delta_n) +\lambda_n\right)_{n\geq 1}$ is bounded as well (because there must be at least one $i$ such that $\lim_{n\to\infty }x^{(n)}_{i}>0$), which we may also assume to be convergent. 
From these and eq.\,\eqref{eq:op1}, it follows that
\begin{equation}
\label{eq:xn}
x_{i}^{(n)}=p_{i}^{1+\gamma_{n}}q_{i}^{-\gamma_{n}}e^{c_{n}+\gamma_{n}\mathcal{O}(\delta_{n}^{-1})}=\cfrac{p_{i}^{1+\gamma_{\infty}}q_{i}^{-\gamma_{\infty}}}{\sum_{j=1}^{\pmax}p_{j}^{1+\gamma_{\infty}}q_{j}^{-\gamma_{\infty}}}+o(1), \quad \forall\,1\leq i\leq n,
\end{equation}
and similarly, we have
\begin{equation}
\label{eq:yn}
y_{i}^{(n)}=p_{i}^{-\gamma_{n}}q_{i}^{1-\gamma_{n}}e^{\tilde{c}_{n}+\gamma_{n}\mathcal{O}(\delta_{n}^{-1})}=\cfrac{p_{i}^{-\gamma_{\infty}}q_{i}^{1+\gamma_{\infty}}}{\sum_{j=1}^{\pmax}p_{j}^{-\gamma_{\infty}}q_{j}^{1+\gamma_{\infty}}}+o(1),\quad \forall\,1\leq i\leq n,
\end{equation}
where $(c_{n})_{n\geq 1}$ and $(\tilde{c_{n}})_{n\geq 1}$ are some convergent sequences. Denoting
\[
h(\gamma)=\left(\cfrac{p_{i}^{1+\gamma}q_{i}^{-\gamma}}{\sum_{j=1}^{\pmax}p_{j}^{1+\gamma}q_{j}^{-\gamma}}\right)_{1\leq i\leq \pmax},
\]
it follows, setting for any $x\in\measure$, $\auxfonc(x)=\KL{x}{p}$, that (see Remark \ref{eq:zero})
\begin{eqnarray*}
	0&=&\auxfonc\left(\alpha_{n} z^{(n)}+(1-\alpha_{n})x^{(n)}\right)-\auxfonc\left(\alpha_{n} z^{(n)}+(1-\alpha_{n})y^{(n)}\right)\\
	&=&\auxfonc\left(\alpha_{n}q+(1-\alpha_{n})h(\gamma_{n})\right)-\auxfonc\left(\alpha_{n}q+(1-\alpha_{n})h(-1-\gamma_{n})\right)\\
	&+&\nabla \auxfonc\left(\alpha_{n}q+(1-\alpha_{n})h(\gamma_{n})\right)\cdot\left(\alpha_{n}(z^{(n)}-q)+(1-\alpha_{n})(x^{(n)}-h(\gamma_{n}))\right)\\
	&-&\nabla \auxfonc\left(\alpha_{n}q+(1-\alpha_{n})h(-1-\gamma_{n})\right)\cdot\left(\alpha_{n}(z^{(n)}-q)+(1-\alpha_{n})(y^{(n)}-h(-1-\gamma_{n}))\right)+\mathcal{O}\left(\frac{1}{\delta_{n}^{2}}\right)\\
	&=&\auxfonc\left(\alpha_{n}q+(1-\alpha_{n})h(\gamma_{n})\right)-\auxfonc\left(\alpha_{n}q+(1-\alpha_{n})h(-1-\gamma_{n})\right)\\
	&+&\alpha_{n}\left(\nabla \auxfonc\left(\alpha_{n}q+(1-\alpha_{n})h(\gamma_{n})\right)-\nabla \auxfonc\left(\alpha_{n}q+(1-\alpha_{n})h(-1-\gamma_{n})\right)\right)+o\left(\frac{1}{\delta_{n}}\right),
\end{eqnarray*}
but since $\nabla \auxfonc$ exists and is continuous in a neighborhood of $q$, we get
\begin{eqnarray*}
	&&\auxfonc\left(\alpha_{n} z^{(n)}+(1-\alpha_{n})x^{(n)}\right)-\auxfonc\left(\alpha_{n} z^{(n)}+(1-\alpha_{n})y^{(n)}\right)\\
	&=&\auxfonc\left(\alpha_{n}q+(1-\alpha_{n})h(\gamma_{n})\right)-\auxfonc\left(\alpha_{n}q+(1-\alpha_{n})h(-1-\gamma_{n})\right)+o\left(\frac{1}{\delta_{n}}\right)\\
	&=&(1-\alpha_{n})\nabla \auxfonc(\alpha_{n}q)\cdot\left(h(\gamma_{n})-h(-1-\gamma_{n})\right)+o\left(\frac{1}{\delta_{n}}\right)\\
	&=&0.
\end{eqnarray*}
Finally, as $1-\alpha_{n}\sim \frac{1}{\delta_{n}}$, it follows that
\[
\nabla K(q)\cdot(h(\gamma_{\infty})-h(1-\gamma_{\infty}))=0.
\]
Now, since $\nabla K(q)=\left(\log(q_{i}/p_{i})+1\right)_{1\leq i\leq N}$, we have 
\begin{equation}
	\label{eq:RevEq1}
	\sum_{i=1}^{N}\log\left(\frac{q_{i}}{p_{i}}\right)\left(h_{i}(\gamma_{\infty})-h_{i}(-1-\gamma_{\infty})\right)=0.
\end{equation}
Since $p_{i}^{1+\gamma}q_{i}^{-\gamma}=(p_{i}/q_{i})^{-\gamma-1/2}(p_{i}q_{i})^{1/2}$, eq.\,\eqref{eq:RevEq1} can be written as
\begin{equation}
	\label{eq:rev2}
	F(-\gamma_{\infty}-1/2)=F(\gamma_{\infty}+1/2),
\end{equation}
where $F(\gamma)=F_{1}(\gamma)/F_{0}(\gamma)$ with, for any $k\in\mathbb{N}$,
\[
F_{k}(\gamma):=\sum_{i=1}^{N}\log^{k}\left(\frac{q_{i}}{p_{i}}\right)\cdot \left(\frac{q_{i}}{p_{i}}\right)^{\gamma}\sqrt{p_{i}q_{i}}.
\]
Let $\overline{\gamma_{\infty}}:=-\gamma_{\infty}-1/2$ which implies that eq.\,\eqref{eq:rev2} can be rewritten as $F(\overline{\gamma}_{\infty})=F(-\overline{\gamma}_{\infty})$. We shall show that the only solution of this equation is $\overline{\gamma}_{\infty}=0$. One can see that $F(1/2)=d_{KL}(q,p)>0$ and $F(-1/2)=-d_{KL}(p,q)<0$. Since $F'_{k}=F_{k+1}$, we obtain that 
\[
F'=\frac{F_{1}'F_{0}-F_{1}F_{0}'}{F_{0}^{2}}=\frac{F_{2}F_{0}-F_{1}^{2}}{F_{0}^2}>0,
\]
by the Cauchy-Schwarz inequality because $\log(q_{i}/p_{i})$ is not constant by $p\neq q$. Hence, $F$ is a strictly increasing function which implies that the only solution is $\overline{\gamma}_{\infty}=0$. Hence, $\gamma_{\infty}=1/2$ which finally gives, according to eqs.\,\eqref{eq:xn} and \eqref{eq:yn} that
\[
x^{\ast}_{i}=y^{\ast}_{i}=\cfrac{\sqrt{p_{i}q_{i}}}{\sum_{j=1}^{\pmax}\sqrt{p_{j}q_{j}}},\quad \forall\,1\leq i\leq \pmax.
\]
From this, it follows that
\[
\mathcal{V}(\delta_{n})\xrightarrow[n\to\infty]{}\bah{p}{q},
\]
which implies, since the sequence $(\delta_{n})_{n\geq 1}$ is arbitrary, that
\[
\lim_{\alpha\to 1}\cfrac{v(\alpha)}{1-\alpha}=\lim_{\delta\to\infty}\mathcal{V}(\delta)=\bah{p}{q}.
\]
This ends the proof.
\end{proof}

\section{Proof of Theorem \ref{thm:mainThm} in the critical and supercritical cases}
\label{sec:mainproof}
The purpose of this section is to prove Theorem \ref{thm:mainThm} when $\mean{\mu}\geq1$.

\subsection{Estimation of $\mu$}
We aim to prove that $\hatmu$ is always convergent in these cases.

\begin{prop}
	\label{prop:CVmuhat}
	If $\mean{\mu}\geq1$, then the estimators $\hatmu$ and $\hatmu^{\ast}$ respectively defined in eqs.\,\eqref{eq:estmu1} and \eqref{eq:estmuf}
	satisfy
	\[
	\hatmu\xrightarrow[h\to\infty]{}\mu
	\]
	and
	\[
	\hatmu^{\ast}\xrightarrow[h\to\infty]{}\mu
	\]
	almost surely.
	In addition, we have, for any $\ee>0$, that
	\[
	\sum_{h\geq 1}\mathbb{P}\left(\|\hatmu-\mu\|_{1}>\ee\right)<\infty.
	\]
\end{prop}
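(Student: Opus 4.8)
The plan is to estimate $\hatmu$ by peeling off the $h$ spine nodes from $\tree_h$. Using the construction recalled at the beginning of Section~\ref{s:subproof}, write $\tree_h=\spine_h\sqcup\mathcal N_h$, where $\spine_h$ is the set of the $h$ observed spine nodes (whose offspring counts are i.i.d.\ with law $\nu$) and $\mathcal N_h$ is the set of normal nodes of depth $<h$, i.e.\ the union of the grafted Galton--Watson trees $(G_{i,j})$ truncated at the relevant heights. Put $A_h(i)=\sum_{v\in\mathcal N_h}\mathbbm{1}_{\child v=i}$. Since $\card{\tree_h}\,\hatmu(i)=A_h(i)+\sum_{v\in\spine_h}\mathbbm{1}_{\child v=i}$ with the last sum lying in $[0,h]$, and $\card{\tree_h}=\card{\mathcal N_h}+h$, we get
\[
|\hatmu(i)-\mu(i)|\ \leq\ \frac{\,|A_h(i)-\mu(i)\,\card{\mathcal N_h}|+h\,}{\card{\mathcal N_h}}.
\]
So it is enough to establish: \emph{(I)} a superlinear lower bound, say $\card{\mathcal N_h}\geq h^{3/2}$ for $h$ large, with summable failure probability; and \emph{(II)} for each $i\in\{0,\dots,\pmax\}$ and $\ee>0$, $\sum_h\mathbb P(|A_h(i)-\mu(i)\card{\mathcal N_h}|>\ee\,\card{\mathcal N_h})<\infty$. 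From (I) and (II), a union bound over the finitely many values of $i$ gives the stated summable tail for $\|\hatmu-\mu\|_1$, hence $\hatmu\to\mu$ a.s.\ by Borel--Cantelli; and since $|\hatmu^{\ast}(i)-\hatmu(i)|\leq h/\card{\mathcal N_h}$, ingredient (I) alone yields $\hatmu^{\ast}\to\mu$ a.s. (Note that $\mean\mu\geq1$ forces $\pmax\geq2$, hence $\mean\nu>1$; this is precisely what makes $\mathcal N_h$ macroscopic.)

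For (II) I would use an exploration martingale. Reveal the nodes of $\tree_h$ in breadth-first order, together with their types and offspring counts, and list the normal ones as $v_1,v_2,\dots$ in that order; since a normal node carries fresh i.i.d.\ $\mu$-distributed offspring, $(\mathbbm{1}_{\child{v_k}=i}-\mu(i))_k$ are martingale differences bounded by $1$ for the exploration filtration, $\card{\mathcal N_h}$ is a stopping time, and $A_h(i)-\mu(i)\card{\mathcal N_h}=M_{\card{\mathcal N_h}}$ with $M_n=\sum_{k\leq n}(\mathbbm{1}_{\child{v_k}=i}-\mu(i))$. Azuma's inequality together with Doob's maximal inequality applied on dyadic blocks gives $\mathbb P(\exists\,n\geq h^{3/2}:|M_n|\geq\tfrac\ee2 n)\leq C\exp(-c\ee^{2}h^{3/2})$, which combined with (I) proves (II).

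The heart of the matter is (I), and the argument splits according to the growth regime. If $\mean\mu>1$, let $q>0$ be the survival probability of a $G_{i,j}$: the number of normal subtrees grafted in the first $\lfloor h/2\rfloor$ spine levels is a sum of i.i.d.\ bounded variables, hence at least $(\mean\nu-1)h/4$ outside an exponentially small event (Hoeffding); each of them is observed up to generation $\ell\geq h/3$, and by the Kesten--Stigum theorem its size at generation $\ell$ exceeds $\mean\mu^{\ell/2}$ with probability $\geq q/2$ for $\ell$ large, so with probability $1-e^{-ch}$ at least one grafted subtree has at least $\mean\mu^{h/6}\geq h^{3/2}$ nodes. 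If $\mean\mu=1$, the $G_{i,j}$ are critical with finite variance, and a classical estimate shows that such a tree observed up to height $\ell$ has at least $s$ nodes with probability $\gtrsim s^{-1/2}$ as soon as $\ell\gtrsim\sqrt s$ (tail of the total progeny, together with the fact that a critical tree of size $s$ has height of order $\sqrt s$). I would then partition the first $h/2$ spine levels into $\asymp\sqrt h$ blocks of $\asymp\sqrt h$ consecutive levels, which are independent; each block carries $\asymp\sqrt h$ grafted subtrees observed high enough (controlled by Hoeffding for the number of subtrees per block, with a union bound over the blocks), so with probability $\geq\tfrac12$ some subtree in it reaches $h$ nodes; a Chernoff bound then shows that outside a set of probability $\leq e^{-c\sqrt h}$ at least $\asymp\sqrt h$ blocks contain such a subtree, whence $\card{\mathcal N_h}\gtrsim h^{3/2}$.

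The main obstacle is exactly this critical-case lower bound: although $\mathbb E[\card{\mathcal N_h}]\asymp h^{2}$, its standard deviation is of the same order, so a second-moment argument only bounds $\mathbb P(\card{\mathcal N_h}\geq c h^{2})$ away from $0$ and is not summable; one has to exploit the independence of the subtrees grafted on disjoint stretches of the spine, as in the block decomposition above. Once (I) is in hand, step (II) and the passage to $\hatmu^{\ast}$ are routine.
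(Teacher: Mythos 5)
Your argument is correct in outline and reaches the same conclusion, but it takes a genuinely different route from the paper on both key technical points. For the concentration of the normal-node empirical measure, the paper observes that, conditionally on $\card{T}_{h}$, the aggregated offspring counts of the normal nodes form a multinomial vector with parameters $(\card{T}_{h}-h,\mu)$ and then runs a method-of-types / Sanov-type bound (the mechanism of Lemma~\ref{lem:pertMultEstimate}); your exploration-martingale plus Azuma--Doob argument over dyadic blocks is an equally valid substitute and avoids conditioning on the tree size. The more substantial divergence is in the lower bound on the tree size. The paper only needs the \emph{linear} bound $\sum_{h}\mathbb{P}(\card{T_{h}}\leq c_{\ee}h)<\infty$ for an $\ee$-dependent constant $c_{\ee}$ (because it measures the spine's contamination through the convex split $\hatmu=(1-\alpha_{h})\overline{X}_{h}+\alpha_{h}\overline{\spine_{h}}$ with $\alpha_{h}=h/\card{T}_{h}$, and only needs $\alpha_{h}$ smaller than a constant), and it obtains this from Pakes' explicit generating function for Galton--Watson trees with immigration via a ratio test. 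You instead prove the stronger superlinear bound $\card{\mathcal{N}_{h}}\gtrsim h^{3/2}$ by a block decomposition of the spine, using Kesten--Stigum in the supercritical case and Kolmogorov/Yaglom-type survival and size estimates for critical trees; your diagnosis that a second-moment argument fails in the critical case (the standard deviation of $\card{T_{h}}$ is of the same order $h^{2}$ as its mean) is accurate and is precisely why the independence across disjoint stretches of the spine must be exploited. The trade-off is that your route leans on classical but unproved facts about critical Galton--Watson trees (the probability $\gtrsim s^{-1/2}$ of accumulating $s$ nodes within height $O(\sqrt{s})$), where the paper leans on an explicit generating-function identity; both are legitimate, and your superlinear bound is stronger than what is strictly needed (a linear bound with a large enough constant would already make your Azuma step summable). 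The passage from $\hatmu$ to $\hatmu^{\ast}$ is handled the same way in both proofs.
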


\noindent
Note that the result of Proposition \ref{prop:CVmuhat} is rather intuitive. Indeed, when the normal Galton-Watson subtrees are supercritical, the sample used in eq.\,\eqref{eq:estmu1} or in eq.\,\eqref{eq:estmuf} is a perturbation of size $h$ of a $\mu$ i.i.d.\ sample whose size is of order $m(\mu)^{h}$. Therefore, our primary concern is ensuring that this perturbation is not sufficiently large to hinder the estimation process.

\begin{proof}
	Recall that the spinal-structured tree $T$ can be decomposed as the grafting of a sequence $(G_{i,j})_{i,j\geq 1}$ of i.i.d.\ Galton-Watson trees with common birth distribution $\mu$ on the spine. For each of these trees, let us write $\overline{X}_{i,j,h}$, for $i,j,h\in\mathbb{N}$, the random vector defined by
	\[
	\overline{X}_{i,j,h}(k)=\sum_{\{v\in G_{i,j}\,:\,\mathcal{D}(v)< h\}}\mathbbm{1}_{\child{v}=k},\quad 0\leq k \leq N.
	\]
Let us highlight that $i$ corresponds to generations in the spinal-structured tree whereas $j$ corresponds to indices of the offsprings in a given generation. In addition, it is known (see for example \cite{devroye}) that the law of $\overline{X}_{i,j,h}$ conditional on $\card{\{v\in G_{i,j}\,:\,\mathcal{D}(v)< h\}}$ is multinomial with parameters $\dist$ and $\card{\{v\in G_{i,j}\,:\,\mathcal{D}(v)< h\}}$. From this, and from the independence of the $G_{i,j}$, it follows that the random variable $\overline{X}_h$ defined by
	\[
	\overline{X}_{h}=\sum_{i=1}^{h}\sum_{j=1}^{\S{i}-1}\overline{X}_{i,j,h-i}
	\]
	is, conditionally on $\card{T_{h}}$, a multinomial random variable with parameters $\card{T_{h}}-h$ and $\dist$ independent of $\spine_{h}$. Now, denoting by $\overline{\spine_{h}}$ the empirical distribution associated with $\spine_{h}$, that is
	\[
	\overline{\spine_{h}}(k)=\frac{1}{h}\sum_{i=1}^{h}\mathbbm{1}_{S_{i}=k},\quad \forall\,k\in\{1,\ldots,\pmax\},
	\]
	where $S_{1},\ldots, S_{h}$ denote the numbers of special children of the individuals of the spine,
	it is easily seen that
	\[
	\hatmu=\left(1-\frac{h}{\card{T}_{h}}\right)\overline{X}_{h}+\frac{h}{\card{T}_{h}}\overline{\spine_{h}}.
	\]
	Now, taking $\ee>0$, Pinsker's inequality entails that
	\[
	\mathbb{P}\left(\|\hatmu-\mu\|_{1}>\sqrt{\ee/2}\right)\leq\mathbb{P}\left(\KL{\hatmu}{\mu}>\ee\right).
	\]
	The convexity of the Kullback-Leibler divergence gives, with $\alpha_{h}:=\cfrac{h}{\card{T}_{h}}$, that
	\begin{multline*}
		\mathbb{P}\left(\|\hatmu-\mu\|_{1}>\sqrt{\ee/2}\right)\leq	\mathbb{P}\left((1-\alpha_{h})\KL{\overline{X}_{h}}{\mu}+\alpha_{h}\KL{\overline{\spine_{h}}}{\mu}>\ee\right)\\\leq \mathbb{P}\left((1-\alpha_{h})\KL{\overline{X}_{h}}{\mu}+\alpha_{h}\KL{\overline{\spine_{h}}}{\mu}>\ee,\ \alpha_{h}\KL{\nu}{\mu}<\frac{\ee}{2}\right)+\mathbb{P}\left( \alpha_{h}\KL{\nu}{\mu}\geq \frac{\ee}{2}\right).
	\end{multline*}
	Next, using the method of Lemma \ref{lem:pertMultEstimate}, one can show that for any $\delta>0$ there is a constant $C>0$ such that
	\[
	\mathbb{P}\left((1-\alpha_{h})\KL{\overline{X}_{h}}{\mu}+\alpha_{h}\KL{\overline{\spine_{h}}}{\mu}>\ee,\ \alpha_{h}\KL{\mu}{\nu}<\frac{\ee}{2}\Bigg| \card{T}_{h}\right)\leq C \exp\left(-\card{T_{h}}(\lopt\left(\alpha_{h}\right)-\delta)\right),
	\]
	with
	\begin{eqnarray*}
		&&\lopt(\alpha)\nonumber\\
		&:=&\!\!\!\!\!\!\!\!\inf_{(x,y)\in \measure^{2}}\!\left\{(1-\alpha)\KL{x}{\mu}+(1-\alpha)\KL{y}{\nu}\Bigg|(1-\alpha)\KL{x}{\mu}+\alpha\KL{y}{\mu}>\ee,\alpha \KL{\nu}{\mu}<\frac{\ee}{2}\right\} \nonumber \\
		&\geq&\!\!\!\!\!\!\!\!\inf_{\substack{(x,y)\in \measure^{2}\\ \alpha\in[0,1]}}\!\left\{(1-\alpha)\KL{x}{\mu}+(1-\alpha)\KL{y}{\nu}\Bigg|(1-\alpha)\KL{x}{\mu}+\alpha\KL{y}{\mu}\geq \ee,\alpha \KL{\nu}{\mu}\leq\frac{\ee}{2} \right\}.
		\label{eq:newOptim}
	\end{eqnarray*}
	As the feasible set of the r.h.s. is obviously compact, there exists a feasible point $(x^{\ast},y^{\ast},\alpha^{\ast})$ such that
	\begin{eqnarray*}
		&&(1-\alpha^{\ast})\KL{x^{\ast}}{\mu}+(1-\alpha^{\ast})\KL{y^{\ast}}{\nu}\\
		&=&\!\!\!\!\!\!\!\!\inf_{\substack{(x,y)\in \measure^{2}\\\alpha\in[0,1]}}\!\left\{(1-\alpha)\KL{x}{\mu}+(1-\alpha)\KL{y}{\nu}\Bigg|(1-\alpha)\KL{x}{\mu}+\alpha\KL{y}{\mu}\geq \ee,\alpha \KL{\nu}{\mu}\leq\frac{\ee}{2} \right\}.
	\end{eqnarray*}
	Assume that $(1-\alpha^{\ast})\KL{x^{\ast}}{\mu}+(1-\alpha^{\ast})\KL{y^{\ast}}{\nu}=0$, which readily implies that $x^{\ast}=\mu$ and $y^{\ast}=\nu$, but it is easily seen that for any $\alpha\in[0,1]$, the point $(\mu,\nu,\alpha)$ is not feasible. Hence, there exists a constant $\widetilde{C}>0$ independent of $\alpha$ such that
	\[
	\lopt(\alpha)\geq \widetilde{C}.
	\]
	Choosing $\delta<\widetilde{C}$, we get
\begin{eqnarray*}
	\mathbb{P}\left((1-\alpha_{h})\KL{\overline{X}_{h}}{\mu}+\alpha_{h}\KL{\overline{\spine_{h}}}{\mu}>\ee,\ \alpha_{h}\KL{\mu}{\nu}<\frac{\ee}{2}\Big| \card{T}_{h}\right) &\leq& C \exp\left(-\card{T}_{h}(\widetilde{C}-\delta)\right)\\
	&\leq& e^{-h(\widetilde{C}-\delta)},
	\end{eqnarray*}
	since $\card{T}_{h}\geq h$. Then,
	\[
	\mathbb{P}\left(\|\hatmu-\mu\|_{1}>\sqrt{\ee/2}\right)\leq e^{-hC}+\mathbb{P}\left( \alpha_{h}\KL{\mu}{\nu}\geq \frac{\ee}{2}\right).
	\]
	To ensure that the r.h.s.\ of the previous inequality is summable for $h\geq 1$, it thus remains to check that
	\[
	\sum_{h\geq 1}\mathbb{P}\left( \alpha_{h}\KL{\mu}{\nu}\geq \frac{\ee}{2}\right)<\infty.
	\]
	
\noindent
From this point, we assume that the birth distribution is critical. The supercritical is considered below.
So, to treat this, we use that the spinal-structured tree (excluding the spine) can be interpreted as a Galton-Watson tree with immigration with birth distribution $\mu$ and immigration $\tilde{\nu}$ given by $\tilde{\nu}_{k}=\nu_{k+1}$ for $k\geq 0$. It is known (see \cite{pakes}) that the generating function of $\card{T}_{h}$ is given by
\begin{equation}
\label{eq:generating}
\mathbb{E}\left[x^{\card{T}_{h}}\right]=x^{h}\prod_{i=0}^{h-1}B(g_{i}(x)),
\end{equation}
where $B:[0,1]\mapsto \mathbb{R}$ is the generating function of the law  $\tilde{\nu}$ and $g_{i}$ is the generating function of the total progeny of a Galton-Watson tree with law $\mu$ up to generation $i$, that is
\[
g_{i}(x)=\mathbb{E}\left[x^{\sum_{j=0}^{i}Z_{j}}\right], \quad \forall\,x\in[0,1],
\]
where $\left(Z_{i}\right)_{i\geq 0}$ is a standard Galton-Watson process with birth distribution $\mu$.
Now, denote \[
v_{h}=\mathbb{E}\left[x^{\frac{\card{T}_{h}}{h}}\right], \quad \forall\,h\geq 1.
\]
We then have that (the regularity of $B$ and $g_{i}$ is easily checked)
\[
\log\left(v_{h}\right)=\log\left(B(g_{i}(\theta_{h}))\right)+\left(\theta_{h+1}-\theta_{h}\right)\cfrac{g'_{i}(\eta_{h})B'(g_{i}(\eta_{h}))}{B(g_{i}(\eta_{h}))}
\]
with
\begin{equation}\label{eq:con}\theta_{h}=\exp\left(\frac{\log(x)}{h}\right)\text{ and }\eta_{h}\in(\theta_{h},\theta_{h+1}),\quad\forall\,h\geq 1.
\end{equation}
Hence, eq.\,\eqref{eq:generating} entails that
$$
\log\left(\cfrac{v_{h+1}}{v_{h}}\right)=\log(x)+\left(\theta_{h+1}-\theta_{h}\right)\sum_{i=0}^{h-1}\cfrac{g'_{i}(\eta_{h})B'(g_{i}(\eta_{h}))}{B(g_{i}(\eta_{h}))}+\log\left(B(g_{h}(\theta_{h+1}))\right).
$$
Now, as the sequence $g_{i}$ is monotonically decreasing and converging to some proper generating function $g$ (only in the critical case, see again \cite{pakes}),we have that
\[
\cfrac{g'_{i}(\eta_{h} )B'(g_{i}(\eta_{h}))}{B(g_{i}(\eta_{h}))}\leq  \cfrac{\mean{\tilde{\nu}}g_{i}'(\eta_{h})}{B(g(\eta_{h}))}.
\]
Now, as for $x\in(0,1)$,
\[
g_{i}'(x)=\mathbb{E}\left[\left(\sum_{j=0}^{i}Z_{j}\right)x^{\sum_{j=0}^{i}Z_{j}-1}\right],
\]
we have
\[
g_{i}'(x)\leq- \frac{e^{-1}}{x\log(x)}.
\]
It follows that
\begin{eqnarray*}
\limsup_{h\to\infty}\log\left(\cfrac{v_{h+1}}{v_{h}}\right)&\leq&\log(x)-\limsup_{h\to\infty}\left(\left(\theta_{h+1}-\theta_{h}\right)\frac{\mean{\tilde{\nu}}}{B(g(\eta_{h}))}\frac{he^{-1}}{\eta_{h}\log(\eta_{h})}+\log\left(B(g_{h}(\theta_{h+1}))\right)\right)\\
&=&\log(x)+\mean{\tilde{\nu}}e^{-1},
\end{eqnarray*}
where we used \eqref{eq:con} to get that
\[
\lim\limits_{h\to\infty}h\eta_{h}\log(\eta_{h})=\log(x).
\]
Now, as $x$ is arbitrary, it can always be chosen such that $\log(x)+\mean{\tilde{\nu}}e^{-1}<0$, which, by the ratio test, implies that, for such $x$,
\[
\sum_{h\geq 1}\mathbb{E}\left[x^{\frac{\card{T}_{h}}{h}}\right]<\infty.
\]
Finally, we have
\[
\mathbb{E}\left[x^{\frac{\card{T}_{h}}{h}}\right]\geq \mathbb{E}\left[x^{\frac{\card{T_{h}}}{h}}\mathbbm{1}_{\frac{\card{T_{h}}}{h}\leq c_{\ee}}\right]\geq x^{c_{\ee}}\mathbb{P}\left(\frac{\card{T_{h}}}{h}\leq c_{\ee}\right),
\]
where
$$c_{\ee}=\frac{2\KL{\mu}{\nu}}{\ee}.$$
From this, it follows that
\begin{equation}
\label{eq:summability}
\sum_{h\geq 1}\mathbb{P}\left(\frac{\card{T_{h}}}{h}\leq \frac{2\KL{\mu}{\nu}}{\ee}\right)<\infty.
\end{equation}
We now consider the case where $T_{h}$ is supercritical. A possible approach is to consider a coupling between the supercritical tree $\tree_{h}$ and a critical tree $\widetilde{\tree}_{h}$ using a thinning procedure, in order to get the estimate
\begin{equation}
\label{eq:revision}
\mathbb{E}\left[x^{\frac{\card{T}_{h}}{h}}\right]\leq \mathbb{E}\left[x^{\frac{\card{\widetilde{T}}_{h}}{h}}\right].
\end{equation}
Indeed, assume now $T_{h}$ to be supercritical. We consider a thinning of $T_{h}$ where each normal individual (and its decent) is killed independently with probability $p$. This induces a new tree $\widetilde{T}_{h}$ with new normal birth distribution $\widetilde{\mu}$ such that $\mean{\widetilde{\mu}}=p\mean{\mu}$. So taking $p=\mean{\mu}^{-1}$ implies that $\widetilde{T}_{h}$ is a spinal-structured tree with critical birth distribution. Hence, from the first part of the proof, we have that
\[
\sum_{h\geq 1}\mathbb{E}\left[x^{\frac{\card{\widetilde{T}}_{h}}{h}}\right]<\infty,
\]
for $x$ such that $\log(x)+\mean{\tilde{\nu}}e^{-1}<0$. But the thinning procedure used for constructing $\widetilde{T}_{h}$ directly implies that $\card{\widetilde{T}_{h}}\leq \card{T_{h}}$ almost surely, which gives \eqref{eq:revision}. This implies that 
\[
\sum_{h\geq 1}\mathbb{E}\left[x^{\frac{\card{T}_{h}}{h}}\right]<\infty.
\]
The remaining of the proof is the same as for the critical case. This ends the proof of the almost sure convergence of $\hatmu$. Concerning the almost sure convergence of $\hatmu^{\ast}$, first note that \eqref{eq:summability} implies that
\begin{equation}
\label{eq:CValphah}
\frac{h}{\card{T}_{h}}\xrightarrow[h\to\infty]{}0
\end{equation}
almost surely. Now, take any $\mathbf{s}\in\spineset{h}$, where we recall that $\spineset{h}$ is the set of spine candidates defined in Section \ref{s:algo:spine}, and consider the estimator $\mu^{\mathbf{s}}_{h}$ given by
\[
\mu^{\mathbf{s}}_{h}(i)=\frac{1}{\card{T}_{h}-h}\sum_{v\in\tree_h\setminus\mathbf{s}}\mathbbm{1}_{\child{v}=i},\quad \forall\,0\leq i \leq \pmax.
\]
 Thus,
\[
\left|\hatmu(i)-\hatmu^{\mathbf{s}}(i)\right|\leq \hatmu\left|1-\frac{\card{T}_{h}}{\card{T}_{h}-h}\right|+\frac{\card{\mathbf{s}}}{\card{T}_{h}-h}=\hatmu\left|1-\frac{\card{T}_{h}}{\card{T}_{h}-h}\right|+\frac{h}{\card{T}_{h}-h},
\]
and the result follows from \eqref{eq:CValphah} and the almost sure convergence of $\hatmu$.
\end{proof}

\subsection{Spine recovery}

Now, to go further in the proof of Theorem \ref{thm:mainThm}, we need to understand if we can recover enough of the spine in order to estimate $f$. To do so, the idea is to show that the Ugly Duckling $\hatspine$ contains a proportion of order $h$ of special individuals. Before this result, we need some preliminary lemmas. The first one concerns Kullback-Leibler divergence.
\begin{lem} \label{lem:LipKL}
Let $p\in\measure$ such that $\m{p}:=\inf_{0\leq i \leq \pmax}p_{i}>0$ and $\mean{p}\geq 1$. Then, there exists $\ee_{1}>0$ such that, for any $q,\hat{p}\in\measure$, we have
	\[
	\|p-\hat{p}\|_{1}<\ee_{1}\\
	\Longrightarrow \left|\KL{q}{\B{p}}-\KL{q}{\B{\hat{p}}}\right|\leq C_{1}\|p-\hat{p}\|_{1},
	\]
	where $C_{1}$ only depends on $p$.
\end{lem}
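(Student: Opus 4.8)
The plan is to reduce the statement to the elementary fact that $\log$ is Lipschitz on any interval bounded away from $0$, after expanding the difference of the two Kullback--Leibler divergences in closed form.

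First I would fix the radius. Set $\ee_1=\min\bigl(\m p/2,\ 1/(2\pmax)\bigr)$, a quantity depending only on $p$ (recall that $\pmax$ is a fixed global constant). If $\|p-\hat p\|_1<\ee_1$, then $|\hat p_i-p_i|\le\|p-\hat p\|_1<\m p/2$ for every $i$, hence $\hat p_i\ge\m p/2>0$; moreover $\mean{\hat p}\ge\mean p-\pmax\|p-\hat p\|_1\ge 1-\pmax\ee_1\ge 1/2$ since $\mean p\ge1$. Consequently $\B{\hat p}$, like $\B p$, is a probability measure with full support $\{1,\dots,\pmax\}$, so that for every $q$ supported on $\{1,\dots,\pmax\}$ — the only relevant case, since otherwise both divergences equal $+\infty$ — the quantities $\KL{q}{\B p}$ and $\KL{q}{\B{\hat p}}$ are finite.

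Next, using $\B p(i)=ip_i/\mean p$ and $\sum_{i=1}^{\pmax}q_i=1$, I would write
\[
\KL{q}{\B p}-\KL{q}{\B{\hat p}}=\sum_{i=1}^{\pmax}q_i\log\frac{\B{\hat p}(i)}{\B p(i)}=\sum_{i=1}^{\pmax}q_i\bigl(\log\hat p_i-\log p_i\bigr)-\bigl(\log\mean{\hat p}-\log\mean p\bigr).
\]
By the previous step $\hat p_i,p_i\in[\m p/2,1]$ and $\mean{\hat p},\mean p\in[1/2,\pmax]$, and $\log$ is $1/a$-Lipschitz on $[a,\infty)$; thus $|\log\hat p_i-\log p_i|\le(2/\m p)|\hat p_i-p_i|$ and $|\log\mean{\hat p}-\log\mean p|\le 2|\mean{\hat p}-\mean p|\le 2\pmax\|p-\hat p\|_1$. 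Plugging these into the triangle inequality and using $\sum_i q_i|\hat p_i-p_i|\le\max_i|\hat p_i-p_i|\le\|p-\hat p\|_1$ gives
\[
\bigl|\KL{q}{\B p}-\KL{q}{\B{\hat p}}\bigr|\le\Bigl(\frac{2}{\m p}+2\pmax\Bigr)\|p-\hat p\|_1,
\]
so the claim holds with $C_1=2/\m p+2\pmax$, which depends only on $p$.

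There is no genuine obstacle: the argument is a direct computation. The only points needing care are (i) choosing $\ee_1$ small enough that size-biasing preserves full support and keeps the mean bounded below — this is exactly where the hypotheses $\m p>0$ and $\mean p\ge1$ are used — and (ii) observing that the measures $q$ arising in the application (empirical distributions of surviving lineages) are automatically supported on $\{1,\dots,\pmax\}$, so that no degenerate $q_0\log(q_0/0)$ term appears in the identity above.
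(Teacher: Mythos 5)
Your proof is correct, and it reaches the same conclusion by a slightly different decomposition than the paper. The paper's argument is a two-step composition: it first bounds $\nabla_{2}\KL{q}{\cdot}$ to get that $\KL{q}{\cdot}$ is Lipschitz (uniformly in $q$) on a neighbourhood of $\B{p}$ staying away from the boundary, and then separately shows that the bias operator $\B{}$ is $2\pmax$-Lipschitz on $\{\mean{p}>1/2\}$, chaining the two constants. You instead exploit the explicit algebraic form of the bias to write
\[
\KL{q}{\B{p}}-\KL{q}{\B{\hat p}}=\sum_{i}q_{i}\bigl(\log \hat p_{i}-\log p_{i}\bigr)-\bigl(\log \mean{\hat p}-\log \mean{p}\bigr),
\]
which separates the coordinatewise perturbation from the normalization and reduces everything to the Lipschitz continuity of $\log$ away from $0$; this avoids any gradient computation and yields an explicit constant $C_{1}=2/\m{p}+2\pmax$. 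Both arguments use the hypotheses in the same way ($\m{p}>0$ to stay off the boundary, $\mean{p}\geq1$ to keep the normalizing mean bounded below). A small point in your favour: you note explicitly that the bound is only meaningful for $q$ supported on $\{1,\dots,\pmax\}$, since $\B{p}(0)=0$ makes both divergences infinite otherwise; the paper's gradient bound, applied at the point $\B{p}$ whose zeroth coordinate vanishes, tacitly requires the same restriction.
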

\begin{proof}
The proof has been deferred into Appendix~\ref{app:proof:LipKL}.
\end{proof}

\noindent
The following lemma concerns large deviations on the probability to distinguish two samples.
\begin{lem} \label{lem:pertMultEstimate}
Let $p$ and $q$ in $\measure$. Let $R$, $M$ and $S$ be three independent multinomial random variables with respective parameters $(h-n,p)$, $(h-n,q)$ and $(n,q)$, for some integers $h$ and $n$ such that $h>n$. Then, for any $\delta>0$ there exists a constant $C>0$ such that
	\[
	\mathbb{P}\left(\KL{M+S}{p}+\ee>\KL{R+S}{p}\right)\leq C \exp\left\{h(-(1-\alpha)\AH{p}{q}+\ee-\delta) \right\},
	\]
	where $\mathfrak{D}$ is the divergence defined in eq.\,\eqref{eq:AHDivergence}. In addition the constant $C$ only depends on $\pmax$ and $\delta$.
\end{lem}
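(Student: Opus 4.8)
The plan is to recognize the event $\{\KL{M+S}{p}+\ee > \KL{R+S}{p}\}$ as a moderate/large deviation event for the triple of empirical measures $(\overline{R},\overline{M},\overline{S})$, and to apply Sanov's theorem together with the contraction principle. First I would introduce the normalized empirical measures: write $\alpha = n/h$, let $x = \overline{R} \in \measure$ be the empirical law of the $(h-n,p)$ sample, $y = \overline{M}$ the empirical law of the $(h-n,q)$ sample, and $z = \overline{S}$ the empirical law of the $(n,q)$ sample. Then $\frac{1}{h}(R+S)$ has empirical measure $(1-\alpha)x + \alpha z$ and $\frac{1}{h}(M+S)$ has empirical measure $(1-\alpha)y+\alpha z$ (up to the exact combinatorial bookkeeping of sample sizes $h-n$ vs.\ $n$; I would be slightly careful that $\KL{\cdot}{p}$ is evaluated at the normalized vectors, so the event is exactly $H_{\alpha,\ee}(x,y,z)\ge 0$ in the notation of \eqref{eqp:hdef}). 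By independence of $R$, $M$, $S$, Sanov's theorem for finite alphabets gives that the joint law of $(x,y,z)$ satisfies an LDP at speed $h$ with rate function $(1-\alpha)\KL{x}{p} + (1-\alpha)\KL{y}{q} + \alpha\KL{z}{q}$ — exactly the objective $f_\alpha$ of problem \eqref{eq:optimproblem}. Applying the LDP upper bound to the closed set $\{H_{\alpha,\ee}(x,y,z)\ge 0\}$ yields
\[
\limsup_{h\to\infty} \frac{1}{h}\log \mathbb{P}\left(\KL{M+S}{p}+\ee > \KL{R+S}{p}\right) \le -\, V(\alpha,\ee),
\]
with $V$ the value function of \eqref{eq:valueF}. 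This is already the right shape; it remains to (i) convert the asymptotic statement into a uniform-in-$h$ bound with an explicit constant $C=C(\pmax,\delta)$, and (ii) replace $V(\alpha,\ee)$ by $(1-\alpha)\AH{p}{q}-\ee$ up to the error $\delta$.

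For step (i), rather than invoking the abstract LDP I would use the exact finite-alphabet bound: the number of possible empirical-measure triples is polynomial in $h$, bounded by $(h+1)^{3(\pmax+1)}$, and the probability of each type class is at most $\exp(-h f_\alpha(x,y,z))$ by the standard ``method of types'' estimate (e.g.\ the bound $\mathbb{P}(\overline{R}=x)\le \exp(-(h-n)\KL{x}{p})$ and similarly for $M$, $S$, then multiply). Summing over the at most $(h+1)^{3(\pmax+1)}$ feasible types lying in $\{H_{\alpha,\ee}\ge 0\}$ gives
\[
\mathbb{P}\left(\KL{M+S}{p}+\ee > \KL{R+S}{p}\right) \le (h+1)^{3(\pmax+1)} \exp\left(-h\,\inf\nolimits_{H_{\alpha,\ee}\ge 0} f_\alpha\right) = (h+1)^{3(\pmax+1)} e^{-h V(\alpha,\ee)}.
\]
The polynomial prefactor is absorbed into the $-\delta$ slack by choosing, for the given $\delta$, a constant $C=C(\pmax,\delta)$ large enough that $(h+1)^{3(\pmax+1)} \le C e^{h\delta/2}$ for all $h\ge 1$; note this $C$ depends only on $\pmax$ and $\delta$, as required. (One subtlety: if $p_i=0$ or $q_i=0$ for some $i$ the types must be restricted to the support, but under the paper's standing assumptions $\mu,\nu$ have full support on $\{0,\dots,N\}$ resp.\ $\{1,\dots,N\}$, so this is either harmless or handled by the convention $\KL{x}{p}=+\infty$ off-support.)

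For step (ii), I would invoke Theorem~\ref{thm:optim}: it gives continuity of $V$ and, crucially, that for any $\rho\in(0,1)$ there is $\ee^\ast>0$ with $V(\alpha,\ee)\ge v(\alpha)-\rho$ uniformly in $\alpha\in[0,1]$ and $\ee\in[0,\ee^\ast]$, while $v$ is defined through problem \eqref{eq:optimproblem} at $\ee=0$ whose value function is exactly $\AH{p}{q}$ when rescaled — more precisely, comparing \eqref{eq:valueF} at $\ee=0$ with \eqref{eq:AHDivergence}, one reads off $v(\alpha) = (1-\alpha)\,\AH{p}{q}$ after the change of variable $\delta = \alpha/(1-\alpha)$ used in Step~3 of the proof of Theorem~\ref{thm:optim}. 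Hence $V(\alpha,\ee) \ge (1-\alpha)\AH{p}{q} - \rho \ge (1-\alpha)\AH{p}{q} - \ee - (\rho - \ee)$; choosing $\rho = \delta/2$ and then combining with the $e^{h\delta/2}$ slack from the prefactor gives the claimed bound $C\exp\{h(-(1-\alpha)\AH{p}{q} + \ee - \delta)\}$.

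The main obstacle I anticipate is the bookkeeping in step (ii): making the identification $v(\alpha)=(1-\alpha)\AH{p}{q}$ precise requires carefully matching the constraint $H_{\alpha,0}(x,y,z)\ge0$ with the constraint in \eqref{eq:AHDivergence} after the substitution $\delta z + x \leftrightarrow (1-\alpha)x+\alpha z$ (dividing through by $1-\alpha$, the ``$\delta KL(z,q)$'' term and the ``$/(\delta+1)$'' inside the $H$-constraint line up exactly), and then transferring the ``$\ee$-perturbation to within $\delta$'' statement of Theorem~\ref{thm:optim} — which is stated for $v(\alpha)$, a per-$\alpha$ quantity — into a bound uniform over the possibly-$h$-dependent $\alpha=\alpha_h=n/h$. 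The uniformity over $\alpha\in[0,1]$ in Theorem~\ref{thm:optim} is exactly what makes this work, so the argument goes through, but this is the step where the constants must be tracked with care. The method-of-types estimate and the polynomial-prefactor absorption are routine.
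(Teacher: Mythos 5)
Your proposal is correct and follows essentially the same route as the paper: the paper's proof is exactly your step (i), namely the exact method-of-types computation (each multinomial type class has probability at most $\exp(-h f_\alpha)$, the number of types is polynomial in $h$ and is absorbed into the $\delta$ slack), landing on the bound $C\exp(-h(V(\alpha,\ee)-\delta))$ with $V$ the value function of \eqref{eq:valueF}. Your step (ii) goes slightly beyond what the paper writes in this proof (the paper stops at $V(\alpha,\ee)$ and performs the comparison with $v(\alpha)$ and $\AH{p}{q}$ only later, via Theorem~\ref{thm:optim}, in the main argument); note only that the identification should be the inequality $v(\alpha)\geq(1-\alpha)\AH{p}{q}$ rather than an equality, since \eqref{eq:AHDivergence} carries an extra infimum over $\delta$ — but that is precisely the direction your upper bound requires.
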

\begin{proof}
The proof has been deferred into Appendix~\ref{app:proof:pertMultEstimate}.
\end{proof}

\noindent
We can finally come to the proof of Theorem \ref{thm:mainThm}.
\begin{proof}[Proof of Theorem \ref{thm:mainThm}, critical and supercritical cases]
Let us recall that, for any element $\mathbf{s}\in\spineset{h}$, $\bar{\mathbf{s}}$ is defined as the random vector given by
	\[
	\bar{\mathbf{s}}_{i}=\frac{1}{h}\sum_{v\in\mathbf{s}}\mathbbm{1}_{\child{v}=i},\quad \forall\,0\leq i \leq \pmax,
	\]
that is the empirical distribution of the numbers of children along $\mathbf{s}$. In addition, for any non-negative integer $l\leq h$, we denote by $\spineset{h}^{l}$ the subset of $\spineset{h}$ such that
	\[
	\spineset{h}^{l}=\left\{\mathbf{s}\in\spineset{h} \mid \card{(\mathbf{s}\cap \spine)}\leq l \right\}.
	\]
From the definition of $\estS{h}$, we have that, for any non-negative integer $l$,
	\begin{eqnarray*}
		\mathbb{P}\left(\#\left(\widehat{\mathcal{S}}_{h}\cap\mathcal{S} \right)\leq l \right)&=&\mathbb{P}\left(\max_{\mathbf{s}\in\spineset{h}^{l}}\KL{\bar{\mathbf{s}}}{\B{\hatmu}}>\max_{\mathbf{s}\in\spineset{h}\setminus\spineset{h}^{k}}\KL{\bar{\mathbf{s}}}{\B{\hatmu}}\right)\\
		&\leq& \mathbb{P}\left(\max_{\mathbf{s}\in\spineset{h}^{l}}\KL{\bar{\mathbf{s}}}{\B{\hatmu}}>\KL{\bar{\spine}}{\B{\hatmu}}\right)\\
		&=&\mathbb{P}\left(\bigcup_{\mathbf{s}\in\spineset{h}^{l}}\left\{\KL{\bar{\mathbf{s}}}{\B{\hatmu}}>\KL{\bar{\spine}}{\B{\hatmu}}\right\}\right).\\
	\end{eqnarray*}
Now, let $\ee>0$ such that $\ee<\ee_1$ where $\ee_1$ is defined in Lemma \ref{lem:LipKL}. Thus, we have according to Lemma \ref{lem:LipKL} that
	\[
	\mathbb{P}\left(\bigcup_{\mathbf{s}\in \spineset{h}^{l}}\left\{\KL{\bar{\mathbf{s}}}{\B{\hatmu}}>\KL{\bar{\spine}}{\B{\hatmu}}\right\},\ \|\hatmu-\mu\|_{1}\leq C_{1}^{-1}\ee\right)\leq \mathbb{P}\left(\bigcup_{\mathbf{s}\in \spineset{h}^{l}}\left\{\KL{\bar{\mathbf{s}}}{\B{p}}+\ee>\KL{\bar{\spine}}{\B{p}}\right\}\right),
	\]
where $C_{1}$ is also defined in Lemma \ref{lem:LipKL}.
Hence,  following the notation of eq.\,\eqref{eq:manytoone}, we have that
\begin{eqnarray*}
			&&\mathbb{P}\left(\#\left(\widehat{\mathcal{S}}_{h}\cap\mathcal{S} \right)\leq l \right) \nonumber \\
			&\leq&\mathbb{E}\left[\sum_{i=0}^{l}\sum_{j=1}^{S_i}\sum_{\{u\in G_{i,j}\,:\,\mathcal{D}(u)=i\}}\mathbbm{1}_{\KL{(1-\frac{i}{h})\overline{\mathcal{A}(u)}+\frac{i}{h}\overline{\spine_{i}}}{\B p}+\ee>\KL{\overline{\spine_{h}}}{\B p}}\right]+\mathbb{P}\left(\|\hatmu-\mu\|_{1}> C_{1}^{-1}\ee\right)\\
			&=&\mathbb{E}\left[S\right]\sum_{i=0}^{l}\mathbb{E}\left[\sum_{\{u\in G\,:\,\mathcal{D}(u)=i\}}\mathbbm{1}_{\KL{\frac{i}{h}\overline{\mathcal{A}(u)}+(1-\frac{i}{h})\overline{\spine_{i}}}{\B p}+\ee>\KL{\overline{\spine_{h}}}{\B p}}\right]+\mathbb{P}\left(\|\hatmu-\mu\|_{1}> C_{1}^{-1}\ee\right),
\end{eqnarray*}
where $G$ is some Galton-Watson tree with birth distribution $\mu$ and we recall that $\spine_{i}=(S_{1},\ldots,S_{i})$ are the $i$ first elements of the spine.
Now, applying the many-to-one formula \eqref{eq:manytoone}, we get
	\begin{eqnarray*}
		&&\mathbb{E}\left[\sum_{\{u\in G\,:\,\mathcal{D}(u)=h\}}\mathbbm{1}_{\KL{\frac{i}{h}\overline{\mathcal{A}(u)}+(1-\frac{i}{h})\overline{\spine_{h-i}}}{p}+\ee>\KL{\bar{\spine}}{p}}\right]\\
		&=&  m^{i}\mathbb{P}\left(\KL{\frac{i}{h}\overline{X}+\left(1-\frac{i}{h}\right)\overline{\spine_{h-i}}}{p}+\ee>\KL{\overline{\spine_{h}}}{p}\right),
	\end{eqnarray*}
where $\overline{X}$ is the empirical distribution of an i.i.d. sample $X_1,\ldots,X_i$ with law given by $\B{\mu}$ independent of $\spine$.

Now, let $\rho>0$ be such that $\log\left(\mean{\mu}\right)-\AH{\B{\mu}}{\nu}+\rho<0$. Thus, according to Lemma \ref{lem:pertMultEstimate} and Theorem \ref{thm:optim}, we can choose $\delta>0$ and $\ee$ small enough such that $V(\alpha,\ee)\geq v(\alpha)-\rho$ and
	\[
	\mathbb{P}\left(\#\left(\widehat{\mathcal{S}}_{h}\cap \mathcal{S} \right)\leq l \right)\leq C_{\delta} \mathbb{E}[S]\sum_{i=0}^{l}m^{h-i}\exp\left(-h \left(v\left(\frac{i}{h}\right)-\rho\right)+h\delta\right)+\mathbb{P}\left(\|\hatmu-\mu\|_{1}> C_{1}^{-1}\ee\right),
	\]
for some constant $C_{\delta}$ provided by Lemma \ref{lem:pertMultEstimate}.
Now, let $\eta>0$, we have, setting $\mathcal{E}_{h}=h-\#\left(\widehat{\mathcal{S}}_{h}\cap \mathcal{S} \right)$,
	\begin{eqnarray*}
		\mathbb{P}\left(\frac{\mathcal{E}_{h}}{h}>\eta\right)&=&\mathbb{P}\left(\#\left(\widehat{\mathcal{S}}_{h}\cap \mathcal{S}\right)\leq \lfloor(1-\delta) h\rfloor\right)
		\\&\leq& C_{\delta}\mathbb{E}[S]\sum_{\alpha\in L_h}\exp\left(h\left((1-\alpha)\log(m)- v\left(\alpha\right)+\rho+\delta\right)\right)+\mathbb{P}\left(\|\hatmu-\mu\|_{1}> \ee\right),
	\end{eqnarray*}
with $L_h=\left\{\frac{i}{h}\mid0\leq i\leq \lfloor (1-\delta)h\rfloor\right\}.$ One can now easily control the probability by
	\begin{equation*}
		\mathbb{P}\left(\frac{\mathcal{E}_{h}}{h}>\eta\right)\leq C_{\delta}\mathbb{E}[S](h+1)\exp\left(h\sup_{\alpha\in [0,(1-\eta)]}\Big\{(1-\alpha)\log(m)- V\left(\alpha\right)+\rho+\delta\Big\}\right).
	\end{equation*}
Now, let us denote
	\[
	\alpha_\eta:=\argmax_{\alpha \in [0,(1-\eta)]}(1-\alpha)\log(m)- V\left(\alpha\right).
	\]
Since $(1-\alpha)\log(m)- V\left(\alpha\right)<0$ for all $\alpha\in [0,1)$, we have that
	\[
	\alpha_\eta\xrightarrow[\eta\to 0]{}1,
	\]
in virtue of the continuity of $V$. Now, according to Theorem \ref{thm:optim}, we have that
	\[
	(1-\alpha)\log(m)- v\left(\alpha\right)\sim_{\alpha \to 1}(1-\alpha)\left(\log(m)-\bah{p}{q}\right).
	\]
Thus, for a fixed $\kappa>0$ and for $\eta$ small enough, we have
	\[
	(1-\alpha_\eta)\log(m)-v(\alpha_\eta)\leq (1-\kappa)(1-\alpha_\eta)(\log(m)-\bah{p}{q}),
	\]
which gives
	\begin{equation*}
		\mathbb{P}\left(\frac{\mathcal{E}_{h}}{h}>\eta\right)\leq C_{\delta}\mathbb{E}[S](h+1)\exp\left(h\big\{(1+\kappa)(1-\alpha_\eta)(\log(m)-\bah{p}{q})+\rho+\delta\big\}\right).
	\end{equation*}
Thus, for $\rho$ and $\delta$ small enough, we have that $\mathbb{P}\left(\frac{\mathcal{E}}{h}>\eta\right)$ converges to zero exponentially fast, which entails that
	\[
	\frac{\mathcal{E}_{h}}{h}\xrightarrow[h\to\infty]{}0
	\]
almost surely and ends the proof.
\end{proof}


\section{Simulation study}
\label{s:num}

The numerical results presented in this section have been obtained using the \verb+Python+ library \verb+treex+ \cite{Azais2019} dedicated to tree simulation and analysis.

\subsection{Consistency of estimators}
\label{ss:illus}

This part is devoted to the illustration of the consistency result stated in Theorem~\ref{thm:mainThm} through numerical simulations. For each of 3 normal birth distributions $\mu$ (subcritical, critical, and supercritical), we have simulated $50$ spinal-structured trees until generation $h_{\max}+1$, with $h_{\max}=125$. The birth distribution of special nodes $\nu$ is obtained from $\mu$ and $f$ using \eqref{eq:biased:distrib}, where the only condition imposed to $f$ (in the critical and supercritical regimes) was that the convergence criterion $\mathcal{K}(\mu,\nu) = \log\,\mean{\mu}-\AH{\B{\mu}}{\nu}$ is negative. The values of the parameters selected for this simulation study are presented in Tab.~\ref{tab:parameters}.

\begin{table}[th]
\centering
\begin{tabular}{c||c|c|c||c|c|c||c|c|c}
& \multicolumn{3}{c||}{\textbf{Subcritical}}  &   \multicolumn{3}{c||}{\textbf{Critical}} &   \multicolumn{3}{c}{\textbf{Supercritical}} \\ \hline
$k$ & 0 & 1 & 2  & 0& 1 & 2 & 0 & 1 & 2 \\ \hline
$\mu(k)$ & 0.35 & 0.4 & 0.25 & 0.4 & 0.2 & 0.4 & 0.29 & 0.4 & 0.31 \\ \hline
$f(k)$ & 0 & 1 & 3 & 0 & 1 & 3 & 0 & 1 & 4 \\ \hline
$\mathcal{K}(\mu,\nu)$ & \multicolumn{3}{c||}{$-0.116$} &\multicolumn{3}{c||}{$-0.017$} & \multicolumn{3}{c}{$-0.006$} \\ \hline
\end{tabular}
\caption{Values of the parameters $\mu$ and $f$ selected for the simulation of the spinal-structured trees in the subcritical, critical, and supercritical regimes, as well as the associated convergence criterion $\mathcal{K}(\mu,\nu) = \log\,\mean{\mu}-\AH{\B{\mu}}{\nu}$.}
\label{tab:parameters}
\end{table}

For each of these trees, we have estimated the unknown model parameters for observation windows $h$ between $5$ and $h_{\max}$ with a step of $5$. The normal birth distribution is estimated twice: by the (biased) maximum likelihood estimator $\hatmu$ given in \eqref{eq:estmu1} and by the corrected estimator $\hatmu^\star$ defined in \eqref{eq:estmuf}. The transform function $f$ is estimated by $\hatf$ defined in \eqref{eq:estf}. Finally, the special birth distribution $\nu$ is estimated by,
$$\forall\,0\leq k\leq N,~\widehat{\nu}_h(k) \propto \hatf(k)\hatmu^\star(k).$$
For these 4 numerical parameters, we have computed the error in $L^1$-norm (since $f$ is identifiable only up to a multiplicative constant, both $f$ and $\hatf$ were normalized so that their sum is $1$ before). The spine is estimated by the Ugly Duckling $\hatspine$ defined in \eqref{eq:def:hatspine}. In this case, the estimation error is given by the proportion of special nodes not recovered by $\hatspine$.

\begin{figure}[th]
\centering
\begin{tabular}{cc}
\includegraphics[width=0.48\textwidth]{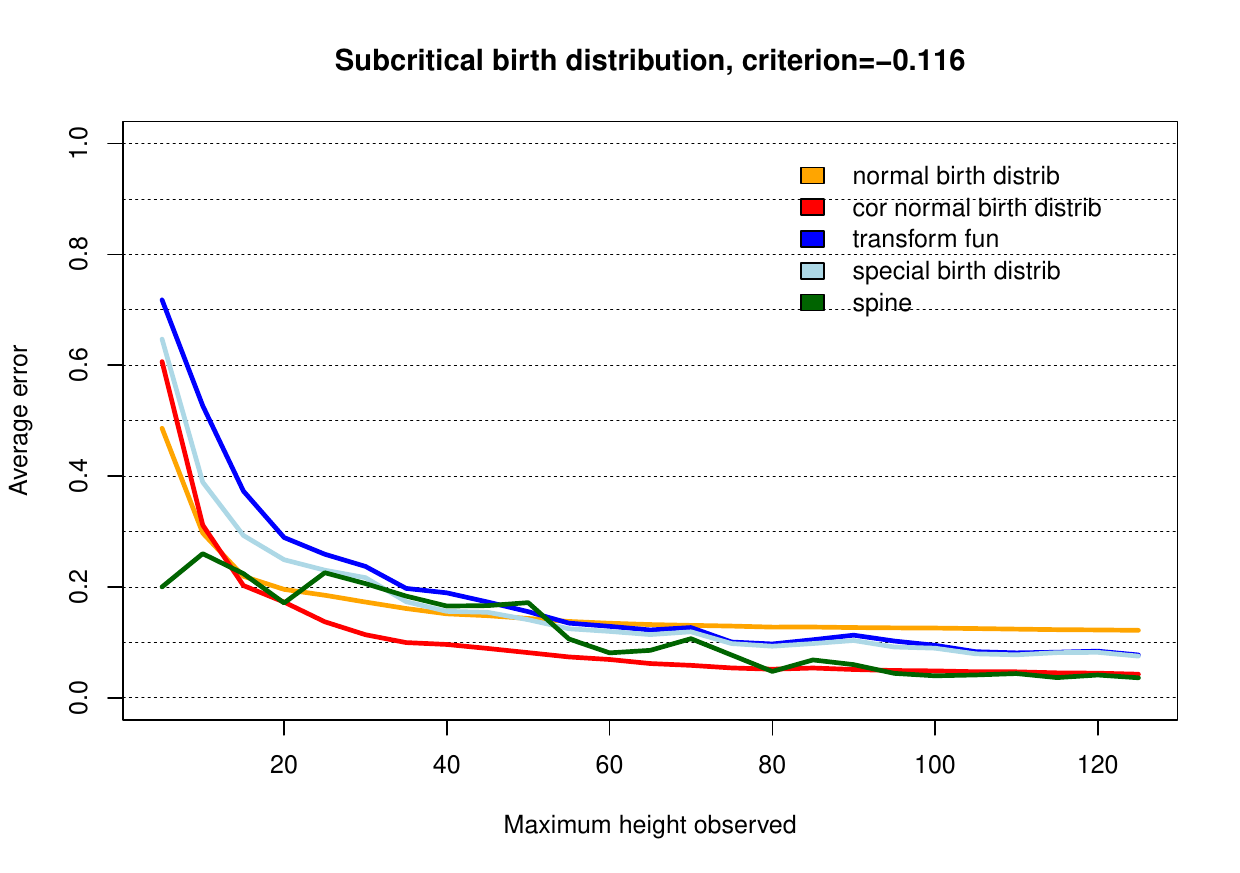}&
\includegraphics[width=0.48\textwidth]{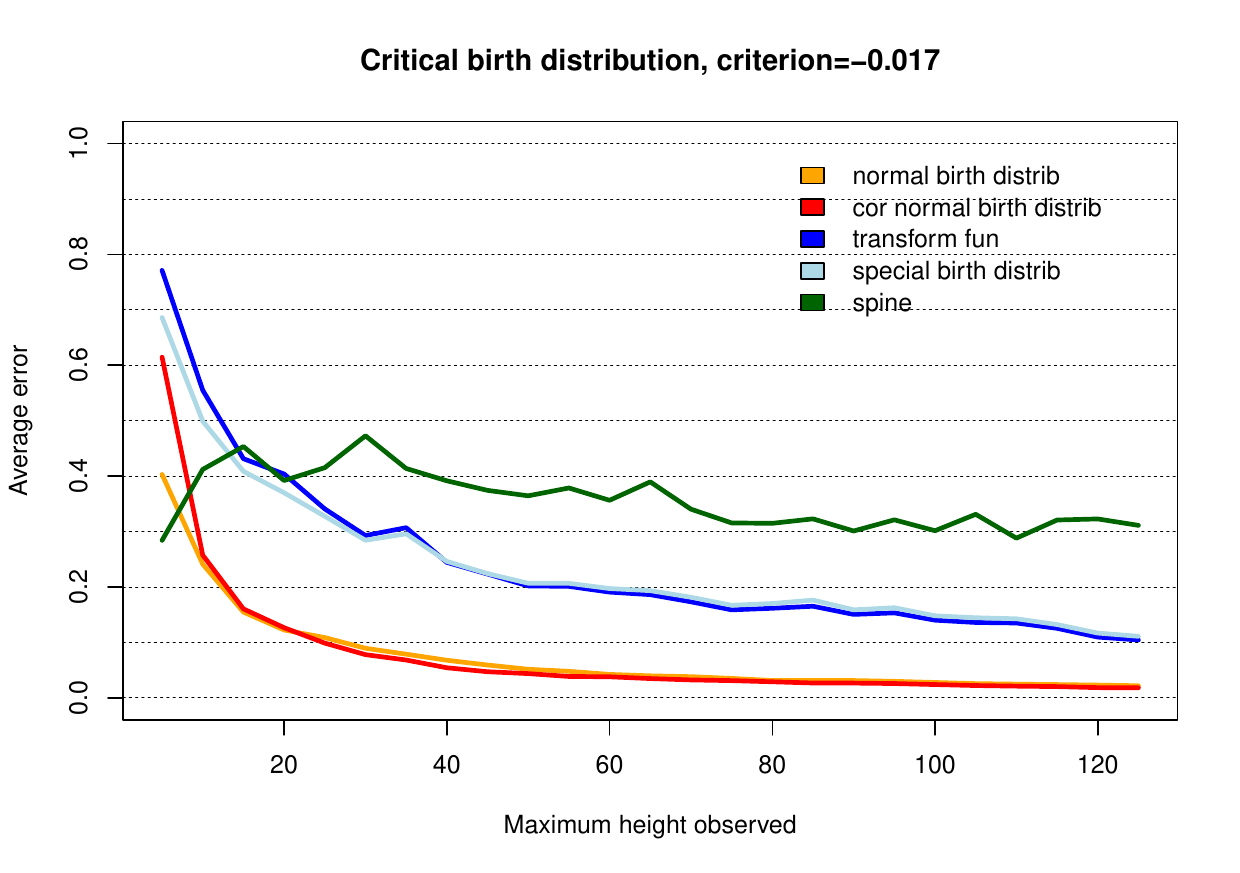}\\
\includegraphics[width=0.48\textwidth]{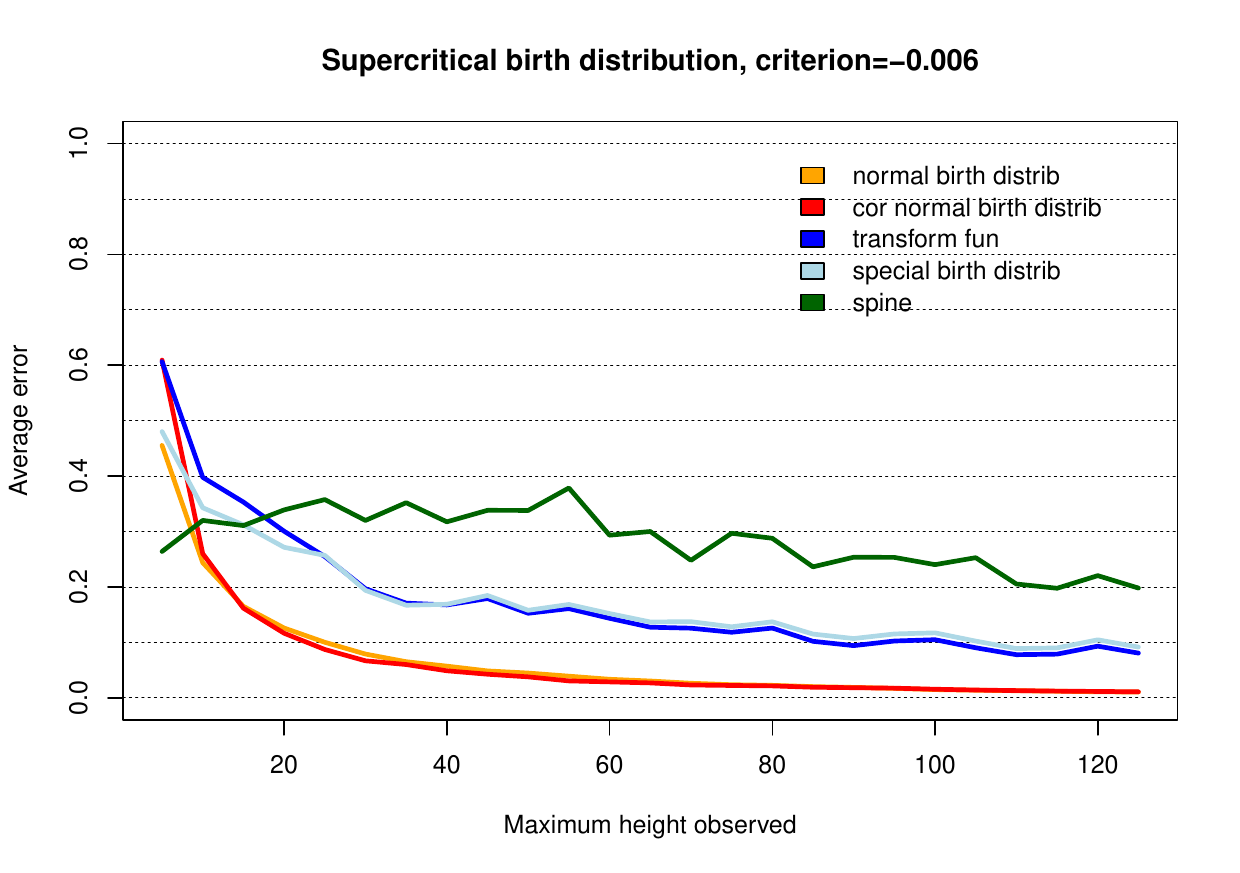}&
\end{tabular}
\caption{Average error as a function of the maximum height observed in the estimation of the unknown parameters (orange and red: $\mu$, blue: $f$, light blue: $\nu$, and green: $\mathcal{S}$) of a spinal-structured tree in the 3 growth regimes (top left: subcritical, top right: critical, and bottom left: supercritical). Parameter values can be read in Tab.~\ref{tab:parameters}.
}
\label{fig:num:res}
\end{figure}

The average errors computed for each of the $5$ estimators from varying observation windows $h$ are presented for the 3 growth regimes in Fig.~\ref{fig:num:res}. First of all, one can remark that the $5$ average errors tend to vanish when $h$ increases (even if it is with different shapes) whatever the growth regime (the convergence criterion is checked in the 3 examples). This illustrates the consistency of the estimators stated in Theorem~\ref{thm:mainThm}. However, additional pieces of information can be obtained from these simulations:
\begin{itemize}
\item It can be remarked that the correction of $\hatmu$ is useful only in the subcritical regime. In the two other regimes, one can indeed observe that the errors related to $\hatmu$ and $\hatmu^\star$ are almost superimposed. This is due to the fact that, in these growth regimes, the number of normal nodes is sufficiently large (compared to the number of special nodes) so that the bias of the maximum likelihood estimator vanishes.
\item The estimators of $f$ and $\nu$ are clearly less accurate than $\hatmu^\star$, in particular in the critical and supercritical regimes. A first but likely negligible reason is that $\hatf$ is computed from $\hatmu^\star$, which should only add an error to the one associated with the latter. Furthermore, the number of special nodes (used to estimate $\hatf$) is smaller than the number of normal nodes (used to estimate $\hatmu^\star$).
\item The estimator of the spine seems to converge, but slowly than the other estimates. However, we emphasize that, when $h$ increases, the number of unknown node types increases as well, contrary to $\mu$, $f$, and $\nu$, which dimension is fixed. It is thus expected to observe a slower convergence rate.
\end{itemize}


\subsection{Asymptotic test of conditioned Galton-Watson trees}
\label{ss:test}

When observing a population modeled by a Galton-Watson tree, it is of first importance to know whether it has been conditioned to survive or not, in particular when the birth distribution is subcritical. Here we show how the theoretical contributions of this paper can be used to develop an asymptotic test to answer this question.

We observe a subcritical tree $T$ until generation $h$ and would like to test the null hypothesis: $T$ is a Galton-Watson tree conditioned to survive until (at least) generation $h$. In the framework of spinal-structured trees and approximating conditioned Galton-Watson trees by Kesten's model \cite{kesten1986subdiffusive}, this is equivalent to test $f\propto\text{Id}$, which simplifies the construction of the test but also provides a further motivation for the class of models considered in this paper. As in Subsection~\ref{ss:illus}, we assume that $\sum_{i=0}^Nf(i)=1$.

\begin{figure}[t]
\centering
\begin{tabular}{cc}
\includegraphics[width=0.48\textwidth]{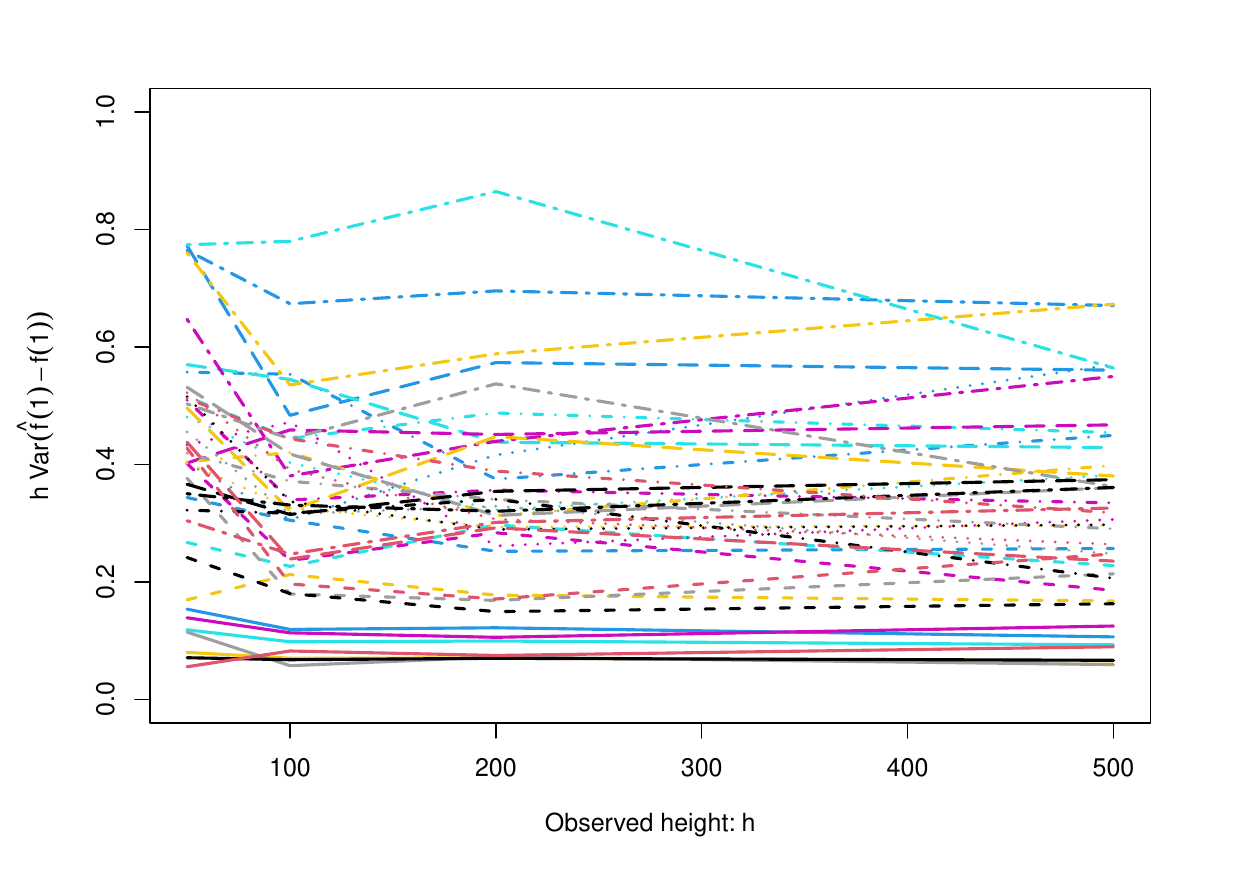} & \includegraphics[width=0.48\textwidth]{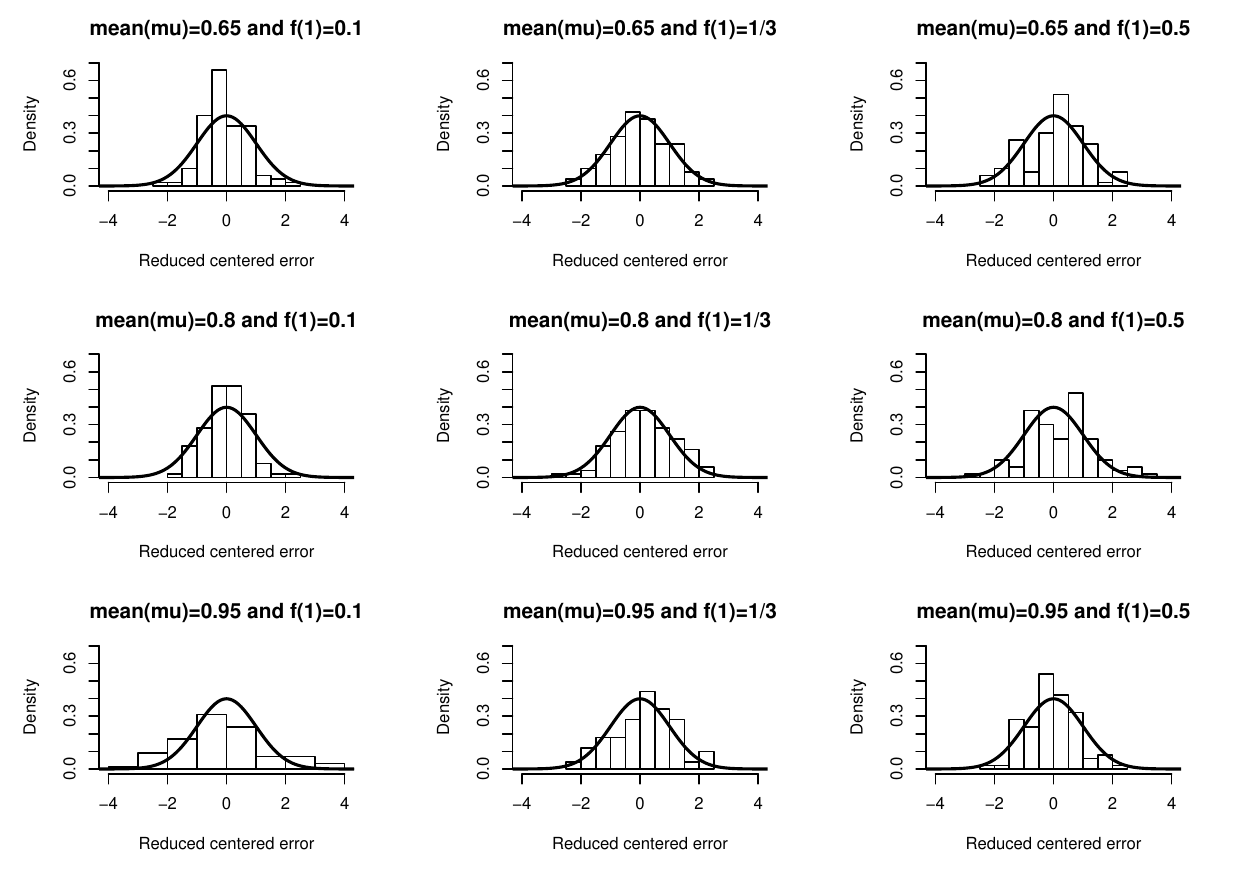}
\end{tabular}
\caption{Estimates of $h\,\mathbb{V}\text{ar}(\hatf(1)-f(1))$ from samples of $100$ spinal-structured trees simulated with various parameters $\mu$ and $f$ with $0.5<m(\mu)<1$ and $0<f(1)<0.5$ (left) and empirical distribution of the reduced centered error $\sqrt{h}(\hatf(1)-f(1))/\sigma(\hatmu,\hatf)$ for some of these parameters (right) with a comparison to the Gaussian distribution (thick black line).}
\label{fig:est:tcl}
\end{figure}

The construction of a test statistic for $f$ requires both a consistent estimator and some knowledge on its asymptotic behavior. The latter is sorely lacking but can be estimated from numerical simulations. In the sequel, we restrict ourselves to binary  ($f(1)$ is therefore sufficient to know $f$) spinal-structured trees with $0.5<m(\mu)<1$ and $0<f(1)<0.5$, i.e. $f$ is increasing because $f(1)+f(2)=1$. We suspect that $\hatf(1)$ satisfies a central limit theorem with rate $\sqrt{h}$. However we want to check if this rate seems to be adequate, then we need to estimate its asymptotic variance, possibly as a function of both $\mu$ and $f$. To this end, we have estimated $h\,\mathbb{V}\text{ar}(\hatf(1)-f(1))$ from simulated samples of spinal-structured trees from various values of $\mu$ and $f$ within the range specified above: the results are presented in Fig.~\ref{fig:est:tcl} (left). First, we observe that $h\,\mathbb{V}\text{ar}(\hatf(1)-f(1))$ seems to be constant in $h$ whatever the value of the two parameters, which validates the rate $\sqrt{h}$. In addition, the asymptotic variance clearly depends on the parameters, but can be accurately predicted from them by a linear regression,
$$ \sigma^2(\mu,f) = 0.4611141 - 0.5561625\times m(\mu) + 1.0688165\times f(1).$$
We display in Fig.~\ref{fig:est:tcl} (right) the distribution of $\sqrt{h}(\hatf(1)-f(1))/\sigma(\hatmu,\hatf)$, which is very close to the Gaussian distribution, as expected.

\begin{figure}[t]
\centering
\begin{tabular}{c|c}
Kesten's tree & Population model with competition \\
& \\
\includegraphics[width=0.48\textwidth]{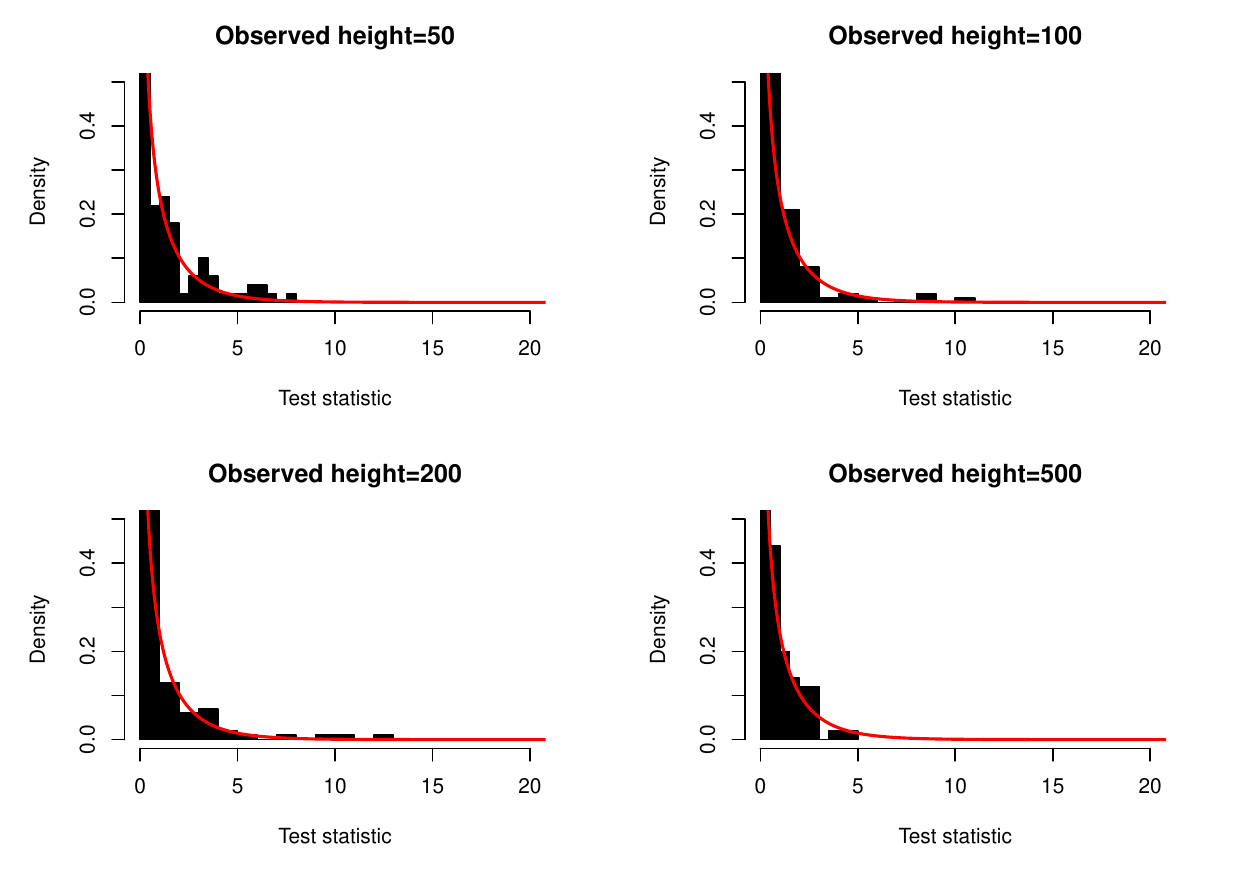} & \includegraphics[width=0.48\textwidth]{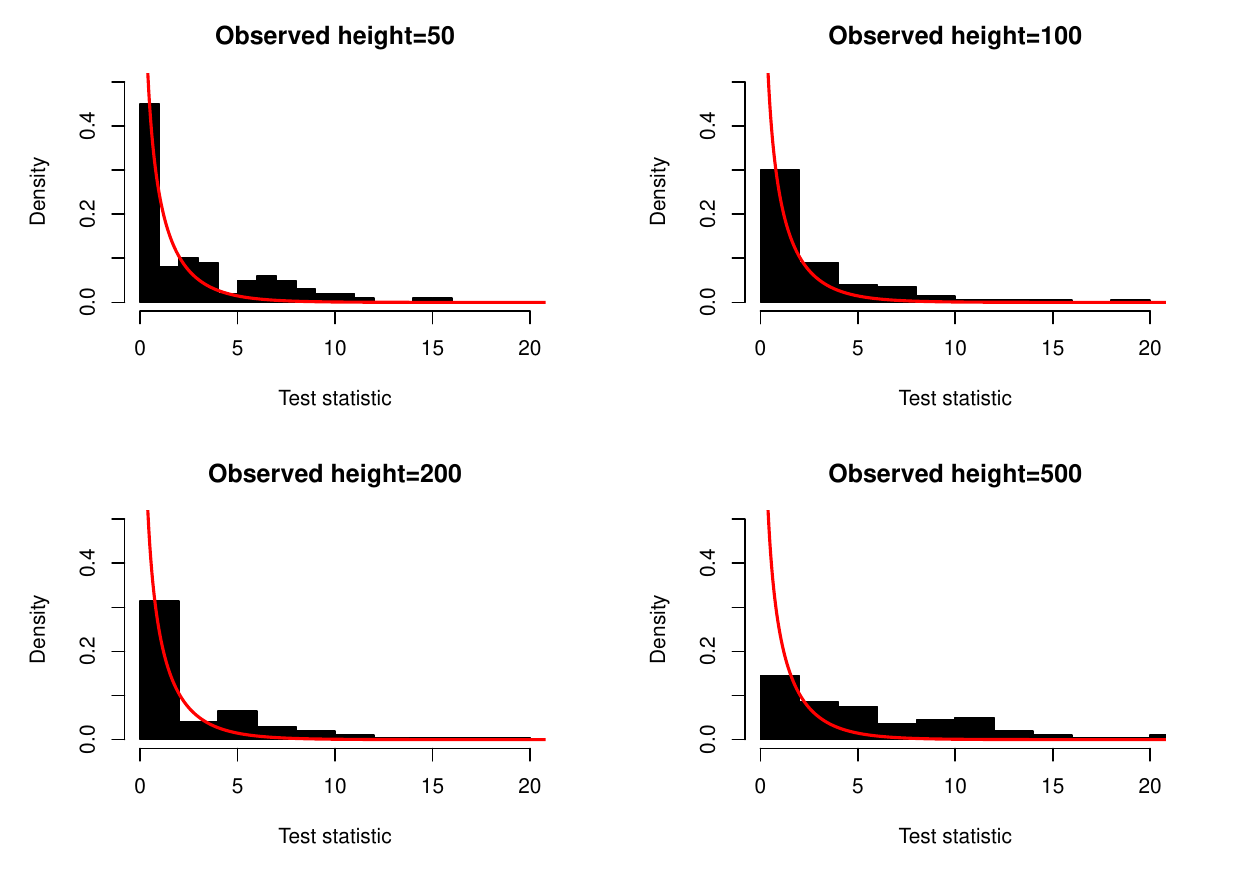}
\end{tabular}
\caption{Empirical distribution of the test statistic $Q_h$ obtained from samples of $100$ Kesten's trees with normal birth distribution $(0.55,0.2,0.25)$ (left) and from samples of $100$ Galton-Watson trees with competition (right), both with a comparison to the $\chi^2(1)$ distribution (red line).}
\label{fig:testH1}
\end{figure}

Relying on this short simulation study and recalling that $f(1)=1/3$ in Kesten's model ($f\propto\text{Id}$ and $f(1)+f(2)=1$), we introduce the test statistic
$$Q_h = \frac{h(\hatf(1)-1/3)^2}{\sigma^2(\hatmu,\hatf)},$$
which approximately follows a $\chi^2(1)$ distribution when the underlying tree is sampled according to Kesten's model. Denoting $\mathbf{q}(x) = \mathbb{P}(\chi^2(1)>x)$, one rejects the hypothesis $f\propto\text{Id}$ with confidence level $1-\alpha$ when $Q_h>\mathbf{q}(1-\alpha)$. Fig.~\ref{fig:testH1} (left) illustrates the behavior of $Q_h$ when the tested hypothesis is true. Furthermore, Tab.~\ref{tab:testH0} shows that the test rejects the null hypothesis with approximately the expected frequency of error $\alpha$.

To go further, the behavior under the alternative hypothesis needs to be investigated. That is why we propose to apply the test to a population that does not follow Kesten's model. For this purpose, we consider a Galton-Watson model with competition: for any $k\geq0$, each node of the $k^\text{th}$ generation gives birth, with distribution $\mu_s$ depending on its size $s$, to a random number of children,
$$\mu_{s} = \left( \frac{1}{4} \left(1-\frac{1}{s}\right), \frac{3}{4}\left(1-\frac{1}{s}\right) , \frac{1}{s}\right).$$
It should be noted that $m(\mu_{s}) = \frac{3}{4} +\frac{5}{4s}$, which makes the population growth supercritical when $s\leq 4$, critical when $s=5$, and subcritical when $s\geq6$. Oscillating between exponential growth and decay, the population is likely to avoid extinction, without fitting to the behavior of a Galton-Watson tree conditioned on surviving. Fig.~\ref{fig:testH1} (right) provides the distribution of the test statistic $Q_h$ under this model: it is significantly different from the $\chi^2(1)$ distribution expected under the null hypothesis, even from small populations, and thus differentiates from conditioned Galton-Watson trees.

\begin{table}[th]
\centering
\begin{tabular}{c||c||c|c|c|c}
$\alpha$ & $\textbf{q}(1-\alpha)$ &  \multicolumn{4}{c}{\textbf{Empirical rejection rate}} \\
& & $h=50$ & $h=100$ & $h=200$ & $h=500$ \\ \hline
$10\%$ & $2.71$ & 19\% & 8\%& 15\%& 5\%\\ 
$5\%$ & $3.84$ & 10\% & 6\%& 9\%& 2\%\\ 
$1\%$ & $6.63$ & 2\% & 3\%& 4\%& 0\%\\ \hline
\end{tabular}
\caption{Empirical rejection rate measured when testing the null hypothesis $f\propto\text{Id}$ from samples of $100$ Kesten's trees with normal birth distribution $(0.55,0.2,0.25)$ observed until generation $h$ for various levels of confidence $1-\alpha$.}
\label{tab:testH0}
\end{table}

\appendix

\section{Proofs of intermediate lemmas}
\label{app:lem}
\subsection{Proof of Lemma~\ref{lem:nozero}} \label{app:proof:nozero}

Let $(x,y,z)\in\feas(\alpha,\ee)$. The proof is based on the fact that if $x$, $y$ or $z$ has some null coordinate, we may always perturbed these points in order to decrease the objective function while still remaining in $\feas(\alpha,\ee)$. In a sake of simplicity, we assume along the proof that $x_{i}>0$, $z_{i}>0$, and $y_{i}>0$, as soon as $i\neq0$. The other cases can be treated similarly and are left to the reader. The harder case is when we have $x_{0}=z_{0}=0$ and $y_{0}>0$, but, to give an example, we first treat the case where $x_{0}=0$, $z_{0}>0$ and $y_{0}>0$. In the sequel of the proof, for any $X\in\mathbb{R}^{\pmax+1}$, we denote by $I(X)$ the set given by
	\[
	I(X)=\left\{i\in\llbracket0,\pmax\rrbracket \mid X_{i}=0\right\},
	\]
	and
	$\nabla^{+}\KL{x}{p}$ the vector given by
	\[
	\nabla^{+}\KL{x}{p}=
	\left\{
	\begin{array}{lll}
	1+\log(x_i/p_i)& \text{for all} & i\in I^{c}(x),\\
	0& \text{for all}&i\in I(x),
	\end{array}\right.
	\]
which is the vector of directional derivatives of $\KL{\cdot\,}{p}$ in the directions where they are well-defined.
	
	\medskip
	
	\noindent
	{\bf Case 1: $x_{0}=0$, $z_{0}>0$ and $y_{0}>0$}
	
	\medskip
	
\noindent
We first show that, whenever $h\in \mathbb{R}^{\pmax+1}$ satisfies $h_{0}>0$, we have
	\begin{equation}
		\label{eq:nozeroeq1}
		\limsup_{\delta \to 0}\frac{f_{\alpha}(x+\delta h,y,z)-f_{\alpha}(x,y,z)}{\delta}<0.
	\end{equation}
For any $i\in I(x)$ and any $\delta>0$,
	\[
	f_{\alpha}(x+\delta \mathbf{e}_{i},y,z)-f_{\alpha}(x,y,z)=(1-\alpha)\delta \log\left(\frac{\delta}{p_{i}} \right),
	\]
where $\mathbf{e}_{i}$ the $i$th vector of the canonical basis of $\mathbb{R}^{\pmax+1}$.
This easily entails that
	\begin{equation}
	\label{eq:nozeroeq1.5}
	\lim_{\delta \to 0}\frac{f_{\alpha}(x+\delta \mathbf{e}_{i},y,z)-f_{\alpha}(x,y,z)}{\delta}=-\infty.
	\end{equation}
Now, take $h\in \mathbb{R}^{\pmax+1}$ such that $h_{0}>0$. We have that $h=h^{I(x)}+h^{I^c(x)}$ where $h^{I(x)}$ is given by
	\[
	h^{I(x)}=\left\{
	\begin{array}{ll}
		h_{i} & \text{if}~i\in I(x),\\
		0 & \text{else,}
	\end{array}\right.
	\]
and $h^{I^c(x)}=h-h^{I(x)}$. Thus, because $f$ has well-defined directional derivatives in the direction of the positive coordinate, we have
	\begin{equation}
		\label{eq:nozeroeq2}
		f_{\alpha}(x+\delta h,y,z)=f_{\alpha}(x+\delta h^{I(x)},y,z)+\delta \nabla^{+}f_{\alpha}(x,y,z)\cdot h^{I^c(x)}+o(\delta).
	\end{equation} 
Thus, eq.\,\eqref{eq:nozeroeq2} implies
	\[
	\limsup_{\delta \to 0}\frac{f_{\alpha}(x+\delta h,y,z)-f_{\alpha}(x,y,z)}{\delta}= \limsup_{\delta \to 0}\frac{f_{\alpha}(x+\delta h^{I(x)},y,z)-f_{\alpha}(x,y,z)}{\delta}+\nabla^{+}f_{\alpha}(x,y,z)\cdot h^{I^c(x)},
	\]
and \eqref{eq:nozeroeq1} now follows from \eqref{eq:nozeroeq1.5}.
Now, let $i^{\ast}\in \llbracket 0,\pmax\rrbracket$ such that
	\[
	\log\left(\frac{(1-\alpha)x_{i^{\ast}}+\alpha z_{i^{\ast}}}{p_{i^{\ast}}} \right)=\min_{0\leq i\leq \pmax}\log\left(\frac{(1-\alpha)x_{i}+\alpha z_{i}}{p_{i}} \right).
	\]
Because $(1-\alpha)x+\alpha z\in\measure$ and $p\in\measure$, we must have
	\[
	\log\left(\frac{(1-\alpha)x_{i^{\ast}}+\alpha z_{i^{\ast}}}{p_{i^{\ast}}} \right)<0.
	\]
Thus, taking $h= e_{0}-e_{i^{\ast}}$, we get
	\[
H_{\alpha,\ee}(x+\delta h,y,z)-H_{\alpha,\ee}(x,y,z)=\delta(1-\alpha)\left(\log\left(\frac{\alpha z_{0}}{p_{0}} \right)-\log\left(\frac{(1-\alpha)x_{i^{\ast}}+\alpha z_{i^{\ast}}}{p_{i^{\ast}}} \right)\right)+o(\delta).
	\]
For $\delta$ small enough, we obtain
	\[
H_{\alpha,\ee}(x+\delta h,y,z)\geq H_{\alpha,\ee}(x,y,z),
	\]
which implies that $(x+\delta h,y,z)\in\feas(\alpha,\ee)$. In addition, eq.\,\eqref{eq:nozeroeq1} implies $f(x+\delta h,y,z)<f(x,y,z)$ for $\delta$ small enough. Thus $(x,y,z)$ can not be a solution of problem \eqref{eq:optimproblem}.
	
\medskip
	
\noindent
{\bf Case 2: $x_{0}=z_{0}=0$ and $y_{0}>0$}
	
\medskip

\noindent		
This particular case raises a new difficulty. Informally, in such situation a perturbation of type $(x+\delta h,y,z+\delta \tilde{h})$ gives
	\[
	H_{\alpha,\ee}(x+\delta h,y,z+\delta \tilde{h})=H_{\alpha,\ee}(x,y,z)+\delta\log(\delta)+o(\delta\log(\delta)).
	\]
It follows that $H$ is decreasing in any direction of type $\delta(h,0,\tilde{h})$ and $(x+\delta h,y,z+\delta \tilde{h})$ may not be in $\feas(\alpha,\ee)$.
To overcome this problem, we consider perturbed points of the form
	\[
	\begin{cases}
		x^{\delta}=x+\delta \mathbf{e}_{0}-\delta \mathbf{e}_{i}+\delta\log(\delta)r,\\
		z^{\delta}=z+\delta \mathbf{e}_{0}-\delta \mathbf{e}_{i},
	\end{cases}
	\]
where $r\in\mathbb{R}^{\pmax+1}$ satisfies
	\begin{equation}
		\label{eq:Rcond}
		\begin{cases}
			r_{0}=0,\\
			\sum_{i=0}^{\pmax}r_{i}=0.
		\end{cases}
	\end{equation}
Thus, for $\delta$ small enough, it is granted that $z^{\delta}\in\measure$ and $x^{\delta}\in\measure$. Because, $x_{i}>0$ and $z_{i}>0$ for all $i>0$, we have that
	\begin{equation*}
		\KL{x^{\delta}}{p}=\KL{x}{p}+\delta\log(\delta)+\delta\log(\delta)r\bigcdot\nabla^{+}\KL{x}{p}+o(\delta\log(\delta)),
	\end{equation*}
and
	\begin{equation*}
		\KL{z^{\delta}}{q}=\KL{z}{q}+\delta\log(\delta)+o(\delta\log(\delta)),
	\end{equation*}
which gives that
	\begin{equation}
		\label{eq:pertF}
		f_{\alpha}(x^{\delta},y,z^{\delta})=f(x,y,z)+\delta\log(\delta)\left(1+(1-\alpha)r\bigcdot\nabla^{+}\KL{x}{p}\right)+o(\delta\log(\delta)).
	\end{equation}
Similarly, we get that
	\begin{equation}
		\label{eq:pertG}
		H_{\alpha,\ee}(x^{\delta},y,z^{\delta})=H_{\alpha,\ee}(x,y,z)+\delta\log(\delta)\left(1+(1-\alpha)r\bigcdot\nabla^{+}\KL{(1-\alpha)x+\alpha z}{p}\right)+o(\delta\log(\delta)).
	\end{equation}
Our next step is to show that for some choice of $r$ and $\delta$ small enough, we have $f(x^{\delta},y,z^{\delta})\leq f(x,y,z)$ and $h(x^{\delta},y,z^{\delta})\geq h(x,y,z)$, which imply $(x^{\delta},y,z^{\delta})\in \feas(\alpha,\ee)$ and that $(x,y,z)$ is not a minimizer of $f$ among the feasible set $\feas(\alpha,\ee)$. To show this, in virtue of eqs.\,\eqref{eq:pertF} and \eqref{eq:pertG}, we only need to find some $r\in\mathbb{R}^{\pmax+1}$ satisfying conditions \eqref{eq:Rcond} and
	\begin{equation}
		\label{eq:Rcond2}
		\begin{cases}
			-r\bigcdot\nabla^{+}\KL{x}{p}\leq 1\\
			r\bigcdot\nabla^{+}\KL{(1-\alpha)x+\alpha z}{p}\leq -1,
		\end{cases}
	\end{equation}
in particular because $\delta\log(\delta)<0$ for $\delta$ small enough. According to Farkas' lemma, such $r$ exists as soon as there is no solution $(u,v,w)$ to the problem
	\begin{equation}
		\label{eq:fark}
		\begin{cases}
			-u\log\left(\cfrac{x_{i}}{p_{i}} \right)+v\log\left(\cfrac{(1-\alpha)x_{i}+\alpha z_{i}}{p_{i}} \right)+w=0,\quad \forall\,i>0,\\
			u-v<0,\\
			u>0,\ v>0,\ w>0.
		\end{cases}
	\end{equation}
Assume that $(u,v,w)$ is a solution to problem \eqref{eq:fark}. Thus, for all $i>0$, 
	\[
	x_{i}=e^{\frac{w}{u}}p_{i}\left(\cfrac{(1-\alpha)x_{i}+\alpha z_{i}}{p_{i}}\right)^{\frac{v}{u}}.
	\] 
Hence, according to Jensen's inequality and the conditions of problem \eqref{eq:fark},
	\[
	1=e^{\frac{w}{u}}\sum_{i=1}^{\pmax}p_{i}\left(\cfrac{(1-\alpha)x_{i}+\alpha z_{i}}{p_{i}}\right)^{\frac{v}{u}}\geq e^{\frac{w}{v}}>1,
	\]
which is absurd. Thus, problem \eqref{eq:fark} has no solution and Farkas' lemma entails that there exists $r\in \mathbb{R}^{\pmax+1}$ such that conditions \eqref{eq:Rcond} and \eqref{eq:Rcond2} are satisfied. 
	
The method is similar if we have more than one zero. This ends the proof.


\subsection{Proof of Lemma~\ref{lem:LipKL}} \label{app:proof:LipKL}
We begin the proof by showing that the Kullback-Leibler divergence $(q,p)\mapsto\KL{q}{p}$ is locally Lipschitz in the second variable away from $0$ and that this holds uniformly with respect to the first variable. We have
	\[
	\nabla_{2}\KL{q}{p}=\left(-\frac{q_{i}}{p_{i}} \right)_{1\leq i\leq \pmax},
	\]
where $\nabla_{2}$ denotes the gradient with respect to the second variable. Hence,
	\[
	\norm{\nabla_{2}\KL{q}{p}}_{1}\leq \frac{\pmax}{\m{p}}.
	\]
Given $0<\ee<\m{p}$, as soon as $\|p-\hat{p}\|_{1}<\ee$, we have
	\[
	\sup_{\{\hat{p}\in\measure\,|\,\|p-\hat{p}\|_{1}<\ee\}}\norm{\nabla_{2}\KL{q}{\hat{p}}}_{1}\leq \frac{N}{\m{p}-\ee},
	\] 
which entails that
	\begin{equation*} 
		\left|\KL{q}{p}-\KL{q}{\hat{p}}\right|\leq \frac{N}{\m{p}-\ee}\|p-\hat{p}\|_1,\quad\forall\,\hat{p}\in\measure~\text{s.t.}~\|p-\hat{p}\|_{1}<\ee.
	\end{equation*}
To go further, we need to investigate the effect of a perturbation of $p$ on $\B{p}$, where the operator $\B$ was defined in eq.\,\eqref{eq:biasOperator}.
Now, one can easily see that, on the open set $\{p\in\measure\mid\mean{p}>1/2\}$, we have
	\[
	\|\nabla\B{p}\|_{1}\leq 2\pmax.
	\]
As $\mean{p}\geq1$, there exists $\ee>0$, such that $\mean{\hat{p}}>1/2$ for all $\hat{p}\in\ball{p}{\ee}$. Thus, for $\hat{p}\in \measure\cap\ball{p}{\ee}$, we have
	\[
	\|\B{\hat{p}}-\B{p}\|_{1}\leq 2N\|p-\hat{p}\|_{1},
	\]
which ends the proof.


\subsection{Proof of Lemma~\ref{lem:pertMultEstimate}} \label{app:proof:pertMultEstimate}
First, we have
\begin{eqnarray}
&&\mathbb{P}\left(\KL{M+S}{p}>\KL{R+S}{p}\right) \nonumber \\
&=&\sum_{r_{1}+\ldots+r_{\pmax}=h-n}\sum_{m_{1}+\ldots+m_{\pmax}=h-n}\sum_{s_{1}+\ldots+s_{\pmax}=h}\!\!\!\left((h-n)!\right)^2h!\prod_{i=1}^{\pmax}\frac{p_{i}^{r_{i}}q_{i}^{m_{i}+s_{i}}}{r_{i}!m_{i}!s_{i}!}\mathbbm{1}_{\KL{\bar{r}}{p}+\ee>\KL{\bar{r}}{p}}. \label{eq:mult}
\end{eqnarray}
In addition, one can easily check that
\begin{eqnarray*}
&&\prod_{i=1}^{\pmax}p_{i}^{r_{i}}q_{i}^{m_{i}+s_{i}} \\
&=&\exp\left(-(n-h)\KL{\bar{r}}{p}-(n-h)\KL{\bar{m}}{p}-h\KL{\bar{s}}{q}\right)\prod_{i=1}^{\pmax}\left(\cfrac{r_{i}}{n-h}\right)^{r_{i}}\left(\cfrac{m_{i}}{n-h}\right)^{m_{i}}\left(\cfrac{s_{i}}{h}\right)^{s_{i}}.
\end{eqnarray*}
In addition, since
\[
(h-n)!(h-n)! h!\prod_{i=1}^{\pmax}\left(r_{i}!m_{i}!s_{i}!\right)^{-1}\prod_{i=1}^{\pmax}\left(\cfrac{r_{i}}{n-h}\right)^{r_{i}}\left(\cfrac{m_{i}}{n-h}\right)^{m_{i}}\left(\cfrac{s_{i}}{h}\right)^{s_{i}}\leq 1,
\]
we obtain from eq.\,\eqref{eq:mult} that
\begin{eqnarray*}
&&\mathbb{P}\left(\KL{M+S}{p}>\KL{R+S}{p}\right)\\
&\leq&\sum_{\begin{subarray} ~r_{1}+\ldots+r_{\pmax}=h-n\\m_{1}+\ldots+m_{\pmax}=h-n\\s_{1}+\ldots+s_{\pmax}=h\end{subarray}}\exp\Big(-(n-h)\KL{\bar{r}}{p}-(n-h)\KL{\bar{m}}{p}-h\KL{\bar{s}}{q}\Big)\mathbbm{1}_{\KL{\bar{r}}{p}+\ee>\KL{\bar{r}}{p}},
\end{eqnarray*}
which leads to
\begin{equation*}
	\mathbb{P}\left(\KL{M+S}{p}>\KL{R+S}{p}\right)\leq\dbinom{h-n+\pmax-1}{\pmax-1}^{2}\dbinom{h+\pmax-1}{\pmax-1}\exp\left(-hV(\alpha,\ee)\right),
\end{equation*}
where $V(\alpha,\ee)$ is defined in eq.\,\eqref{eq:valueF} 
and $\alpha=h/n$.
Now, as
\[
\dbinom{h+N-1}{N-1}\leq e^{N-1}\left(\cfrac{h+N-1}{N-1}\right)^{N-1}\leq C_{N}h^{N}
\]
for some constant $C_{N}$, we get
\begin{equation*}
	\mathbb{P}\left(\KL{M+S}{p}>\KL{R+S}{p}\right)\leq C_{N}^{3}h^{3N}\exp\left(-hV(\alpha,\ee)\right).
\end{equation*}
For any $\delta>0$, one can always find some constant (independent of $h$ and $V(\alpha,\ee)$) such that
\[
C_{N}^{3}h^{3N}\exp\left(-hV(\alpha,\ee)\right)\leq C \exp\left(-h(V(\alpha,\ee)-\delta)\right).
\]
This ends the proof.


\acks %
The authors would like to thank the anonymous reviewers for their valuable comments that helped them to considerably improve the preliminary version of the paper. In particular, one of the reviewers pointed out an error in the proof of Theorem~\ref{thm:optim}, which we corrected by following his/her suggestions.

\fund %
There are no funding bodies to thank relating to the creation of this article.

\competing %
There were no competing interests to declare which arose during the preparation or publication process of this article.

\bibliographystyle{acm}
\bibliography{biblio}

\end{document}